\theoremstyle{definition}
\newtheorem{defn}{Definition}[section]
\newtheorem{definition}[defn]{Definition}
\theoremstyle{plain}
\newtheorem{theorem}[defn]{Theorem}
\newtheorem{proposition}[defn]{Proposition}
\newtheorem{corollary}[defn]{Corollary}
\newtheorem{lemma}[defn]{Lemma}
\newtheorem{cor}[defn]{Corollary}
\newtheorem{lem}[defn]{Lemma}
\newtheorem{prop}[defn]{Proposition}
\newtheorem{thm}[defn]{Theorem}
\theoremstyle{remark}
\newtheorem{remark}{Remark}[section]
\newtheorem{examples}{Examples}[section]
\newtheorem{example}{Example}[section]
\DeclareMathOperator{\diam}{diam}
\newcommand{\set}[1]{\left\{ #1 \right\}}
\newcommand{\paren}[1]{\left( #1 \right)}
\newcommand{\gen}[1]{\left\langle #1 \right\rangle}
\newcommand{\norm}[1]{\left\| #1 \right\|}
\newcommand{\abs}[1]{\left| #1 \right|}
\newcommand{\N}{\mathbb N}
\newcommand{\R}{\mathbb R}
\newcommand{\A}{\mathcal{A}}
\newcommand{\round}[1]{{\ooalign{\hfil\raise .10ex\hbox{\scriptsize#1}\hfil\crcr\mathhexbox20D}}}
\newcommand{\cell}{\operatorname{G}}
\newcommand{\B}{\mathcal{B}}
\newcommand{\eng}{\mathcal{E}}
\newcommand{\Dom}{\operatorname{Dom}}
\newcommand{\dom}{\operatorname{Dom}}
\newcommand{\maxim}{\mathcal{M}}
\newcommand{\SG}{\operatorname{SG}}
\newcommand{\edge}{\mathsf{E}}
\newcommand{\spn}{\operatorname{span}}
\newcommand{\irt}{\mathcal{H}}
\newcommand{\diralg}{\mathcal{C}}
\newcommand{\z}{\mathbf z}
\newcommand{\var}{\operatorname{Var}}
\newcommand{\per}{\operatorname{Per}}
\newcommand{\h}{\mathfrak{h}}
\newcommand{\graph}{\mathsf{G}}
\newcommand{\gmet}{\graph^{\text{met}}}
\newcommand{\cdc}{(CdC)\ }
\newcommand{\BE}{(BE)\ }
\newcommand{\PI}{(PI)\ }
\newcommand{\VD}{(VD)\ }
\newcommand{\VLB}{(VLB)\ }
\newcommand{\LYI}{(LYE)\ }
\newcommand{\red}[1]{\textcolor{red}{#1}}
\title{Differential one-forms on Dirichlet spaces and Bakry-\'Emery estimates on metric graphs}
\author{Fabrice Baudoin%
  \thanks{Author supported in part by NSF Grant DMS 1660031}}
\affil{Department of Mathematics, University of Connecticut}
\author{Daniel J. Kelleher}
\affil{Department of Mathematics, Purdue University}
\begin{document}
\maketitle
\begin{abstract}
 We develop a general framework on Dirichlet spaces to prove a weak form of the Bakry--\'Emery estimate and study its consequences. This estimate may be satisfied in situations, like metric graphs, where generalized notions of Ricci curvature lower bounds are not available.
 \end{abstract}

%
\tableofcontents
\section{Introduction}

In the last few years, there has been much work toward defining curvature bounds for general metric measure  spaces (see  \cite{Amb, LV,St1,St2}). In this work, we are interested in mostly one-dimensional Dirichlet spaces, like metric graphs,  which are spaces for which good notions of curvature have been elusive so far. Despite the lack of good  \textit{curvature bounds} on those spaces, we prove that metric graphs with finite number of edges satisfy a weak form of the Bakry-\'Emery estimate:
\begin{align}\label{BE-intro}
\sqrt{\Gamma(e^{t \Delta} f)} \le C_1 e^{t \Delta} \sqrt{\Gamma(f)}, \quad 0 \le t \le 1,
\end{align}
where $\Delta$ is the generator of a Dirichlet form and $\Gamma$ the associated carr\'e du champ defined by
\[
\Gamma(f,g) = \frac12(\Delta(fg) - f\Delta g - g\Delta f).
\]
The equality defining $\Gamma$  is usually understood in the weak sense of \cite[Proposition 4.1.3]{BH91}. As with all bilinear operators we will  denote $\Gamma(f) = \Gamma(f,f)$. 

We show that for metric graphs the optimal $C_1$ in the inequality \eqref{BE-intro} is bounded from below by $(\max \deg v -1)$, where the maximum is taken over the set of vertices of the graph. Therefore, for non trivial graphs,  $C_1 >1$. Weak Bakry--\'Emery estimates of the type \eqref{BE-intro} with $C_1>1$, have already been met in the literature and are known to be satisfied in the H-type Heisenberg groups (see \cite{MR2240167,BBBC,MR2557945}). It turns out that weak Bakry--\'Emery estimates are actually sufficient to recover  several key consequences of the classical one (which corresponds to $C_1=1$), see \cite{BBBC} for heat kernel functional inequalities,  \cite{MR2320383} for the Hardy-Littlewood-Sobolev theory and \cite{Kuw10} for Wasserstein spaces Lipschitz continuity properties. Therefore, it is interesting to develop a general framework to understand them. 

\

In classical situations, the Bakry-\'Emery estimate with $C_1=1$ is proved as a consequence of a lower bound on the Bakry's $\Gamma_2$ operator
\[
\Gamma_2(f,g)=\frac{1}{2} ( \Delta \Gamma (f,g)-\Gamma(f,\Delta g)-\Gamma(g,\Delta f)).
\]
However, in a general framework, the  $\Gamma_2$ operator may not even be defined in a strong sense, since $\Gamma(f,g)$ fails to be in the domain of $\Delta$ for a reasonable class $\mathcal{C}$ of $f,g \in \mathcal{C}$. More recently, it has been proved in   \cite{AGS14} and \cite{AGS15}, that under mild conditions,  the Bakry-\'Emery estimate with $C_1=1$ is actually equivalent to the underlying metric space satisfying a Riemannian Ricci curvature lower bound in the sense of \cite{Amb}.

In singular spaces, where no generalized Ricci curvature lower bounds are satisfied and no $\Gamma_2$-calculus is available, to prove the weak Bakry-\'Emery estimate, it seems fruitful to take advantage of and further develop the theory of measurable one-forms on Dirichlet spaces that was originally devised in   \cite{CS03,Hin10, IRT12,HRT,HKT13}. A main observation is the intertwining  property
\begin{align}\label{PT-intro}
 \partial e^{t \Delta}=e^{t \vec{\Delta}}  \partial,
\end{align}
where $\partial$ is the exterior derivative and $e^{t \vec{\Delta}}$  a semigroup on \textit{one-forms}. Proving the semigroup domination
\begin{align}\label{PTR-intro}
\| e^{t \vec{\Delta}} \eta \| \le C_1 e^{t \Delta} \| \eta \|, \quad 0 \le t \le 1.
\end{align}
therefore implies \eqref{BE-intro}.

\

The paper is organized as follows. In Section 2, we present the necessary preliminaries about the space of measurable one-forms on a Dirichlet space and prove the intertwining \eqref{PT-intro}. As a consequence, we prove that the weak Bakry-\'Emery estimate is satisfied in large times, with exponential decay, in a general class of compact Dirichlet spaces. Admittedly, this is mainly a spectral effect. \textit{Curvature} is what controls the estimate in small times. 

\

In Section 3, we explore consequences of \eqref{BE-intro} that go beyond the usual applications. We are mostly  interested in the space of bounded variation functions and isoperimetric type inequalities. Bounded variation functions in Dirichlet spaces may be defined by adapting the original ideas of De Giorgi \cite{DG}. More precisely, one defines 
\[
\var f = \sup\set{ \gen{f,\partial^*  \eta }_2~|~  \eta  \in \dom \partial^*,  \| \eta \|   \leq 1}
\]
where $\partial^*$ is the adjoint of the exterior derivative (one may also  think of it as a generalized divergence in as distributional sense, see \cite{HRT} ).  If $f \in \dom \eng$, then $\var f$ is comparable to $\int \sqrt{\Gamma (f)} d\mu$, but in geometric measure theory, one is typically interested in situations where $f \notin \dom \eng$. With this definition in hand and the semigroup domination \eqref{PTR-intro}, we obtain Sobolev embeddings of the type
\begin{align*}
\| f \|_p \le C_Q (\var f +\|f\|_1)  ,
\end{align*}
where $p=\frac{Q}{Q-1}$ and $Q$ is the semigroup dimension (in the sense of Varopoulos). The corresponding isoperimetric inequality writes
\[
\mu(E)^{\frac{Q-1}{Q}} \le C_{\emph{iso}} P (E).
\]
where $P(E):=\var 1_E$ is the perimeter of a Caccioppoli set. We also prove generalizations of the Buser's and Ledoux's isoperimetric  inequalities. The main work is to adapt to our framework some ideas originally due to Ledoux \cite{BGL,Le, Le2}.

\

In Section 4, we specialize our study to the case of Hino index one Dirichlet spaces. In those spaces, one forms may be identified with functions. As a consequence  the semigroup domination \eqref{PTR-intro} is equivalent to a semigroup domination on functions:
\[
| e^{t \Delta^\perp} f | \le C e^{t \Delta} |f|
\]
where $\Delta^\perp$ is a self-adjoint operator (in general non-Markovian), that we call Poincar\'e dual of $\Delta$. For instance, if $\Delta$ is the Laplace operator on an interval with Neumann boundary conditions, then $\Delta^\perp$ is the Laplace operator with Dirichlet boundary conditions. We end the section with the study of the situation where the reference measure comes from a harmonic form. In this special situation, the Dirichlet space associated to $\Delta^\perp$ is an extension of the original Dirichlet space and the Bakry-\'Emery estimate with constant $C_1=1$ is satisfied for a suitable class of functions $f$. 

\

In the last Section 5, we study in detail a class of examples to which the previous results apply. We first show that on the Walsh spider with $N$ legs, one has

\begin{align*}
\sqrt{\Gamma(e^{t \Delta} f)} \le (N-1) e^{t \Delta} \sqrt{\Gamma(f)}, \quad t \ge 0.
\end{align*}

where the constant $N-1$ is optimal. The weak Bakry-\'Emery estimate is then generalized to any metric graph (with standard boundary conditions at the vertices) that has a finite number of edges. Some consequences are then explored.

\

\textbf{Acknowledgements}: The authors thank an anonymous referee for multiple remarks leading to a better presentation of this work and Alexander Teplyaev and Patricia Alonso Ruiz for stimulating discussions and pointing out a mistake in one of the proofs of an earlier version. 

\section{Differential one-forms on Dirichlet spaces}
\label{sec:SG}

\subsection{Preliminaries}

This section mostly establishes definitions and collects known results from the theory of differential forms on Dirichlet spaces as can be found in \cite{IRT12,HRT,HKT13}. These works built on ideas found in \cite{CS03}.   

\

Let $(\eng,\dom\eng)$ be a  symmetric strictly local regular  Dirichlet form on $L^2(X,\mu)$ with self-adjoint generator $\Delta$,  where $(X,d)$ is a locally compact separable metric space, and $\mu$ is a locally finite Borel measure such that $\mu (U) >0$ when $U$ is a non empty open set.  Throughout the paper it is always assumed that $\eng$ admits a carr\'e du champ (see  \cite{BH91}) which shall be denoted by $\Gamma$.  We adopt the convention that $\Delta $ is a negative operator in the sense that for $u \in \dom \Delta$,
\[
\eng(u,u)=-\int u \Delta u d\mu
\]


\begin{example}\label{example1}
If $\mathbb{M}$ is a smooth complete Riemannian manifold and 
\[
\mathcal{E}(f,g)=\int_\mathbb{M} \langle df , dg \rangle_{T^* \mathbb{M}} d\mu, \quad f,g \in W^{1,2}(\mathbb{M})
\]
where $\mu$ is the Riemannian volume measure, then $\Gamma (f,g)= \langle df , dg \rangle_{T^* \mathbb{M}}$ and the restriction of $\Delta$ to smooth functions coincides with the Laplace-Beltrami operator.
\end{example}
 
Let $\B_b(X)$ be defined to be the set of bounded Borel measurable functions on $X$, and $\diralg:= C_b(X)\cap \dom\eng$. By regularity, $\diralg$ is dense in $\dom \eng$. For simple tensors in the vector space $\diralg \otimes \B_b(X)$, define the scalar product
\[
\gen{ f_1\otimes g_1,f_2\otimes g_2}_\irt =\int g_1g_2 \Gamma(f_1,f_2) d\mu.
\]
The above product is non-negative definite, and thus by factoring out the 0-seminorm elements and completing defines a Hilbert space which will be denoted by $\irt$. We think of $\irt$ as the space of $L^2$ differential one-forms $\irt$ on $X$ (as in \cite{CS03}).

\begin{example}
For the Dirichlet form in Example \ref{example1}, one has
\[
\gen{ f_1\otimes g_1,f_2\otimes g_2}_\irt =\int_\mathbb{M} g_1g_2 \langle df_1 , df_2 \rangle_{T^* \mathbb{M}} d\mu.
\]
and, up to isomorphism, $\irt$ is the Hilbert space of square integrable one-forms (see \cite{HRT}).
\end{example}

The space $\irt$ is a $\diralg$-left module structure and $\B_b(X)$-right module with multiplication defined to be 
\[
a\cdot f\otimes g = (af)\otimes g - a\otimes (fg)\quad\text{and}\quad f\otimes g \cdot a = f\otimes ag
\]
respectively. Since  $\eng$ is strictly local,  left and right  multiplications  actually coincide (see \cite[Lemma 3.2.5]{FOT11}). We define an exterior derivative $\partial: \diralg \to \irt$ by 
\[
\partial f = f\otimes \mathbf 1,
\]
where $\mathbf 1$ is the constant function equal to 1. From the definition 
\[
\gen{\partial f,\partial g}_\irt = \int \Gamma(f,g) d\mu = \eng(f,g),
\]
and hence $\partial$ is a closable operator because $\eng$ is a closed Dirichlet form. Therefore $\partial$ extends to a densely defined closed linear operator $L^2(X,\mu) \to \mathcal{H}$ with domain
\[
\dom\partial=\dom\eng.
\]

\begin{example}
In the case of Example \ref{example1}, the restriction of $\partial$ to smooth functions coincides with the usual derivative $d$.
\end{example}

Note that since $\eng$ is  assumed to be strictly local $\partial$ admits a chain rule (see \cite[Lemma 3.2.5]{FOT11}): If $F$ is continuously differentiable and  $f^1,f^2,\ldots, f^n \in \dom\eng$, then
\[
\partial F(f^1,f^2,\ldots,f^n) = \sum_{i=1}^n\frac{\partial F}{\partial x^i}\partial f^i.
\]

The co-differential is defined as the adjoint of the exterior differential $\partial$. More precisely, the operator $\partial^*$ is the densely defined operator from $\irt \to L^2(X,\mu)$ with domain
\[
\dom\partial^* := \set{\eta \in \irt ~:~\exists f\in L^2(X,\mu),~\text{with } \gen{\eta,\partial \phi}_\irt = \gen{f,\phi}_2~\forall~\phi\in \dom \eng },
\]
and we have $\partial^* \eta = f$. The operator $\partial^*$ may also be interpreted in a distributional sense (see \cite{HRT}).

\begin{example}
In the case of Example \ref{example1}, the restriction of $\partial^*$ to smooth forms coincides with the usual  divergence  $\delta$.
\end{example}

Observe that $u\in \dom\eng$ is in $\dom\Delta$ if and only if there exists $\Delta u \in L^2(X,\mu)$ such that  $\eng(u,\phi) = -\gen{\Delta u,\phi}$ for all  $\phi\in\dom\eng$. Hence 
\[
\dom\Delta = \set{u\in\dom\eng~|~ \partial u \in \dom\partial^*}.
\]
and for $u\in\dom\Delta$ we have $\partial^*\partial u = -\Delta u$.

\subsection{Laplacian on one-forms}

Define the 1-form Laplacian, as in \cite{HKT13} by $\vec\Delta =- \partial\partial^*$ with the domain
\[
\dom\vec\Delta = \set{\eta \in \irt~|~\partial^* \eta \in \dom\partial}.
\]
Since $\partial^*$ is densely defined and closed, from a Von Neumann's theorem (confer \cite[theorem 8.4]{Tay} or the proof of theorem VIII.32 in \cite{RS72}),  the operator $\vec\Delta=- (\partial^*)^*\partial^*$ is self-adjoint. Alternatively, since $\partial^*$ is closed, we may also see $\vec\Delta$  as the self-adjoint generator of the closed symmetric bilinear form on $\mathcal{H}$
\[
\vec{\eng} (\omega, \eta)=\langle \partial^* \omega, \partial^* \eta \rangle_{L^2}.
\]
\begin{remark}
In the vein of Example \ref{example1}, if $X$ is a one-dimensional Riemannian manifold, the space of 2-forms is trivial, therefore the restriction of $\vec\Delta$ to smooth forms coincides with the Hodge--de Rham Laplacian $dd^* + d^*d$. In \cite{HT14}, it is shown more generally that for topologically one dimensional spaces, there are no associated differential 2-forms, and hence $\vec\Delta$ is analogous to the Hodge--de Rham Laplacian. This definition of form Laplacian is taylor made for  1-dimensional situations.
\end{remark}

Our first result is the following:

\begin{theorem}\label{intertwining1}
For $f \in \dom \eng$,
\[
  \partial e^{t\Delta} f=e^{t\vec\Delta} \partial f, \quad t \ge 0.
\]
\end{theorem}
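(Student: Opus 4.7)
The plan is to verify the intertwining at the infinitesimal level on $\dom\Delta$, lift it to the semigroups using uniqueness of the abstract Cauchy problem for $\vec\Delta$, and then extend to all of $\dom\eng$ by density.

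First I would establish the generator relation: for $f\in\dom\Delta$, we have by definition $\partial f\in\dom\partial^*$ with $\partial^*\partial f=-\Delta f$. Since $\Delta f\in\dom\eng=\dom\partial$, this is precisely the condition $\partial f\in\dom\vec\Delta$, and
\[
\vec\Delta\,\partial f \;=\; -\partial\partial^*\partial f \;=\; \partial\Delta f.
\]
Next, fix $f\in\dom\Delta$ and consider
\[
u(t) := \partial\,e^{t\Delta}f,\qquad v(t) := e^{t\vec\Delta}\partial f,
\]
as curves in $\irt$. Since $e^{t\Delta}f\in\dom\Delta$ for all $t\ge 0$, the previous step applied to $e^{t\Delta}f$ shows $u(t)\in\dom\vec\Delta$ with $\vec\Delta u(t)=\partial\Delta e^{t\Delta}f$. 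Using the closedness of $\partial$ and the $L^2$-differentiability of $t\mapsto e^{t\Delta}f$ with derivative $\Delta e^{t\Delta}f$, one obtains that $u$ is $C^1$ into $\irt$ with $u'(t)=\vec\Delta u(t)$. On the other hand, the self-adjoint functional calculus gives $v'(t)=\vec\Delta v(t)$. Since $u(0)=v(0)=\partial f$, uniqueness of solutions of the abstract Cauchy problem associated to the self-adjoint operator $\vec\Delta$ yields $u\equiv v$.

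Finally, for general $f\in\dom\eng$, pick $f_n\in\dom\Delta$ with $f_n\to f$ in the $\eng$-norm. The isometric identity $\|\partial g\|_\irt^2=\eng(g,g)$ gives $\partial f_n\to\partial f$ in $\irt$, and contractivity of $e^{t\vec\Delta}$ on $\irt$ gives $e^{t\vec\Delta}\partial f_n\to e^{t\vec\Delta}\partial f$. Similarly, $e^{t\Delta}$ is an $\eng$-contraction on $\dom\eng$, so $\partial e^{t\Delta}f_n\to\partial e^{t\Delta}f$ in $\irt$, and the identity passes to the limit.

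The main obstacle is justifying the $\irt$-differentiability of $u(t)=\partial e^{t\Delta}f$, since a priori $\partial$ is only closed, not bounded on $L^2$. I would handle this by working first with $f\in\dom\Delta^2$, where $s\mapsto\Delta e^{s\Delta}f$ is continuous into $\dom\eng$, writing $e^{(t+h)\Delta}f-e^{t\Delta}f=\int_t^{t+h}\Delta e^{s\Delta}f\,ds$, applying $\partial$ inside the integral via closedness, and then relaxing the extra regularity by density of $\dom\Delta^2$ in $\dom\Delta$ for the graph norm. Alternatively, one can bypass this by recasting everything as the spectral identity $\partial\,\phi(-\Delta)=\phi(-\vec\Delta)\,\partial$ on $\dom\partial$ for bounded Borel $\phi$, which follows from $-\Delta=\partial^*\partial$, $-\vec\Delta=\partial\partial^*$ and the polar decomposition of $\partial$, and specializing to $\phi(x)=e^{-tx}$.
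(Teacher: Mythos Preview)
Your argument is essentially correct and self-contained, whereas the paper's proof consists of a one-line appeal to Theorem~3.1 of Shigekawa~\cite{Shi00}: from the identity $\langle \Delta f,\partial^*\omega\rangle_2=-\vec\eng(\partial f,\omega)$ for $f\in\dom\Delta$ and $\omega\in\dom\partial^*$, the intertwining follows from Shigekawa's general result on commutation of semigroups with closed operators. Your direct Cauchy-problem argument, and especially your alternative via polar decomposition (the identity $\partial\,\phi(-\Delta)=\phi(-\vec\Delta)\,\partial$ for bounded Borel $\phi$, which is the standard functional-calculus relation between $T^*T$ and $TT^*$ for a closed $T$), amount to reproving the special case of Shigekawa's theorem needed here. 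The advantage of your route is that it is explicit and requires no external black box; the paper's route is shorter and places the result in a known abstract framework.

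One genuine slip to fix: you assert that for $f\in\dom\Delta$ one has $\Delta f\in\dom\eng$, hence $\partial f\in\dom\vec\Delta$. This is false in general; $\Delta f$ is only guaranteed to lie in $L^2$. The argument is rescued exactly as you yourself indicate later: for $t>0$ the analytic semigroup gives $e^{t\Delta}f\in\dom\Delta^2$, so $\Delta e^{t\Delta}f\in\dom\Delta\subset\dom\eng$ and the generator identity $\vec\Delta\,\partial(e^{t\Delta}f)=\partial\Delta(e^{t\Delta}f)$ holds for $t>0$, which is all the Cauchy-problem comparison needs. You should state the generator relation only for $f\in\dom\Delta^2$ (or directly for $e^{t\Delta}f$ with $t>0$) rather than for all of $\dom\Delta$.
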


\begin{proof}
For every $f \in \dom (\Delta), \omega \in \dom\partial^*$, one has
\[
\langle \Delta f , \partial^* \omega \rangle_2=-\vec{\eng} (\partial f, \omega).
\]
The result is then a consequence of Theorem 3.1 in  Shigekawa \cite{Shi00}.
\end{proof}

We can describe more precisely $\vec{\Delta}$ and its domain in the special case where $\Delta$ has pure point spectrum, i.e. there exists an increasing sequence $0\le \lambda_1 \le \lambda_2 \le \cdots$ of  eigenvalues of $-\Delta$, with finite multiplicity, and a complete orthonormal basis $(\phi_j)_{j\ge 1}$ of corresponding eigenfunctions such that
\[
-\Delta f =\sum_{j=1}^{+\infty} \lambda_j \langle f , \phi_j \rangle \phi_j, \quad f \in \dom\Delta.
\]

\begin{lemma}\label{spectre}
Assume that $\Delta$ has a pure point spectrum. Then, \[
\dom \partial^* =\left\{ \eta \in \mathcal{H}, \sum_{j=1}^{+\infty}  \langle \eta,\partial \phi_j \rangle_{\mathcal{H}}^2 <+\infty \right\},
\]
and for $\eta \in \dom \partial^*$,
\[
\partial^* \eta=\sum_{j=1}^{+\infty}  \langle \eta , \partial \phi_j \rangle_\irt  \phi_j.
\]
Furthermore
\[
\dom\vec\Delta =\left\{ \eta \in \mathcal{H}, \sum_{j=1}^{+\infty} \lambda_j \langle \eta , \partial \phi_j \rangle_{\mathcal{H}}^2 < +\infty \right\}
\]
and for every $\eta \in \dom\vec\Delta $,
\[
-\vec\Delta \eta=\sum_{j=1}^{+\infty}  \langle \eta , \partial \phi_j \rangle_\mathcal{H} \partial \phi_j.
\]
\end{lemma}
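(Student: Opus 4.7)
The plan is to exploit the fact that under pure point spectrum, the partial sums of the eigenbasis expansion of any $f \in \dom \eng$ converge to $f$ not only in $L^2$ but also in the $\eng_1$-norm, and hence $\partial f_N \to \partial f$ in $\irt$. This is the essential tool allowing us to pass limits through $\partial$ and $\partial^*$.

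For the characterization of $\dom \partial^*$, I would start from the adjoint relation: if $\eta \in \dom \partial^*$, then $\langle \partial^* \eta, \phi \rangle_2 = \langle \eta, \partial \phi \rangle_\irt$ for every $\phi \in \dom \eng$. Testing against $\phi = \phi_j$ shows that $\langle \eta, \partial \phi_j \rangle_\irt$ is the $j$-th Fourier coefficient of $\partial^* \eta$ in the $L^2$ basis $(\phi_j)$, and Parseval gives both the summability $\sum_j \langle \eta, \partial \phi_j \rangle_\irt^2 = \|\partial^* \eta\|_2^2 < \infty$ and the expansion $\partial^* \eta = \sum_j \langle \eta, \partial \phi_j \rangle_\irt \phi_j$. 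Conversely, given $\eta \in \irt$ with the summability, define $g := \sum_j \langle \eta, \partial \phi_j \rangle_\irt \phi_j \in L^2$. For any $f \in \dom \eng$ with $L^2$-expansion $f = \sum_j c_j \phi_j$, the $\eng_1$-convergence $f_N \to f$ implies $\partial f_N \to \partial f$ in $\irt$; then
\[
\langle \eta, \partial f \rangle_\irt = \lim_{N \to \infty} \sum_{j=1}^N c_j \langle \eta, \partial \phi_j \rangle_\irt = \langle g, f \rangle_2,
\]
so $\eta \in \dom \partial^*$ with $\partial^* \eta = g$.

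For the description of $\vec\Delta$, I would invoke $\vec\Delta = -\partial \partial^*$, so $\eta \in \dom \vec\Delta$ iff $\eta \in \dom \partial^*$ and $\partial^* \eta \in \dom \partial = \dom \eng$. The latter condition, via the spectral representation, reads
\[
\eng(\partial^* \eta, \partial^* \eta) = \sum_{j=1}^\infty \lambda_j \, |\langle \partial^* \eta, \phi_j \rangle_2|^2 = \sum_{j=1}^\infty \lambda_j \, \langle \eta, \partial \phi_j \rangle_\irt^2 < \infty,
\]
which is exactly the claimed domain. Finally, since the partial sums of $\partial^* \eta$ converge to $\partial^* \eta$ in the $\eng_1$-norm (by the same spectral argument, using the $\lambda_j$-summability), applying the closed operator $\partial$ term by term gives
\[
-\vec\Delta \eta = \partial \partial^* \eta = \sum_{j=1}^\infty \langle \eta, \partial \phi_j \rangle_\irt \, \partial \phi_j.
\]

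The main step requiring care is the $\eng_1$-convergence of eigenbasis expansions for elements of $\dom \eng$ (and of $\partial^* \eta$ when $\eta \in \dom \vec\Delta$): without it, we cannot legitimately commute the closed operators $\partial$, $\partial^*$ with the infinite sums. Once this spectral fact is in hand — a direct consequence of the spectral theorem applied to the self-adjoint $\Delta$ with pure point spectrum — the rest of the argument is a sequence of Parseval-type identities.
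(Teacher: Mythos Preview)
Your proposal is correct and follows essentially the same approach as the paper's proof: both hinge on the spectral characterization of $\dom\eng$ and the resulting $\eng_1$-convergence of eigenfunction expansions (the paper records this as the identity $\partial f = \sum_j \langle f,\phi_j\rangle\,\partial\phi_j$), from which the descriptions of $\dom\partial^*$, $\partial^*$, $\dom\vec\Delta$, and $\vec\Delta$ are read off via the adjoint relation and the definition $\vec\Delta=-\partial\partial^*$. Your write-up is in fact more explicit than the paper's, which states the key expansion and then passes to the conclusions with little elaboration.
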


\begin{proof}
We observe first that
\begin{align*}
\dom\eng &=\left\{ f \in L^2(X,\mu) , \lim_{t \to 0} \left\langle \frac{f -e^{t \Delta}f}{t} , f \right\rangle_{L^2(X,\mu)} \text{  exists} \right\} \\
 &=\left\{ f \in L^2(X,\mu) , \lim_{t \to 0} \frac{1}{t} \sum_{j=1}^{+\infty} (1-e^{- \lambda_j t}) \langle f , \phi_j \rangle^2 \text{  exists} \right\} \\
  &=\left\{ f \in L^2(X,\mu), \sum_{j=1}^{+\infty} \lambda_j \langle f, \phi_j \rangle^2 <+\infty \right\}  ,
\end{align*}
and moreover that for $f \in \dom \partial=\dom\eng$, 
\begin{align}\label{bar}
\partial f = \sum_{j=1}^{+\infty}  \langle f ,  \phi_j \rangle \partial \phi_j.
\end{align}
As a consequence,
\[
\dom \partial^* =\left\{ \eta \in \mathcal{H}, \sum_{j=1}^{+\infty}  \langle \eta,\partial \phi_j \rangle_{\mathcal{H}}^2 <+\infty \right\},
\]
and for $\eta \in \dom \partial^*$,
\[
\partial^* \eta=\sum_{j=1}^{+\infty}  \langle \eta , \partial \phi_j \rangle_\irt  \phi_j.
\]
From the definition of $\vec\Delta$, this immediately yields
 \[
\dom\vec\Delta =\left\{ \eta \in \mathcal{H}, \sum_{j=1}^{+\infty} \lambda_j \langle \eta , \partial \phi_j \rangle_{\mathcal{H}}^2 < +\infty \right\}
\]
and for every $\eta \in \dom\vec\Delta $ we have,
\[
-\vec\Delta \eta=\sum_{j=1}^{+\infty}  \langle \eta , \partial \phi_j \rangle_\irt \partial \phi_j.
\]
\end{proof}

\subsection{Bakry-\'Emery estimates}

The intertwining property in Theorem \ref{intertwining1} may be used to establish  Bakry-\'Emery type estimates for $e^{t\Delta}$.

It is possible to think of $\irt$ as  measurable sections of a vector bundle over $X$. Our presentation follows \cite{HRT}, but  follows from \cite{Ebe99}.  Let $\set{f_n}_{n=1}^\infty$ be a countable set of functions which is $\eng$-dense in $\diralg$. Define $\A := \spn\set{f_n}_{n=1}^\infty$.
Define a positive bilinear form on simple tensors of $\A\otimes \B_b(X)$ by
\[
\gen{f_1\otimes g_1,f_2\otimes g_2}_{\irt_x} = g_1(x)g_2(x)\Gamma(f_1,f_2)(x).
\]
The fibre of $\irt$ at $x$ is defined to be the space $\irt_x := \A/\ker \gen{.,.}_{\irt_x}$, where 
\[
\ker\gen{.,.}_{\irt_x} := \set{\eta\in\A\otimes \B_b(X)~:~\gen{\eta,\eta}_{\irt,x} = 0}.
\]

\begin{example}
In the case of Example \ref{example1}, $\irt_x$ can be identified with $T^*_x \mathbb{M}$.

\end{example}

\begin{theorem}[\cite{HRT}, {Theorem 2.1} or \cite{Ebe99}, {Theorem 3.9}]
The fibres $\irt_x$ are a measurable field over $X$, and $\irt$ is isometrically isomorphic to $ \int_{X}^\oplus \irt_x \ d\mu(x)$. In particular, for any $\eta_1,\eta_2\in \irt$,
\[
\gen{\eta_1,\eta_2}_\mathcal{H} = \int_{X} \gen{\eta_1,\eta_2}_{\irt_x} \ d\mu(x).
\]
\end{theorem}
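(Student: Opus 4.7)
The plan is to put this in the standard direct integral framework by exhibiting a fundamental sequence of measurable sections, checking that fiberwise inner products are measurable, and then extending the obvious map on simple tensors to a Hilbert space isometry.

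First I would fix the countable $\eng$-dense family $\{f_n\}_{n \geq 1}$ generating $\A$ and take as candidate fundamental sections the assignments $e_n \colon x \mapsto [f_n \otimes \mathbf{1}]_{\irt_x}$. The inner products satisfy
\[
\gen{e_n, e_m}_{\irt_x} = \Gamma(f_n, f_m)(x),
\]
and because $\eng$ is assumed to admit a carr\'e du champ, the right-hand side is a genuine $\mu$-measurable function (not merely a measure). This is exactly the defining condition for $\{e_n\}$ to form a measurable family of vectors, equipping $\{\irt_x\}_{x \in X}$ with the structure of a measurable field in the sense of Dixmier. To see that the $e_n(x)$ actually generate $\irt_x$, note that for any simple tensor $f \otimes g \in \A \otimes \B_b(X)$ and any $x \in X$, one has $[f \otimes g]_{\irt_x} = g(x)\, [f \otimes \mathbf{1}]_{\irt_x}$; hence approximating $f$ by elements of $\spn\{f_n\}$ in the $\eng$-norm (and hence in $\Gamma(\cdot)(x)$ for $\mu$-a.e.\ $x$ along a subsequence) shows the $e_n$ have dense span in $\irt_x$ for almost every $x$.

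Next I would construct the isometry $\Phi \colon \irt \to \int_X^\oplus \irt_x \, d\mu(x)$. On simple tensors define $\Phi(f \otimes g)(x) := g(x)\, [f \otimes \mathbf{1}]_{\irt_x}$ and extend by linearity to $\A \otimes \B_b(X)$. The key computation is purely definitional:
\[
\int_X \gen{\Phi(f_1 \otimes g_1), \Phi(f_2 \otimes g_2)}_{\irt_x}\, d\mu(x) = \int_X g_1(x) g_2(x)\, \Gamma(f_1, f_2)(x)\, d\mu(x) = \gen{f_1 \otimes g_1, f_2 \otimes g_2}_{\irt}.
\]
Since $\Phi$ preserves the inner product on the pre-Hilbert space, it descends to the quotient by the null space of $\gen{\cdot, \cdot}_{\irt}$ and then extends uniquely to an isometry of completions.

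It remains to check surjectivity. Any $\eta \in \int_X^\oplus \irt_x \, d\mu(x)$ is, by definition of the direct integral, a norm-limit of sections of the form $x \mapsto \sum_{k=1}^{N} g_k(x)\, e_{n_k}(x)$ with $g_k \in \B_b(X)$, and each such section is precisely $\Phi\bigl(\sum_k f_{n_k} \otimes g_k\bigr)$; so the image of $\Phi$ is dense, and by isometry it is all of the direct integral. The identity in the statement is then the restriction of the inner product on $\int_X^\oplus \irt_x\, d\mu(x)$ to the image, transported by $\Phi^{-1}$.

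The main obstacle is the measurable-field bookkeeping: one must verify that the pointwise assignments $x \mapsto [f_n \otimes \mathbf{1}]_{\irt_x}$ are genuinely well-defined (the quotient by $\ker\gen{\cdot,\cdot}_{\irt_x}$ is taken fibrewise, so one must argue that the resulting fibres really carry a Hilbert structure compatible with the global structure of $\irt$), and that redefining each $f_n$ on a $\mu$-null set does not affect the fibre construction in an essential way. This is precisely the content of the cited results of Hinz--R\"ockner--Teplyaev and Eberle, to which one appeals for the abstract measurable-field machinery.
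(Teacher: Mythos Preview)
The paper does not prove this theorem; it is stated with attribution to \cite{HRT} and \cite{Ebe99} and used as a black box. Your sketch is the standard direct-integral construction and is essentially what those references carry out, so there is nothing to compare against in the present paper.

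One small slip worth noting: you write that to see the $e_n(x)$ span $\irt_x$ one should ``approximate $f$ by elements of $\spn\{f_n\}$ in the $\eng$-norm.'' But by definition $\A = \spn\{f_n\}$, so any $f \in \A$ is already a finite linear combination of the $f_n$; no approximation is needed, and the $e_n(x)$ span $\irt_x$ algebraically before any completion. The approximation argument you have in mind would be relevant if one wanted to identify the completion of the fibre with something built from all of $\dom\eng$, but as the fibres are defined here (quotients of $\A \otimes \B_b(X)$) the spanning is immediate.
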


$\norm{.}_{\irt_x}$ shall be used to denote the fiberwise norm associated to $\gen{.,.}_{\irt_x}$. Note that, for any $a_1,a_2\in\B_b(X)$ and $\eta_1,\eta_2\in \irt$ then, $\gen{a_1\eta_1,a_2\eta_2}_{\irt,x} = a_1(x)a_2(x) \gen{\eta_1,\eta_2}_{\irt_x}$.

The following result is then easy to establish.

\begin{theorem}\label{self_gh}
Let $C_1 \ge 1$. Assume that for every $\eta \in \irt$, we have $\mu$-almost everywhere
\[
\| e^{t \vec{\Delta}} \eta \|_{\mathcal{H}_x} \le C_1 (e^{t \Delta} \| \eta \|_{\mathcal{H}_{.}})(x), \quad 0 \le t \le 1.
\]
Then, the semigroup $e^{t \Delta}$ satisfies the Bakry-\'Emery estimate
\begin{align*}
\sqrt{\Gamma(e^{t \Delta} f)} \le C_1 e^{C_2 t} e^{t \Delta} \sqrt{\Gamma(f)}, \quad f \in \dom\eng, \quad t \ge 0, 
\end{align*}
for some $C_2 \in \mathbb{R}$.
\end{theorem}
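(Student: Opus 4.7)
The plan is to combine the intertwining formula of Theorem \ref{intertwining1} with the fiberwise identification of $\sqrt{\Gamma(f)}$ as the norm of $\partial f$ in $\mathcal{H}_x$, and then bootstrap from $[0,1]$ to all $t\ge 0$ using the semigroup property.

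First I would observe that from the definition of $\gen{\cdot,\cdot}_{\mathcal{H}_x}$ one has, for every $f\in\dom\eng$, the pointwise identity $\|\partial f\|_{\mathcal{H}_x}^2 = \gen{f\otimes\mathbf 1, f\otimes\mathbf 1}_{\mathcal{H}_x} = \Gamma(f,f)(x)$, so that $\sqrt{\Gamma(f)}(x) = \|\partial f\|_{\mathcal{H}_x}$ for $\mu$-a.e.\ $x$. Combining this with the intertwining identity $\partial e^{t\Delta} f = e^{t\vec\Delta}\partial f$ and the hypothesis applied to $\eta = \partial f$, we get for every $0\le t\le 1$,
\[
\sqrt{\Gamma(e^{t\Delta} f)}(x) = \|e^{t\vec\Delta}\partial f\|_{\mathcal{H}_x} \le C_1\,(e^{t\Delta}\|\partial f\|_{\mathcal{H}_\cdot})(x) = C_1\,e^{t\Delta}\sqrt{\Gamma(f)}(x),
\]
which is exactly the desired estimate on $[0,1]$ with no exponential factor needed.

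For $t \ge 0$ arbitrary, I would iterate. Write $t = k + s$ with $k\in\mathbb{N}$ and $s\in[0,1)$, and use the semigroup identity $e^{t\Delta}f = e^{s\Delta}(e^{\Delta})^k f$ together with $e^{s\Delta}\circ e^{\Delta} = e^{\Delta}\circ e^{s\Delta}$. Applying the $[0,1]$ estimate proved above $k+1$ times (each time using the sub-Markovian property $e^{r\Delta}\circ e^{r'\Delta} = e^{(r+r')\Delta}$ to push the inequality under the semigroup) yields
\[
\sqrt{\Gamma(e^{t\Delta}f)} \le C_1^{\,k+1}\,e^{t\Delta}\sqrt{\Gamma(f)} \le C_1\,e^{(\log C_1)\,t}\,e^{t\Delta}\sqrt{\Gamma(f)},
\]
since $C_1\ge 1$ and $k\le t$. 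Setting $C_2 := \log C_1$ concludes the proof.

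The only mildly subtle step is the iteration: one must make sure that, in the chain of inequalities $\sqrt{\Gamma(e^{\Delta}g)}\le C_1\,e^{\Delta}\sqrt{\Gamma(g)}$ applied with $g = e^{j\Delta}f$, the positivity-preserving character of the heat semigroup is used to propagate the pointwise inequality through $e^{s\Delta}$. This is where the strict locality and the standard property that $e^{t\Delta}$ preserves nonnegativity (inherent to the Markovian Dirichlet form setting) enter; no new idea is required beyond this, so I do not expect a substantive obstacle.
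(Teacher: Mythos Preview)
Your proof is correct and follows essentially the same approach as the paper: use the intertwining $\partial e^{t\Delta} = e^{t\vec\Delta}\partial$ together with the fiberwise identity $\|\partial f\|_{\mathcal{H}_x}=\sqrt{\Gamma(f)}(x)$ to obtain the estimate on $[0,1]$, then iterate via the semigroup property and positivity preservation to cover all $t\ge 0$, arriving at $C_2=\log C_1$. The only cosmetic difference is in the bookkeeping of the iteration (the paper phrases it as $t\in[n-1,n]\Rightarrow C_1^n$, you phrase it as $t=k+s\Rightarrow C_1^{k+1}$), which amounts to the same bound.
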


\begin{proof}
From Theorem \ref{intertwining1}, we have for $f \in \dom \eng$,
\[
\partial e^{t \Delta} f =  e^{t \vec{\Delta}} \partial f.
\]
Since $\|  \partial e^{t \Delta} f  \|_{\mathcal{H}_x}=\sqrt{\Gamma(e^{t \Delta} f) (x)}$, we deduce that for $0 \le t \le 1$, we have
\begin{align*}
\sqrt{\Gamma (e^{t \Delta} f)} \le C_1  e^{t \Delta} \sqrt{\Gamma ( f)}.
\end{align*}
Applying this inequality with $e^{t \Delta} f$ instead of $f$, we deduce that for $0 \le t \le 1$, we have
\begin{align*}
\sqrt{\Gamma (e^{2t \Delta} f)} \le C_1^2  e^{2t \Delta} \sqrt{\Gamma ( f)}.
\end{align*}
By induction, we then easily deduce that for $n \in \mathbb{N}$, $n \ge 1$, and $t \in [n-1, n]$, we have
\[
\sqrt{\Gamma (e^{t \Delta} f)} \le C_1^n  e^{t \Delta} \sqrt{\Gamma ( f)}.
\]
We conclude then that
\[
\sqrt{\Gamma (e^{t \Delta} f)} \le C_1 C_1^{n-1}  e^{t \Delta} \sqrt{\Gamma ( f)}\le C_1 e^{C_2t}e^{t \Delta} \sqrt{\Gamma ( f)},
\]
with $C_2=\ln C_1$.
\end{proof}

\begin{remark}
One can deduce from \cite{Shi97} several statements equivalent to the semigroup domination
\[
\| e^{t \vec{\Delta}} \eta \|_{\mathcal{H}_x} \le  (e^{t \Delta} \| \eta \|_{\mathcal{H}_{.}})(x), \quad 0 \le t \le 1.
\]
However, in this work, we shall be more interested in situations for which the optimal  $C_1 $ is strictly larger than 1. Such situations include the metric graphs studied in Section 5 of the present paper.
\end{remark}

\begin{example}
In the case of Example \ref{example1}, the semigroup domination
\[
\| e^{t \vec{\Delta}} \eta \|_{\mathcal{H}_x} \le  (e^{t \Delta} \| \eta \|_{\mathcal{H}_{.}})(x), \quad 0 \le t \le 1.
\]
is equivalent to the non negativity of the Ricci curvature tensor.
\end{example}

In large times, the  Bakry-\'Emery estimate may be obtained under some \textit{compactness} assumptions.

\begin{theorem}\label{BE large time}
Assume that $\Delta$ has a pure point spectrum, $ 1 \in \dom \Delta$ and that the Dirichlet space $(\eng , \dom \eng)$ satisfies the Poincar\'e inequality:
\[
\int_X\left( f-\int_X f d\mu\right)^2 d\mu \le \frac{1}{\lambda_1} \eng (f,f), \quad f \in \dom \in  \eng.
\]
Assume moreover that the heat kernel $p_t(x,y)$ of $e^{t \Delta}$ satisfies the estimates: For some $ t_0 >0$, there exists a $M>0$ such that  for $\mu$-almost every $x,y \in X$
\[
p_{t_0}(x,y) \le M, \quad |  \Gamma(p_{t_0} (.,y))(x) | \le M.
\]
Then, there exist  constants $C>0$ and $t_1>t_0$ depending only on $M$, $t_0$ and the spectrum of $\Delta$ such that for every $t >t_1$ and $f \in \dom \eng$,
\[
\sqrt{\Gamma(e^{t \Delta } f) }\le C e^{-\lambda_1 t} e^{t \Delta }\sqrt{\Gamma( f) }.
\]
\end{theorem}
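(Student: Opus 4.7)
The plan is to combine the intertwining of Theorem \ref{intertwining1} with a small-time $L^2\to L^\infty$ bound for $e^{t_0\vec\Delta}$ coming from the heat-kernel hypotheses, a spectral-gap decay on one-forms coming from Lemma \ref{spectre}, and a uniform lower bound on the scalar heat kernel. More precisely, for $t$ sufficiently large I aim to establish the two pointwise estimates
\[
\text{(i)}\quad \sqrt{\Gamma(e^{t\Delta}f)}(x)\le C'\,e^{-\lambda_1 t}\int_X\sqrt{\Gamma(f)}\,d\mu,\qquad \text{(ii)}\quad e^{t\Delta}\sqrt{\Gamma(f)}(x)\ge \tfrac{1}{2\mu(X)}\int_X\sqrt{\Gamma(f)}\,d\mu,
\]
and then divide (i) by (ii). Below I write $\|\omega\|_{L^p(\irt)}:=\|\,\|\omega(\cdot)\|_{\irt_\cdot}\,\|_{L^p(X,\mu)}$ for the fibrewise $L^p$-norms of a measurable one-form.

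For (ii) I start from the spectral expansion $p_t(x,y)-\frac{1}{\mu(X)}=\sum_{j\ge1}e^{-\lambda_j t}\phi_j(x)\phi_j(y)$. The semigroup property and the upper bound give $p_{2t_0}(x,x)=\int p_{t_0}(x,z)^2\,d\mu(z)\le M$, hence $\sum_{j\ge 1}e^{-2\lambda_j t_0}\phi_j(x)^2\le M$ for $\mu$-a.e.\ $x$. Writing $e^{-\lambda_j t}\le e^{-\lambda_1(t-2t_0)}\,e^{-2\lambda_j t_0}$ for $t\ge 2t_0$ and applying Cauchy--Schwarz in the summation index $j$ yields $\bigl|p_t(x,y)-\tfrac{1}{\mu(X)}\bigr|\le M\,e^{-\lambda_1(t-2t_0)}$. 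Choosing $t_1>2t_0$ so that the right-hand side falls below $\tfrac{1}{2\mu(X)}$ gives $p_t(x,y)\ge \tfrac{1}{2\mu(X)}$ uniformly, and (ii) follows by integrating against $\sqrt{\Gamma(f)}$.

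For (i) I differentiate $e^{t_0\Delta}f(x)=\int p_{t_0}(x,y)f(y)\,d\mu(y)$ under the integral. The hypothesis $1\in\dom\Delta$ forces $\int p_{t_0}(x,y)\,d\mu(y)=1$, so constants can be subtracted, and with $\bar f:=\tfrac{1}{\mu(X)}\int_X f\,d\mu$ I obtain
\[
\partial e^{t_0\Delta}f(x)=\int_X \partial_x p_{t_0}(x,y)\bigl(f(y)-\bar f\bigr)\,d\mu(y)\in\irt_x .
\]
A fibrewise Cauchy--Schwarz, the bound $\Gamma(p_{t_0}(\cdot,y))(x)\le M$, and Poincar\'e's inequality $\|f-\bar f\|_2\le\lambda_1^{-1/2}\|\partial f\|_{\mathcal{H}}$ together give $\|\partial e^{t_0\Delta}f(x)\|_{\irt_x}\le K\,\|\partial f\|_{\mathcal{H}}$ with $K:=\sqrt{M\mu(X)/\lambda_1}$. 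By Theorem \ref{intertwining1} this reads $\|e^{t_0\vec\Delta}\omega\|_{L^\infty(\irt)}\le K\|\omega\|_{\mathcal{H}}$ on the $\mathcal{H}$-closure of $\mathrm{range}(\partial)$. Self-adjointness of $e^{t_0\vec\Delta}$ and the $L^1/L^\infty$ duality on the measurable field $\int_X^\oplus\irt_x\,d\mu$ then produce the dual bound $\|e^{t_0\vec\Delta}\omega\|_{\mathcal{H}}\le K\|\omega\|_{L^1(\irt)}$ on the same subspace. Lemma \ref{spectre} identifies $\overline{\mathrm{range}(\partial)}$ with $\overline{\spn\{\partial\phi_j:j\ge 1\}}$, whose $\vec\Delta$-eigenvalues are $-\lambda_j\le-\lambda_1$, so $\|e^{s\vec\Delta}\omega\|_{\mathcal{H}}\le e^{-\lambda_1 s}\|\omega\|_{\mathcal{H}}$ there. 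Decomposing $e^{t\vec\Delta}=e^{t_0\vec\Delta}\circ e^{(t-2t_0)\vec\Delta}\circ e^{t_0\vec\Delta}$ for $t\ge 2t_0$ and chaining the three estimates yields $\|e^{t\vec\Delta}\partial f(x)\|_{\irt_x}\le K^2 e^{-\lambda_1(t-2t_0)}\int_X\sqrt{\Gamma(f)}\,d\mu$, which is (i) after one more use of the intertwining.

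The main technical obstacle is the rigorous justification of the pointwise formula for $\partial e^{t_0\Delta}f(x)$ as an $\irt_x$-valued integral and of the accompanying fibrewise Cauchy--Schwarz estimate. In the abstract measurable-bundle framework $\irt=\int_X^\oplus\irt_x\,d\mu$, this requires careful use of the direct-integral representation together with the essential-supremum bound $\Gamma(p_{t_0}(\cdot,y))(x)\le M$ to dominate the integrand uniformly. Once these foundations are laid, the remainder is essentially bookkeeping with the three semigroup bounds and produces $t_1$ and $C$ depending only on $M$, $t_0$ and $\lambda_1$.
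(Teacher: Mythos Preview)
Your argument is correct and follows the same overall architecture as the paper's proof: obtain a pointwise upper bound $\sqrt{\Gamma(e^{t\Delta}f)}(x)\le C' e^{-\lambda_1 t}\int_X\sqrt{\Gamma(f)}\,d\mu$, a pointwise lower bound $e^{t\Delta}\sqrt{\Gamma(f)}(x)\ge c\int_X\sqrt{\Gamma(f)}\,d\mu$ for large $t$, and divide. The technical point you flag---justifying the fibrewise integral $\partial e^{t_0\Delta}f(x)=\int\partial_x p_{t_0}(x,y)\,(f(y)-\bar f)\,d\mu(y)$ and the accompanying pointwise Cauchy--Schwarz---is exactly the same one the paper glosses over when it asserts $\|\partial\phi_j\|_{\mathcal H_x}\le M e^{\lambda_j t_0}$ ``similarly'' to the scalar bound.

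Where the two proofs differ is in the \emph{execution} of the upper bound. The paper extracts pointwise eigenfunction estimates $|\phi_j(x)|\le Me^{\lambda_j t_0}$ and $\|\partial\phi_j\|_{\mathcal H_x}\le Me^{\lambda_j t_0}$ from the heat-kernel hypotheses, writes $\partial e^{t\Delta}f=-\sum_j \lambda_j^{-1}e^{-\lambda_j t}\langle\partial f,\partial\phi_j\rangle_{\mathcal H}\,\partial\phi_j$, and bounds the series termwise; this leads to the explicit $C(t)=M^2\sum_j\lambda_j^{-1}e^{-\lambda_j(t-2t_0)}\big/\bigl(1-M^2\sum_j e^{-\lambda_j(t-2t_0)}\bigr)$ and hence constants depending on the full spectral sequence. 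You instead run an ultracontractivity-style operator argument on the invariant subspace $\overline{\operatorname{range}\partial}$: the gradient bound gives $e^{t_0\vec\Delta}:\mathcal H\to L^\infty(\irt)$, self-adjointness dualises this to $L^1(\irt)\to\mathcal H$, and Lemma~\ref{spectre} supplies the $e^{-\lambda_1 s}$ contraction on $\mathcal H$; the composition $e^{t_0\vec\Delta}\circ e^{(t-2t_0)\vec\Delta}\circ e^{t_0\vec\Delta}$ then chains these. This is cleaner and yields constants depending only on $M$, $t_0$, $\mu(X)$ and $\lambda_1$, which is sharper than the statement actually claims. The paper's route, on the other hand, is more elementary in that it never needs the $L^1$--$L^\infty$ duality on the direct integral $\int_X^\oplus\irt_x\,d\mu$ or the invariance of $\overline{\operatorname{range}\partial}$ under $e^{t\vec\Delta}$.
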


\begin{proof}
From the assumptions $\mu (X) <+\infty$. For convenience, we assume that $\mu(X)=1$. From the spectral decomposition
\[
-\Delta f =\sum_{j=1}^{+\infty} \lambda_j \langle f , \phi_j \rangle \phi_j, \quad f \in \dom\Delta,
\]
where the $\lambda_i$'s are the non-zero eigenvalues, and the Poincar\'e inequality we deduce that
\[
p_t(x,y)=1+\sum_{j=1}^{+\infty} e^{-\lambda_j t} \phi_j (x) \phi_j(y).
\]
Since $e^{t_0 \Delta } \phi_j=e^{-\lambda_j t_0} \phi_j$, we deduce that $\mu$ almost everywhere
\begin{align*}
|\phi_j (x)|&=e^{\lambda_j t_0}\left|  \int_X p_{t_0} (x,y) \phi_j (y) d\mu(y) \right| \\
 &\le e^{\lambda_j t_0} \left( \int_X p_{t_0} (x,y)^2 d\mu (y) \right)^{1/2} \\
 &\le M e^{\lambda_j t_0}.
\end{align*}
Similarly, from the bound  $|   \Gamma(p_{t_0} (.,y))(x) | \le M$, one obtains
\[
\| \partial \phi_j \|_{\mathcal{H}_x} \le M e^{\lambda_j t_0}.
\]
As a first consequence, for $t >2 t_0$,
\begin{align*}
| p_t (x,y)-1|& \le \sum_{j=1}^{+\infty} e^{-\lambda_j t} | \phi_j (x) \phi_j(y)| \\
 &\le M^2  \sum_{j=1}^{+\infty} e^{-\lambda_j (t-2t_0)},
\end{align*}
and thus
\[
p_t(x,y) \ge 1-M^2  \sum_{j=1}^{+\infty} e^{-\lambda_j (t-2t_0)}.
\]
Now from \eqref{bar}, one has for $f \in \dom \eng$,
\begin{align*}
\partial e^{t \Delta } f  &  =\sum_{j=1}^{+\infty}  \langle e^{t\Delta} f , \phi_j \rangle_\mathcal{H} \partial \phi_j  \\
 &=\sum_{j=1}^{+\infty} e^{-\lambda_j t} \langle  f , \phi_j \rangle_\mathcal{H} \partial \phi_j \\
 &=- \sum_{j=1}^{+\infty} \frac{1}{\lambda_j} e^{-\lambda_j t} \langle  \partial f , \partial \phi_j \rangle_\mathcal{H} \partial \phi_j
\end{align*}
This implies that for $t>2t_0$, $\mu$ almost everywhere
\[
 \| \partial e^{t \Delta } f \|_{\mathcal{H}_x} \le M^2 \int_{X} \|   \partial  f \|_{\mathcal{H}_y} d\mu(y)    \sum_{j=1}^{+\infty}  \frac{1}{\lambda_j} e^{-\lambda_j (t-2t_0)}.
\]
We conclude that for $t$ large enough
\[
 \| \partial e^{t \Delta } f \|_{\mathcal{H}_x} \le C(t) (e^{t \Delta } \| \partial  f \|_{\mathcal{H}_\cdot} )(x)
 \]
 where
 \[
 C(t)=\frac{M^2  \sum_{j=1}^{+\infty}  \frac{1}{\lambda_j} e^{-\lambda_j (t-2t_0)}}{ 1-M^2  \sum_{j=1}^{+\infty}   e^{-\lambda_j (t-2t_0)}}.
 \]
 The conclusion easily follows.
\end{proof}

\section{Sobolev and  isoperimetric inequalities on Dirichlet spaces}

\subsection{Setting}

In this section, we  work under the assumptions of Section 2.1.  We shall moreover assume that $(X,d)$ is a compact metric space and that $\mu(X)=1$.  Let $P_t=e^{t \Delta}$ be the heat semigroup generated by $\Delta$.  We shall  assume that the semigroup on forms $\vec{P}_t=e^{t \vec{\Delta}}$ satisfies $\mu$-almost everywhere the semigroup domination:
\begin{align}\label{PLK}
 \| \vec{P}_t \eta \|_{\mathcal{H}_x} \le  C_1 (P_t  \| \eta \|_{\mathcal{H}_\cdot} )(x), \quad \eta \in \mathcal{H}, \quad  0 \le t \le 1,
\end{align}
where $C \ge 1$ is a constant. From the results in Section 5, this assumption is for instance satisfied in any metric graph which has a finite number number of edges.  It follows from Theorem \ref{self_gh} that a Bakry-\'Emery estimate
\begin{align}\label{BE-buser}
\sqrt{\Gamma(P_t f)} \le C_1 e^{C_2 t} P_t \sqrt{\Gamma(f)}, \quad f \in \dom\eng, \quad t \ge 0, 
\end{align}
is satisfied for some $C_2 \ge 0$. We note that, as in Section 6, Remark 6.6,  of \cite{BBBC}, this inequality alone automatically implies a large number of functional  inequalities for the heat semigroup. We also note that \eqref{BE-buser} is equivalent to a Lipschitz continuity  property of $P_t$ in the Wasserstein distance (see \cite{Kuw10}).  In this section, we are interested in the space of bounded variation functions and take full advantage of the semigroup domination \eqref{PLK} to prove several Sobolev and isoperimetric type inequalities.

\

In the sequel, we will say that $\eng$ satisfies a spectral gap inequality if there exists a positive $\lambda_1$, such that for every $f \in \dom \eng$,
\[
\int \left( f-\int f d\mu \right)^2 d\mu \le \frac{1}{\lambda_1} \eng (f,f)
\]
The best constant $\lambda_1$ in this inequality is then called the spectral gap.

\

We will say that $\eng$ satisfies a log-Sobolev inequality if there exists a positive $\rho_0$, such that for every $f \in \dom \eng$,
\begin{equation}\label{log Sob}
\int f^2 \ln f^2 d\mu -\int f^2 d\mu \ln  \int f^2 d\mu \le \frac{1}{\rho_0}  \int \Gamma(f) d\mu,
\end{equation} 

The best constant $\rho_0$ in this inequality is then called the log-Sobolev constant.

\

Criteria to ensure that a spectral gap and a log-Sobolev inequality are well known.
Define $p_t(x,y) : \R^+ \times X \times X \to \R$ be the heat kernel of $\Delta$ if it is the integral kernel of $P_t$, that is $P_tf (x) = \int_X f(y)p_t(x,y) \ dy$. We will assume that such kernel exists, is jointly continuous in $(t,x,y)$ and that for some $t>0$, $ \inf_{x,y} p_t (x,y) >0$. 

A first consequence of those assumptions is the regularization property of $P_t$. If $f \in L^2(X)$, then the integral $\int p_t(x,y)f(y)d\mu(y)$, $t>0$, is convergent for every $x \in X$ since
\begin{align*}
\int p_t(x,y)|f(y)|d\mu(y)  
 &\le  \sqrt{ \int p_t(x,y)^2 d\mu(y) } \|f \|_2 \\
 & \le \sqrt{p_{2t}(x,x)}  \|f \|_2.
\end{align*}
As a consequence, one can define $P_tf(x)$ for every $x \in X$. Observe also that $P_tf$ is then actually a bounded continuous function since
\begin{align*}
| P_t f (x) -P_tf(y) | & \le \int |p_t(x,z)-p_t(y,z)| |f(z)| d\mu(z) \\
 &   \le \sqrt{\int |p_t(x,z)-p_t(y,z)|^2d\mu(z)}  \|f \|_2 \\
 & \le \sqrt {p_{2t} (x,x) -p_{2t}(x,y) +p_{2t}(y,y)}\|f \|_2
\end{align*}
and similarly
\begin{align*}
|P_tf(x)| \le \sqrt{p_{2t}(x,x)}  \|f \|_2 \le \sup_{x \in X}\sqrt{p_{2t}(x,x)}  \|f \|_2 .
\end{align*}

Under our assumptions, both the spectral gap and the log-Sobolev inequality are actually satisfied. Indeed, the previous computation also shows  that $P_t$ is supercontractive, i.e for every $t>0$,  $\| P_t \|_{2 \to 4} <\infty$. Therefore from Gross' theorem (e.g. \cite[theorem 5.2.3]{BGL} and \cite[theorem 2.2.3]{Da} ), a defective logarithmic Sobolev inequality is satisfied, that is there exist two constants $A,B>0$  such that
\[
\int f^2 \ln f^2 d\mu -\int f^2 d\mu \ln  \int f^2 d\mu \le A \int \Gamma(f) d\mu+B\int f^2 d\mu.
\]
Since the heat kernel is positive and the invariant measure a probability, we deduce from the uniform positivity improving property (see \cite{MR1641566}, Theorem 2.11) that $\Delta$ admits a spectral gap.  That is, a Poincar\'e inequality is satisfied. It is then classical (see \cite{MR1845806}), that the conjunction of a spectral gap and a defective logarithmic Sobolev inequality implies the log-Sobolev inequality (i.e. we may actually take $B=0$ in the above inequality).  

As a consequence, for instance, the compact metric graphs considered in Section 5 satisfy a spectral gap and a log-Sobolev inequality.

\subsection{Bounded variation functions and Sobolev embeddings}

In this section, as a preliminary, we prove some results about the theory of bounded variation functions and Cacciopoli sets in our framework. Define
\[
\var f = \sup\set{ \gen{f,\partial^*  \eta }_2~|~  \eta  \in \dom \partial^*,  \| \eta \|_{\mathcal{H}_x}  \leq 1,  \mu \text{-almost everywhere}}
\]
Define the set of functions of finite variation 
\[
BV(X) := \set{f\in \B_b(X)~|~\var(f) < \infty}.
\]
This allows one to define the perimeter of a Borel set $E\subset X$ with $\mathbf 1_E\in BV(X)$ by
\[
P(E)  = \var \mathbf 1_E.
\]
Call any set $E$ with $P(E) < \infty$ a Caccioppoli set.

\begin{lemma}
If $f\in \dom\eng$, one has
\[
\frac{1}{C_1} \int \sqrt{\Gamma (f)} \ d\mu \le \var f \le  \int \sqrt{\Gamma (f)} \ d\mu.
\]
\end{lemma}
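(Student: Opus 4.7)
I would prove the two inequalities separately. For the upper bound, take $\eta\in\dom\partial^*$ with $\|\eta\|_{\mathcal H_x}\le 1$ $\mu$-a.e. The adjoint identity $\gen{f,\partial^*\eta}_2=\gen{\partial f,\eta}_{\mathcal H}$ (valid since $f\in\dom\partial=\dom\eng$), the direct-integral representation $\mathcal H=\int^{\oplus}\mathcal H_x\,d\mu(x)$, and fiberwise Cauchy--Schwarz yield
\[
\gen{f,\partial^*\eta}_2=\int_X\gen{\partial f,\eta}_{\mathcal H_x}d\mu\le\int_X\|\partial f\|_{\mathcal H_x}\|\eta\|_{\mathcal H_x}\,d\mu\le\int_X\sqrt{\Gamma(f)}\,d\mu,
\]
since $\|\partial f\|_{\mathcal H_x}^2=\Gamma(f)(x)$. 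Taking the supremum gives $\var f\le\int\sqrt{\Gamma(f)}d\mu$.

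For the lower bound, the obvious candidate $\eta_x=\partial f(x)/\|\partial f(x)\|_{\mathcal H_x}$ is not known to lie in $\dom\partial^*$, so my plan is to regularize it through the one-form semigroup $\vec P_t$, paying the factor $C_1$ from the standing assumption \eqref{PLK}. Fix $t\in(0,1]$, set $u:=P_tf\in\dom\Delta$, and define the measurable section $\phi$ by $\phi_x:=\partial u(x)/\|\partial u(x)\|_{\mathcal H_x}$ on $\{\partial u\ne 0\}$ and $\phi_x:=0$ otherwise. Since $\|\phi\|_{\mathcal H_x}\le 1$ and $\mu(X)=1$, $\phi\in\mathcal H$. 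Put $\eta_t:=C_1^{-1}\vec P_t\phi$. Self-adjointness of $\vec\Delta\le 0$ gives $\vec P_t\mathcal H\subseteq\dom\vec\Delta\subseteq\dom\partial^*$ for $t>0$, and \eqref{PLK} together with $L^\infty$-contractivity of $P_t$ produces $\|\eta_t\|_{\mathcal H_x}\le C_1^{-1}\cdot C_1\,P_t\|\phi\|_{\mathcal H_\cdot}(x)\le 1$ $\mu$-a.e., so $\eta_t$ is admissible. Using the adjoint identity, self-adjointness of $\vec P_t$, and the intertwining of Theorem \ref{intertwining1},
\[
\gen{f,\partial^*\eta_t}_2=C_1^{-1}\gen{\partial f,\vec P_t\phi}_{\mathcal H}=C_1^{-1}\gen{\partial P_tf,\phi}_{\mathcal H}=C_1^{-1}\int_X\sqrt{\Gamma(P_tf)}\,d\mu
\]
by the pointwise choice of $\phi$. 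Hence $\var f\ge C_1^{-1}\int\sqrt{\Gamma(P_tf)}d\mu$ for every $t\in(0,1]$.

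To conclude I would let $t\downarrow 0$. Strong continuity of $P_t$ on $\dom\eng$ gives $\partial P_tf\to\partial f$ in $\mathcal H$; the reverse fiberwise triangle inequality then yields
\[
\int_X\bigl|\sqrt{\Gamma(P_tf)}-\sqrt{\Gamma(f)}\bigr|^2d\mu\le\|\partial P_tf-\partial f\|_{\mathcal H}^2\longrightarrow 0,
\]
and $\mu(X)<\infty$ promotes this to $L^1$-convergence, so $\int\sqrt{\Gamma(P_tf)}d\mu\to\int\sqrt{\Gamma(f)}d\mu$ and the lower bound follows. The only mildly subtle point is verifying that the fiberwise-normalized $\phi$ is a genuinely measurable element of the direct-integral $\mathcal H$; this is routine since pointwise normalization on the Borel set $\{\partial u\ne 0\}$ preserves measurability and a bounded section on a finite-measure space automatically has finite $\mathcal H$-norm.
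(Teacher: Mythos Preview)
Your proof is correct and follows essentially the same strategy as the paper: the upper bound is obtained by fiberwise Cauchy--Schwarz, and the lower by testing against a $\vec P_t$-smoothed normalization of $\partial f$ (admissible thanks to \eqref{PLK} and sub-Markovianity) and passing to a limit. The paper's test form is $C_1^{-1}e^{s\vec\Delta}\bigl(\partial f/(\|\partial f\|+\varepsilon)\bigr)$ with two independent limits $s\to 0$ then $\varepsilon\to 0$, whereas you normalize $\partial(P_tf)$ with a hard cutoff and use self-adjointness of $\vec P_t$ together with the intertwining to collapse everything to a single limit $t\to 0$; this is a cosmetic variation of the same idea.
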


\begin{proof}
\begin{align*}
\var f & = \sup\set{ \gen{f,\partial^*  \eta }_2~|~  \eta  \in \dom \partial^*,  \| \eta \|_{\mathcal{H}_x}  \leq 1,  \mu \text{-almost everywhere}} \\
&  \le  \int \norm{\partial f}_{\irt_x}  \ d\mu(x) = \int \sqrt{\Gamma (f)} \ d\mu.
\end{align*}
On the other side, let
\[
\eta=\frac{1}{C_1} e^{s \vec{\Delta}} \left( \frac{\partial f}{ \norm{\partial f}_{\irt_\cdot} +\varepsilon} \right)
\]
where $s,\varepsilon >0$. We note that $\eta \in \dom \partial^*$. Indeed  $ \frac{\partial f}{ \norm{\partial f}_{\irt_\cdot} +\varepsilon} \in \mathcal{H}$ and thus by spectral theory $\eta \in \dom\vec\Delta\subset \dom \partial^*$ . Also,  from our assumption \eqref{PLK}, $\| \eta \|_{\mathcal{H}_x}  \leq 1$. As a consequence,
\[
\var f \ge \gen{f,\partial^*  \eta }_2=  \gen{\partial f,  \eta }_\irt= \frac{1}{C_1} \gen{\partial f,  e^{s \vec{\Delta}} \left( \frac{\partial f}{ \norm{\partial f}_{\irt_\cdot} +\varepsilon} \right) }_\irt .
\]
Letting $ s \to 0$ and then $\varepsilon \to 0$ finishes the proof. 
\end{proof}

\begin{theorem}\label{BV Hino}
Let $f \in BV(X)$.
\[
\var f \le  \lim \inf_{t \to 0} \int \sqrt{\Gamma (e^{t\Delta} f)} \ d\mu \le  \lim \sup_{t \to 0} \int \sqrt{\Gamma (e^{t\Delta} f)} \ d\mu \le C^2_1 \var f
\]
\end{theorem}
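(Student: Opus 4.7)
The plan is to handle the three inequalities separately; the middle one, $\liminf\le\limsup$, is automatic, so I would focus on the two nontrivial bounds. The core tools in both directions are the intertwining $\partial e^{t\Delta}=e^{t\vec\Delta}\partial$ from Theorem \ref{intertwining1}, its adjoint counterpart, and the semigroup domination \eqref{PLK}.

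For the left-hand inequality, I would fix a test one-form $\eta\in\dom\partial^*$ admissible in the definition of $\var f$, i.e.\ $\|\eta\|_{\irt_x}\le 1$ $\mu$-almost everywhere. Since $f\in BV(X)\subset \B_b(X)\subset L^2(X,\mu)$ (using $\mu(X)=1$), strong continuity of $e^{t\Delta}$ on $L^2$ gives
\[
\langle f,\partial^*\eta\rangle_2=\lim_{t\to 0}\langle e^{t\Delta}f,\partial^*\eta\rangle_2.
\]
For $t>0$, $e^{t\Delta}f\in\dom\Delta\subset\dom\partial$, so integration by parts combined with fibrewise Cauchy-Schwarz gives
\[
\langle e^{t\Delta}f,\partial^*\eta\rangle_2=\langle\partial e^{t\Delta}f,\eta\rangle_{\irt}\le \int\|\partial e^{t\Delta}f\|_{\irt_x}\,d\mu=\int\sqrt{\Gamma(e^{t\Delta}f)}\,d\mu.
\]
Passing to the $\liminf$ and then taking the supremum over admissible $\eta$ yields $\var f\le \liminf_{t\to 0}\int\sqrt{\Gamma(e^{t\Delta}f)}\,d\mu$.

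For the right-hand inequality, the Markovian property of $e^{t\Delta}$ ensures $e^{t\Delta}f\in\dom\eng\cap \B_b(X)$, so the previous lemma applies:
\[
\int\sqrt{\Gamma(e^{t\Delta}f)}\,d\mu\le C_1\,\var(e^{t\Delta}f).
\]
It therefore suffices to prove the semigroup contractivity $\var(e^{t\Delta}f)\le C_1\var f$. Taking the adjoint of the intertwining identity, and using that both $e^{t\Delta}$ and $e^{t\vec\Delta}$ are self-adjoint, yields $e^{t\Delta}\partial^*=\partial^*e^{t\vec\Delta}$ on $\dom\partial^*$. Then for any admissible $\eta$,
\[
\langle e^{t\Delta}f,\partial^*\eta\rangle_2=\langle f,e^{t\Delta}\partial^*\eta\rangle_2=\langle f,\partial^*(e^{t\vec\Delta}\eta)\rangle_2.
\]
The form $e^{t\vec\Delta}\eta$ lies in $\dom\vec\Delta\subset\dom\partial^*$, and by the semigroup domination \eqref{PLK} together with $e^{t\Delta}\mathbf 1=\mathbf 1$ (which holds because $\mathbf 1\in\dom\Delta$ with $\Delta\mathbf 1=0$ on the probability space),
\[
\|e^{t\vec\Delta}\eta\|_{\irt_x}\le C_1\,(e^{t\Delta}\|\eta\|_{\irt_\cdot})(x)\le C_1\,(e^{t\Delta}\mathbf 1)(x)=C_1.
\]
Thus $C_1^{-1}e^{t\vec\Delta}\eta$ is admissible for $\var f$, yielding $\langle e^{t\Delta}f,\partial^*\eta\rangle_2\le C_1\var f$. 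Supremising over admissible $\eta$ gives $\var(e^{t\Delta}f)\le C_1\var f$, and combining with the lemma produces the right-hand bound with the total constant $C_1^2$.

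The main obstacle is the careful handling of domains: one must justify that the adjoint intertwining $e^{t\Delta}\partial^*=\partial^*e^{t\vec\Delta}$ really holds on all of $\dom\partial^*$ (not merely on $\dom\vec\Delta$), and that $e^{t\vec\Delta}\eta\in\dom\partial^*$ for every admissible $\eta$. Both points follow from the self-adjointness argument and from the smoothing property of the strongly continuous semigroup $e^{t\vec\Delta}$ on $\irt$, but they are the only places where the abstract framework really has to be used, so they deserve to be spelled out explicitly.
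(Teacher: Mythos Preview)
Your proposal is correct and follows essentially the same approach as the paper's own proof: the lower bound via $L^2$-convergence of $e^{t\Delta}f$ applied against a fixed admissible $\eta$, and the upper bound via the adjoint intertwining $e^{t\Delta}\partial^*=\partial^*e^{t\vec\Delta}$ together with the semigroup domination \eqref{PLK} to show $\var(e^{t\Delta}f)\le C_1\var f$, combined with the preceding lemma. Your explicit treatment of the domain issues (smoothing of $e^{t\vec\Delta}$ into $\dom\vec\Delta\subset\dom\partial^*$, and the use of $e^{t\Delta}\mathbf 1=\mathbf 1$) is slightly more detailed than the paper's, but the argument is the same.
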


\begin{proof}
Let $f \in BV(X)$ and $\eta  \in \dom \partial^*,  \| \eta \|_{\mathcal{H}_x}  \leq 1 \text{ almost everywhere}$. We have 
\[
\gen{ e^{t\Delta} f, \partial^* \eta }_2=\gen{\partial e^{t\Delta} f, \eta }_\irt \le  \int \sqrt{\Gamma (e^{t\Delta} f)} \ d\mu.
\]
Since we have in $L^2$, $\lim_{t \to 0} e^{t\Delta} f =f$ , we deduce that
\[
\gen{  f, \partial^* \eta }_2 \le \lim \inf_{t \to 0} \int \sqrt{\Gamma (e^{t\Delta} f)} d\mu, 
\]
and therefore
\[
\var f \le  \lim \inf_{t \to 0} \int \sqrt{\Gamma (e^{t\Delta} f)}d\mu.
\]
By symmetry of $e^{t\Delta}$,
\[
\gen{ e^{t\Delta} f, \partial^* \eta }_2=\gen{ f,e^{t\Delta}  \partial^* \eta }_2=\gen{ f, \partial^* e^{t\vec{\Delta}}     \eta}_2.
\]
Where the last equality is because $\partial^*e^{t\vec\Delta}\eta = e^{t\Delta}\partial^*\eta$ from the same logic as the proof of theorem \ref{intertwining1}. (i.e. \cite[Theorem 3.1]{Shi00}).

We now observe that $e^{t\vec{\Delta}}     \eta \in \dom\vec\Delta\subset  \dom \partial^*,$ and $ \| e^{t\vec{\Delta}}     \eta \|_{\mathcal{H}_x} \leq C_1 \text{ almost everywhere}$. Therefore
\[
\gen{ e^{t\Delta} f, \partial^* \eta }_2 \le C_1 \var f,
\]
which implies
\[
\int \sqrt{\Gamma (e^{t\Delta} f)}d\mu  \le C_1 \var (e^{t\Delta} f) \le C^2_1  \var f,
\]
and of course
\[
\lim \sup_{t \to 0} \int \sqrt{\Gamma (e^{t\Delta} f)}d\mu \le C^2_1 \var f.
\]
\end{proof}

The following inequality is the cornerstone of the section.

\begin{theorem}\label{BVX}
Assume that \eqref{BE-buser} is satisfied with $C_2=0$. Let $f \in BV(X)$. For $ t\ge 0$,
\[
\| P_t f -f \|_1 \le C_1^3 \sqrt{2t} \var f.
\]
\end{theorem}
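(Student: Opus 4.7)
The plan is to bound $\|P_tf-f\|_1$ by duality against bounded test functions, to regularize $f$ via $P_\epsilon f \in \dom\eng$ so that integration by parts becomes available, and to split the resulting estimate into a factor controlled by Theorem \ref{BV Hino} (which will contribute $C_1^2$) and a factor given by a reverse local Poincar\'e inequality for $P_s g$ (which will contribute the remaining $C_1$).

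Concretely, I would start from $\|P_tf-f\|_1 = \sup_{g\in\B_b(X),\,\|g\|_\infty\le 1}|\int g(P_tf-f)\,d\mu|$ and, for $\epsilon>0$, replace $f$ by $P_\epsilon f$. Using the self-adjointness of $P_t$ and writing $P_tg-g = \int_0^t \Delta P_sg\,ds$, I obtain
\[
\int g(P_tP_\epsilon f - P_\epsilon f)\,d\mu = \int_0^t \int P_\epsilon f\cdot\Delta P_sg\,d\mu\,ds = -\int_0^t \langle \partial P_\epsilon f,\partial P_sg\rangle_{\mathcal{H}}\,ds,
\]
where integration by parts is legitimate because $P_\epsilon f\in\dom\eng$ and $P_sg\in\dom\Delta$ for every $s>0$. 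A fiberwise Cauchy--Schwarz inequality then gives $|\langle \partial P_\epsilon f,\partial P_sg\rangle_{\mathcal{H}}| \le \|\sqrt{\Gamma(P_sg)}\|_\infty \int\sqrt{\Gamma(P_\epsilon f)}\,d\mu$.

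The core analytic step is the reverse local Poincar\'e estimate $\|\sqrt{\Gamma(P_sg)}\|_\infty \le C_1\|g\|_\infty/\sqrt{2s}$. I would derive it by combining two ingredients. The first is the interpolation identity $P_sg^2 - (P_sg)^2 = 2\int_0^s P_u\Gamma(P_{s-u}g)\,du$, verified by differentiating $u\mapsto P_u((P_{s-u}g)^2)$. The second is the pointwise inequality $\Gamma(P_sg) \le C_1^2 P_u\Gamma(P_{s-u}g)$ for $u\in[0,s]$, obtained from $\partial P_sg = \vec P_u(\partial P_{s-u}g)$ together with the semigroup domination \eqref{PLK} and Cauchy--Schwarz for the Markov operator $P_u$. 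Integrating this inequality in $u$ and using the identity yields $s\Gamma(P_sg) \le \tfrac{C_1^2}{2}(P_sg^2 - (P_sg)^2) \le \tfrac{C_1^2}{2}\|g\|_\infty^2$, giving the bound after taking square roots.

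With the reverse Poincar\'e inequality in hand, the $s$-integral evaluates to $C_1\sqrt{2t}\|g\|_\infty \int\sqrt{\Gamma(P_\epsilon f)}\,d\mu$. Letting $\epsilon\to 0$, the left side converges to $|\int g(P_tf-f)\,d\mu|$ by $L^1$-continuity of the heat semigroup, while Theorem \ref{BV Hino} supplies $\limsup_{\epsilon\to 0}\int\sqrt{\Gamma(P_\epsilon f)}\,d\mu \le C_1^2 \var f$. Taking the supremum over admissible $g$ then yields $\|P_tf-f\|_1 \le C_1\cdot C_1^2\sqrt{2t}\var f = C_1^3\sqrt{2t}\var f$. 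The main obstacle I foresee is the reverse Poincar\'e step, since the semigroup domination on forms does not by itself provide any pointwise control of $\Gamma(P_sg)$ in terms of $\|g\|_\infty$; coupling it with the classical interpolation identity is precisely what manufactures the factor $1/\sqrt{s}$ and absorbs the $L^\infty$-norm of $g$.
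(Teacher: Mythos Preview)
Your argument is essentially the paper's own proof: both derive the reverse Poincar\'e bound $\|\sqrt{\Gamma(P_sg)}\|_\infty \le C_1\|g\|_\infty/\sqrt{2s}$ from the interpolation identity $P_s(g^2)-(P_sg)^2=2\int_0^s P_u\Gamma(P_{s-u}g)\,du$ combined with $\Gamma(P_sg)\le C_1^2 P_u\Gamma(P_{s-u}g)$, then pair with $\partial P_\epsilon f$ by duality and let $\epsilon\to 0$ using Theorem~\ref{BV Hino}. The only cosmetic difference is that the paper first proves $\|P_tf-f\|_1\le C_1\sqrt{2t}\int\sqrt{\Gamma(f)}\,d\mu$ for $f\in\B_b(X)\cap\dom\eng$ and then applies it to $P_sf$, whereas you regularize from the outset.

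One small correction: you justify $\Gamma(P_sg)\le C_1^2 P_u\Gamma(P_{s-u}g)$ via the form domination \eqref{PLK}, but \eqref{PLK} is only stated for $0\le t\le 1$, so your derivation as written covers only $t\le 1$. The hypothesis of the theorem is \eqref{BE-buser} with $C_2=0$, which gives $\sqrt{\Gamma(P_uh)}\le C_1 P_u\sqrt{\Gamma(h)}$ for \emph{all} $u\ge 0$; squaring and using Jensen for the Markov operator $P_u$ yields the needed inequality for every $u\in[0,s]$. Replace the reference to \eqref{PLK} by \eqref{BE-buser} and the argument goes through for all $t\ge 0$ exactly as in the paper.
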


\begin{proof}
We adapt an argument due to Ledoux (see \cite[p. 953]{Le}). Let $f \in  \B_b(X)\cap \dom\eng$. From the Bakry-\'Emery estimate, we have
\[
P_t(f^2)-(P_tf)^2=2\int_0^t P_s ( \Gamma(P_{t-s} f)ds \ge \frac{2}{C_1^2} t \Gamma (P_t f).
\]
The first equality follows from the fact that
\[
\frac{d}{ds}\paren{ P_s(P_{t-s}f)^2} = P_s\Gamma(P_{t-s}f).
\] 
To prove this, let $\phi$ be a positive continuous uniformly bounded function. Then, for any given $0<s\leq t$ $P_s\phi$ and $P_{t-s}f$ are bounded from the previous section. Further, $\Gamma(P_{t-s} f)$ exists for $0\leq s \leq t$ because $P_{t-s}f \in \mathcal B_b(X)\cap \dom\eng$. Thus we have    
\begin{align*}
\frac{d}{ds} \int \phi P_s(P_{t-s}f)^2 \ d\mu & = \frac{d}{ds} \int P_s\phi (P_{t-s}f)^2 \ d\mu \\ 
& = \int (\Delta P_s \phi ) (P_{t-s}f)^2 - 2(P_s\phi)(\Delta P_{t-s}f)(P_{t-s} f) \ d\mu \\
& =  -\eng(P_s\phi, (P_{t-s}f)^2)+2\eng(P_s\phi P_{t-s}f,P_{t-s}f) \\
& = 2\int (P_s \phi) \Gamma(P_{t-s}f) \ d\mu.
\end{align*}
Since this is true for all such $\phi$, this implies that $P_s\Gamma(P_{t-s}f)$ exists and is equal to $\frac{d}{ds}P_s(P_{t-s}f)^2$ where the derivative is taken in $L^2$.

We have thus deduced that
\[
\Gamma (P_t f)\le \frac{C^2_1}{2 t} ( P_t(f^2)-(P_tf)^2).
\]
This implies
\[
\| \sqrt{\Gamma (P_tf)}\|_\infty \le  \frac{C_1}{ \sqrt{2t}} \| f \|_\infty.
\]
Let now $g \in \dom\eng$, with $g\ge 0$
and $||g||_{\infty}\le 1$, and  $f \in \B_b(X)\cap \dom\eng$.  We have
\begin{align*}
\int  g(f-P_tf) d\mu & = \int_0^t \int  g \frac{\partial P_sf}{\partial s}
d\mu ds = \int_0^t \int  g \Delta P_sf d\mu ds  = - \int_0^t \int \Gamma(P_sg,f) d\mu ds
\\
& \le \int_0^t  \| \sqrt{\Gamma(P_sg)} \|_{\infty}\int
\sqrt{\Gamma(f)} d\mu ds  \le  C_1  \sqrt{ 2 t}
\int \sqrt{\Gamma(f)} d\mu.
\end{align*}
By the regularity of $\eng$ and pointwise approximation,  this is true for every bounded Borel $g$ with $\norm{g}_\infty \leq 1$, and thus
\[
\| P_t f -f \|_1 \le C_1  \sqrt{2t}\int \sqrt{\Gamma(f)} d\mu.
\]
Let now $f \in BV(X)$. For $s>0$, we have $P_sf \in \B_b(X)\cap \dom\eng$. Thus we deduce
\[
\| P_{s+t} f -P_sf \|_1 \le  C_1 \sqrt{2t}\int \sqrt{\Gamma(P_sf)} d\mu.
\]
Taking the limit when $s \to 0$ finishes the proof.
\end{proof}

\begin{remark}
If we do not assume $C_2=0$, then the inequality
\[
\| P_t f -f \|_1 \le C_1^3 \sqrt{2t} \var f.
\]
only holds for $0 \le t \le 1$.
\end{remark}

The previous theorem has many implications in terms of Sobolev embedding theorems. Ledoux proves in Section 2 of \cite{Le2} that a $L^1$-bound of the type
\[
\| P_t f -f \|_1 \le C \sqrt{t} \var f,
\]
implies, in very general frameworks, improved Sobolev embeddings involving Besov norms. In particular we obtain  the following result:

 \begin{corollary}\label{Sobo embed}
Assume that \eqref{BE-buser} is satisfied with $C_2=0$ and that $e^{t \Delta}$ has a heat kernel $p_t(x,y)$ that satisfies for some constant $Q >1$,
\begin{align}\label{ultra}
\sup_{x,y \in X, t \in (0,1]} t^{Q/2} p_t(x,y) <+\infty.
\end{align}
Then, there exists a constant $C_Q >0$, such that for every $f \in BV(X)$,
\begin{align}\label{Sobolev}
\| f \|_p \le C_Q (\var f +\|f\|_1)  ,
\end{align}
where $p=\frac{Q}{Q-1}$.
\end{corollary}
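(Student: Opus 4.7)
The plan is to combine Theorem \ref{BVX} with the ultracontractivity hypothesis \eqref{ultra} following the semigroup-based Sobolev embedding mechanism of Ledoux \cite[Section 2]{Le2}. The two key ingredients are: Theorem \ref{BVX} provides the Besov-type bound $\|P_t f - f\|_1 \leq C_1^3\sqrt{2t}\,\var f$ for every $f \in BV(X)$ and $t \in [0,1]$; and \eqref{ultra} yields the $L^1 \to L^\infty$ ultracontractivity $\|P_t\|_{L^1 \to L^\infty} \leq C t^{-Q/2}$ for $t \in (0,1]$.

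For $f \in BV(X)$, I would set $g := |f|$ (so that $\var g \leq \var f$ and $\|g\|_1 = \|f\|_1$) and apply the pointwise decomposition $g \leq |g - P_t g| + P_t g$ at each level $\lambda > 0$. Chebyshev on the first term together with Theorem \ref{BVX}, and the $L^\infty$ bound on the second term (which forces $\{P_t g > \lambda/2\} = \emptyset$ once $C t^{-Q/2}\|f\|_1 \leq \lambda/2$), give, upon choosing $t(\lambda) := (2C\|f\|_1/\lambda)^{2/Q}$ (assumed in $(0,1]$; the complementary regime is absorbed by the $\|f\|_1$ correction), the single-scale weak-type estimate
\[
\mu\{g > \lambda\} \leq \frac{2\, C_1^3 \sqrt{2 t(\lambda)}}{\lambda}\var f \leq C\, \|f\|_1^{1/Q}\,\var f \cdot \lambda^{-(Q+1)/Q}.
\]

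The weak-type exponent $(Q+1)/Q$ obtained at a single scale falls short of the strong $L^{Q/(Q-1)}$ target, and closing this gap is the main obstacle. Following Ledoux \cite[Section 2]{Le2}, I would apply the weak-type bound to the dyadic truncations $g_k := (g - 2^k)_+ \wedge 2^k$ and sum via Young's inequality, using the layer-cake inequality $\sum_k \var g_k \leq C\,\var g$, which in our abstract setting follows from the definition of $\var$ as a supremum over $\eta \in \dom \partial^*$ combined with approximation by simple functions. Equivalently one can invoke the real-interpolation characterization of the heat-semigroup Besov space generated by Theorem \ref{BVX}, identifying it with a Lorentz space on the scale of $L^p$. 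Either route yields the strong bound $\|f\|_p \leq C_Q(\var f + \|f\|_1)$. The $\|f\|_1$ correction on the right is unavoidable because $\mu(X) = 1$ and constants lie in $BV(X)$ with $\var = 0$ but nonzero $L^p$ norm; it arises naturally when one must handle the regime where the optimizing $t(\lambda)$ would otherwise exceed the ultracontractivity range $t \leq 1$.
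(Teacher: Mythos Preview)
Your approach is essentially the same as the paper's: the paper does not give an independent argument but simply observes that Theorem \ref{BVX} supplies the pseudo-Poincar\'e bound $\|P_t f - f\|_1 \le C\sqrt{t}\,\var f$, and then invokes Ledoux \cite[Section~2]{Le2}, where exactly the mechanism you sketch (ultracontractivity plus level-set decomposition) is carried out in a general framework. So you have recovered the paper's proof, with more detail than the paper itself provides.

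Two minor points on your write-up. First, the passage to $g=|f|$ and the claim $\var|f|\le\var f$ are not needed: you can apply the Chebyshev splitting directly to $f$ via $\mu\{|f|>\lambda\}\le\mu\{|f-P_tf|>\lambda/2\}+\mu\{|P_tf|>\lambda/2\}$, using $\|P_tf\|_\infty\le\|P_t|f|\|_\infty\le Ct^{-Q/2}\|f\|_1$ by positivity of $P_t$. Second, your justification of $\sum_k\var g_k\le C\,\var g$ is a bit loose; in this framework the cleanest route is to first take $g\in\dom\eng$ (where locality of $\Gamma$ gives $\Gamma(g_k)=\Gamma(g)\mathbf 1_{\{2^k<g<2^{k+1}\}}$ and hence $\sum_k\int\sqrt{\Gamma(g_k)}\,d\mu=\int\sqrt{\Gamma(g)}\,d\mu$, then use Lemma~3.1), and then pass to general $g\in BV(X)$ by the semigroup approximation in Theorem \ref{BV Hino}.
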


\begin{remark}
It is consequence of the celebrated Varopoulos'  theorem that the heat kernel bound \eqref{ultra} alone implies the Sobolev inequality
\begin{align*}
\| f \|_p \le C_Q (\eng (f,f) +\|f\|_2)  ,
\end{align*}
where $p=\frac{2Q}{Q-1}$. The assumptions $C_1<+\infty, C_2=0$ are therefore used to improve this inequality into \eqref{Sobolev}.
\end{remark}

\subsection{Isoperimetric inequality}

The inequality \eqref{Sobolev} obviously has an isoperimetric flavor when applied to $f=1_E$, where $E$ is a Caccioppoli set.  Actually, adapting some beautiful ideas of Varopoulos (see \cite{Varo}, pp.256-258), Ledoux
(see pp. 22 in \cite{ledoux-bourbaki}, see also Theorem 8.4 in \cite{ledoux-stflour}) and \cite{BBnon} yields:

\begin{theorem}[Isoperimetric inequality]\label{T:iso} 
Assume that $e^{t \Delta}$ has a heat kernel $p_t(x,y)$ that satisfies for some constant $Q >1$,
\begin{align}\label{ultra2}
\sup_{x,y \in X, t \in (0,1]} t^{Q/2} p_t(x,y) <+\infty.
\end{align}
There exist constants $C_{\emph{iso}}, \mu_{\emph{max}} >0$, such that for every
Caccioppoli set $E\subset X$ with $\mu(E) \le  \mu_{\emph{max}}$
\[
\mu(E)^{\frac{Q-1}{Q}} \le C_{\emph{iso}} P (E).
\]
\end{theorem}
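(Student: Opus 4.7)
The plan is to apply the small-time version of Theorem \ref{BVX} (valid for $0 \le t \le 1$ by the remark following it, with no restriction on $C_2$) to $f = \mathbf{1}_E$, and then optimize using the ultracontractive estimate \eqref{ultra2}, following the Varopoulos--Ledoux scheme. Since $E$ is Caccioppoli, $\mathbf{1}_E \in BV(X)$ with $\var \mathbf{1}_E = P(E)$, so
\[
\| P_t \mathbf{1}_E - \mathbf{1}_E \|_1 \le C_1^3 \sqrt{2t}\, P(E), \quad 0 \le t \le 1.
\]

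First I would rewrite the left-hand side as a heat-content quantity. Since $\mathbf{1}_E$ is $\{0,1\}$-valued and $\mu$ is invariant under $P_t$ (so $\int_X P_t \mathbf{1}_E\, d\mu = \mu(E)$), splitting the integral over $E$ and $X\setminus E$ gives
\[
\| P_t \mathbf{1}_E - \mathbf{1}_E \|_1 \;=\; 2\mu(E) - 2 \int_E \int_E p_t(x,y)\, d\mu(y)\, d\mu(x).
\]
The ultracontractive bound \eqref{ultra2} yields a constant $M$ with $p_t(x,y) \le M t^{-Q/2}$ for $t \in (0,1]$, so the double integral is at most $M t^{-Q/2} \mu(E)^2$. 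Combining,
\[
2\mu(E)\bigl(1 - M t^{-Q/2}\mu(E)\bigr) \;\le\; C_1^3 \sqrt{2t}\, P(E).
\]

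Finally I would optimize in $t$ by choosing $t = (2M\mu(E))^{2/Q}$, which makes $M t^{-Q/2} \mu(E) = 1/2$ and reduces the left side to $\mu(E)$. This $t$ lies in $(0,1]$ precisely when $\mu(E) \le \mu_{\emph{max}} := 1/(2M)$, and substituting gives
\[
\mu(E) \;\le\; C_1^3 \sqrt{2}\,(2M)^{1/Q}\, \mu(E)^{1/Q}\, P(E),
\]
which rearranges to the claimed inequality with $C_{\emph{iso}} = C_1^3 \sqrt{2}\,(2M)^{1/Q}$. The argument is routine once Theorem \ref{BVX} is available; the only delicate point is that in its general form Theorem \ref{BVX} yields the needed $L^1$-bound only for $t \le 1$, which is precisely why the smallness hypothesis $\mu(E) \le \mu_{\emph{max}}$ appears in the statement---it ensures that the optimizing $t$ lies in the admissible range.
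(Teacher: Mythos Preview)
Your proof is correct and follows essentially the same Varopoulos--Ledoux scheme as the paper: use the $L^1$ bound $\|P_t\mathbf 1_E-\mathbf 1_E\|_1\le C_1^3\sqrt{2t}\,P(E)$ for $t\in(0,1]$, rewrite the left side as a heat-content quantity, control it via the ultracontractive bound, and optimize in $t$. The only cosmetic difference is that the paper passes through $\int_E P_t\mathbf 1_E\,d\mu=\|P_{t/2}\mathbf 1_E\|_2^2$ and bounds this via the diagonal $p_t(x,x)$, whereas you bound the double integral $\int_E\int_E p_t(x,y)\,d\mu\,d\mu$ directly using the uniform off-diagonal estimate; both lead to the same inequality $\mu(E)\le B\sqrt{t}\,P(E)+Ct^{-Q/2}\mu(E)^2$ and the same choice $t\sim\mu(E)^{2/Q}$.
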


\begin{proof}
Let $f \in BV(X)$. From the proof of Theorem \ref{BVX}, one has for $ 0 \le t\le 1$,
\[
\| P_t f -f \|_1 \le C_1^3 \sqrt{2t} \var f.
\]
Therefore, if $E$ is a Caccioppoli set
\[ \|P_t \mathbf 1_E -
\mathbf 1_E\|_{1} \le   C_1^3 \sqrt{2t} 
\var  (\mathbf 1_E) =  C_1^3 \sqrt{2t} \ P(E),\ \
\]
Observe now that, because $P_E\mathbf 1_E \leq 1$ on $E$ and  we have
\begin{align*}
||P_t \mathbf 1_E - \mathbf 1_E||_{1}   =& \int_E (1-P_t 1_E) d\mu + \int_{E^c} P_t(1_E) d\mu\\
                              =& \int_E (1-P_t 1_E) d\mu + \int_E(P_t 1_{E^c}) d\mu\\
                              =& 2 \left( \mu(E)- \int_E P_t (1_E) d\mu \right)
\end{align*}
On the other hand, we have
\[
\int_E  P_t \mathbf 1_E d\mu  = \int \left(P_{t/2}\mathbf
1_E\right)^2 d\mu.
\]
We thus obtain
\begin{align}\label{eqn:escapettime}
||P_t \mathbf 1_E - \mathbf 1_E||_{1} = 2 \left(\mu(E) -
\int  \left(P_{t/2}\mathbf 1_E\right)^2 d\mu\right).
\end{align}
We now note that 
\begin{align*}
\int (P_{t/2} \mathbf 1_E)^2 d\mu & \le \left(\int_E
\left(\int p_{t/2}(x,y)^2
d\mu(y)\right)^{\frac{1}{2}}d\mu(x)\right)^2
\\
& = \left(\int_E p_t(x,x)^{\frac{1}{2}}d\mu(x)\right)^2 \le
\frac{A}{t^{Q/2}} \mu(E)^2.
\end{align*}
for some constant $A>0$.
Combining these equations yields
\[
\mu(E)  \le   B  \sqrt{ t} \ P(E) +
\frac{C}{t^{Q/2}} \mu(E)^2,\ \ \ \  0 <t \le 1,
\]
for some positive constants $B,C$. Applying the inequality at $t=D \mu(E)^{2/Q}$ where $D$ is large enough concludes the proof.
\end{proof}

\begin{remark}
We note that we do not assume $C_2=0$ in the theorem.
\end{remark}
\subsection{Buser's isoperimetric inequality}

In the context of  a smooth compact  Riemannian manifold with Riemannian measure $\mu$, Cheeger  \cite{Ch}  introduced   the following isoperimetric constant
\[
h=\inf \frac{\mu (\partial A)}{\mu (A)},
\]
where the infimum runs over all open subsets $A$ with smooth boundary $\partial A$ such that $\mu(A)\le \frac{1}{2}$. Cheeger's constant can be used to bound from below the first non zero eigenvalue of the manifold. Indeed, it is proved in \cite{Ch} that
\[
\lambda_1 \ge \frac{h^2}{4}.
\]

Buser \cite{Bu} then proved that if the Riemannian Ricci curvature of the manifold is non-negative, then we actually have
\[
\lambda_1 \le C h^2
\]
where $C$ is a universal constant depending only on the dimension. Buser's inequality was reproved by Ledoux \cite{Le} using heat semigroup techniques. Under proper assumptions, by using the tools we introduced, Ledoux' technique can essentially reproduced in our general framework of Dirichlet spaces.

In this section, we assume that $\eng$ satisfies a spectral gap inequality and that \eqref{BE-buser} is satisfied with $C_2=0$. We define the Cheeger's constant of $X$ by
\[
h=\inf \frac{P(E)}{\mu(E)}
\]
where the infimum runs over all Caccioppoli sets $E$ such that $\mu(E)\le \frac{1}{2}$. We denote by $\lambda_1$ the spectral gap of $\Delta$.

\begin{theorem}\label{buser}
\[
\lambda_1 \le C_{\text{buser}} h^2,
\]
where $C_{\text{buser}}$ is a constant depending on $C_1$ only.
\end{theorem}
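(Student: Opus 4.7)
The plan is to adapt Ledoux's classical heat-semigroup proof of Buser's inequality to this Dirichlet-space setting. The two key inputs are already in hand: Theorem \ref{BVX}, which with $C_2 = 0$ gives the $L^1$-bound
\[
\|P_t \mathbf 1_E - \mathbf 1_E\|_1 \le C_1^3 \sqrt{2t}\, P(E)
\]
for every Caccioppoli set $E$ and every $t \ge 0$; and the spectral-gap hypothesis, which via the spectral theorem yields the $L^2$-contraction $\|P_t g\|_2 \le e^{-\lambda_1 t}\|g\|_2$ for every mean-zero $g \in L^2(X,\mu)$.

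Fix a Caccioppoli set $E$ with $\mu(E) \le \tfrac12$ and set $g := \mathbf 1_E - \mu(E)$. Then $g$ is mean-zero, $\|g\|_\infty \le 1$, and $\|g\|_2^2 = \mu(E)(1-\mu(E)) \ge \mu(E)/2$. Because $P_t$ fixes constants, $g - P_t g = \mathbf 1_E - P_t \mathbf 1_E$, so Theorem \ref{BVX} reads
\[
\|g - P_t g\|_1 \le C_1^3 \sqrt{2t}\, P(E).
\]
For a matching lower bound, the spectral-gap contraction gives $\|g - P_t g\|_2 \ge (1 - e^{-\lambda_1 t})\|g\|_2$, while Markovianity of $P_t$ on $L^\infty$ ensures $\|g - P_t g\|_\infty \le 2$. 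The elementary interpolation $\|h\|_2^2 \le \|h\|_\infty \|h\|_1$ applied to $h = g - P_t g$ then yields
\[
\|g - P_t g\|_1 \;\ge\; \tfrac{1}{2}\|g - P_t g\|_2^2 \;\ge\; \tfrac{1}{4}(1 - e^{-\lambda_1 t})^2\, \mu(E).
\]

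Chaining the upper and lower bounds on $\|g - P_t g\|_1$ produces
\[
\frac{P(E)}{\mu(E)} \;\ge\; \frac{(1 - e^{-\lambda_1 t})^2}{4 C_1^3 \sqrt{2t}}
\]
for every $t > 0$. Choosing $t = 1/\lambda_1$ (permissible precisely because, under $C_2 = 0$, Theorem \ref{BVX} holds for all $t \ge 0$) gives $P(E)/\mu(E) \ge c\sqrt{\lambda_1}$ with $c$ depending only on $C_1$; taking the infimum over $E$ delivers $h \ge c\sqrt{\lambda_1}$, i.e.\ $\lambda_1 \le C_{\text{buser}} h^2$ with $C_{\text{buser}} = c^{-2}$. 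There is no real obstacle in this argument: the only point worth emphasizing is that the freedom to optimize over all $t > 0$ (in particular to take $t$ as large as $1/\lambda_1$, which may exceed $1$) depends crucially on the $C_2 = 0$ form of the $L^1$-bound, and it is this that removes any dimensional or scaling restrictions from the proof.
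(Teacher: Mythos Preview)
Your proof is correct and follows the same Ledoux heat-semigroup strategy as the paper: combine the $L^1$-bound of Theorem~\ref{BVX} with the spectral-gap decay and optimize in $t$. The one minor difference is in the lower bound on $\|\mathbf 1_E - P_t\mathbf 1_E\|_1$: the paper uses the exact identity $\|\mathbf 1_E - P_t\mathbf 1_E\|_1 = 2\bigl(\mu(E)-\|P_{t/2}\mathbf 1_E\|_2^2\bigr)$, which produces a linear factor $(1-e^{-\lambda_1 t})$ and a slightly sharper constant, whereas your interpolation $\|\cdot\|_2^2 \le \|\cdot\|_\infty\|\cdot\|_1$ yields the squared factor $(1-e^{-\lambda_1 t})^2$; both lead to the same conclusion after choosing $t = 1/\lambda_1$.
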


\begin{proof}
Let $A$ be a Caccioppoli  set with finite perimeter. By symmetry and stochastic completeness of the semigroup, we have from equation (\ref{eqn:escapettime})
\begin{align*}
\Vert 1_A - P_t 1_A \Vert_1 
                              =& 2  \left( \mu(A)- \Vert P_\frac{t}{2} (1_A) \Vert_2^2 \right).
\end{align*}
By Theorem \ref{BVX}, we have
\[
\| P_t 1_A -1_A \|_1 \le C_1^3 \sqrt{2t} P(A).
\]
We deduce that
\[
\mu(A) \le C_1^3 \sqrt{\frac{t}{2}} P(A)+ \Vert P_\frac{t}{2} (1_A) \Vert_2^2.
\]
Now, by spectral theorem,
\[
\Vert P_\frac{t}{2} (1_A) \Vert_2^2=\mu(A)^2 + \Vert P_\frac{t}{2} (1_A-\mu(A)) \Vert_2^2 \le \mu(A)^2 +e^{-\lambda_1 t} \| 1_A-\mu(A) \|_2^2
\]
This yields
\[
\mu(A) \le C_1^3 \sqrt{\frac{t}{2}} P(A)+ \mu(A)^2 +e^{-\lambda_1 t} \| 1_A-\mu(A) \|_2^2.
\]
Equivalently, one obtains
\[
C_1^3 \sqrt{\frac{t}{2}} P(A) \ge \mu(A) (1 -\mu(A))(1-e^{-\lambda_1 t}).
\]
Therefore,
\[
h \ge \frac{1}{C_1^3 \sqrt{2}} \sup_{t >0} \left( \frac{1-e^{-\lambda_1 t}}{\sqrt{t}} \right).
\]
We conclude
\[
h^2 \ge \frac{1}{2C_1^6 } (1-e^{-1})^2  \lambda_1.
\]
\end{proof}

Let us observe that it is known that the Cheeger lower bound on $\lambda_1$ may be obtained under further assumptions on the Dirichlet space $(X,d,\eng)$. Indeed, assume that Lipschitz functions are in the domain of $\eng$ and that $\sqrt{\Gamma(f)}$ is an upper gradient in the sense that for any Lipchitz function $f$,
\[
\sqrt{\Gamma(f)}(x) =\lim \sup_{d(x,y)\to 0} \frac{ |f(x)-f(y)|}{d(x,y)}.
\]
In that case, if $A$ is a closed set of $X$, one defines its Minkowski exterior boundary measure by
\[
\mu_+(A)=\lim \inf_{\varepsilon \to 0} \frac{1}{\varepsilon} \left( \mu(A_\varepsilon) -\mu(A) \right),
\]
where $A_\varepsilon=\{ x \in X, d(x,X) <\varepsilon \}$. We can then define the second Cheeger's constant of $X$ by
\[
h_+=\inf \frac{\mu_+(E)}{\mu(E)}
\]
where the infimum runs over all closed sets $E$ such that $\mu(E)\le \frac{1}{2}$. Then, according to Theorem 8.5.2 in \cite{BGL}, one has
\[
\lambda_1 \ge \frac{h^2_+}{4}.
\]

\subsection{Ledoux isoperimetric inequality}

In this section, we assume that $\eng$ satisfies a log-Sobolev inequality and that \eqref{BE-buser} is satisfied with $C_2=0$. We define the Gaussian isoperimetric constant of $X$ by
\[
k=\inf \frac{P(E)}{\mu(E)\sqrt{-\ln \mu(E)}}
\]
where the infimum runs over all Caccioppoli sets $E$ such that $\mu(E)\le \frac{1}{2}$. We denote by $\rho_0$ the log-Sobolev constant of $X$, that is the best constant in the inequality \eqref{log Sob}.

\begin{theorem} 

\[
\rho_0 \le C_{\text{ledoux}} k^2
\]
where $C_{\text{ledoux}}$ is a constant depending on $C_1$ only.
\end{theorem}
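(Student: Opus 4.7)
The plan is to follow Ledoux's semigroup strategy used in Theorem \ref{buser}, replacing the spectral-gap estimate by the hypercontractivity of $P_t$ furnished by the log-Sobolev inequality \eqref{log Sob}. Throughout, let $A$ denote a Caccioppoli set with $\mu(A) \le 1/2$.

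First, I would recycle the identity derived in the proof of Theorem \ref{buser}. By the symmetry of $P_t$ together with stochastic completeness,
\[
\|P_t \mathbf 1_A - \mathbf 1_A\|_1 = 2\bigl(\mu(A) - \|P_{t/2} \mathbf 1_A\|_2^2\bigr),
\]
while Theorem \ref{BVX} applied to $f=\mathbf 1_A$ (using $\var(\mathbf 1_A)=P(A)$) gives $\|P_t \mathbf 1_A - \mathbf 1_A\|_1 \le C_1^3 \sqrt{2t}\,P(A)$. Combining,
\[
\mu(A) - \|P_{t/2} \mathbf 1_A\|_2^2 \le \tfrac{C_1^3}{\sqrt{2}}\sqrt{t}\,P(A).
\]
Next, I would invoke Gross's hypercontractivity theorem: the log-Sobolev inequality \eqref{log Sob} with constant $\rho_0$ yields $\|P_t f\|_{q(t)} \le \|f\|_2$ with $q(t) = 1 + e^{c\rho_0 t}$ for an explicit $c>0$. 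Applied to $f = \mathbf 1_A$, together with the self-adjointness of $P_{t/2}$ and H\"older's inequality,
\[
\|P_{t/2}\mathbf 1_A\|_2^2 = \int P_t \mathbf 1_A \cdot \mathbf 1_A\,d\mu \le \|P_t\mathbf 1_A\|_{q(t)}\cdot \mu(A)^{1-1/q(t)} \le \mu(A)^{\alpha(t)},
\]
where $\alpha(t) := \tfrac{3}{2} - 1/(1+e^{c\rho_0 t})$. One verifies $\alpha(0)=1$ and $\alpha(t)-1 \sim \tfrac{c}{4}\rho_0\,t$ as $t\to 0^+$.

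Substituting the second estimate into the first gives the key inequality
\[
\mu(A)\bigl(1 - \mu(A)^{\alpha(t)-1}\bigr) \le \tfrac{C_1^3}{\sqrt{2}}\sqrt{t}\,P(A).
\]
Optimization proceeds by choosing $t = t(\mu(A))$ so that $(\alpha(t)-1)|\ln\mu(A)|$ equals a fixed small positive constant, say $\tfrac14$. For this choice $1 - \mu(A)^{\alpha(t)-1} = 1 - e^{-1/4}$ is a positive universal constant, while $\sqrt{t} \asymp 1/\sqrt{\rho_0|\ln\mu(A)|}$ by the small-$t$ asymptotics of $\alpha$. This yields
\[
\mu(A)\sqrt{\rho_0|\ln\mu(A)|} \le C\,P(A),
\]
with $C$ depending only on $C_1$. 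Taking the infimum over Caccioppoli sets $A$ with $\mu(A)\le 1/2$ gives $\sqrt{\rho_0}\le C\,k$, i.e.\ $\rho_0 \le C_{\text{ledoux}}\,k^2$ with $C_{\text{ledoux}} = C^2$.

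The main obstacle is the uniformity of the optimization across all admissible $A$. The linearization $\alpha(t)-1 \asymp \rho_0 t$ is clean only for small $t$, which corresponds to $|\ln\mu(A)|$ large, i.e.\ $\mu(A)$ small. For $\mu(A)$ in an intermediate range like $[e^{-2},1/2]$, one either fixes a universal $t_0$ of order $1/\rho_0$ and exploits the boundedness of $|\ln\mu(A)|$ there, or simply invokes Theorem \ref{buser} (log-Sobolev implies a spectral gap, so Buser's inequality is available and dominates $k^2$ by $k^2 \sqrt{|\ln \mu(A)|}^2 \gtrsim h^2$ on this range). Careful bookkeeping of the constant $c$ in Gross's theorem, given the normalization of $\rho_0$ adopted in \eqref{log Sob}, is also required to make $C_{\text{ledoux}}$ fully explicit in terms of $C_1$ alone.
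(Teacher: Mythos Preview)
Your proposal is correct and follows essentially the same route as the paper: both start from the identity $\|P_t\mathbf 1_A-\mathbf 1_A\|_1=2(\mu(A)-\|P_{t/2}\mathbf 1_A\|_2^2)$ combined with Theorem~\ref{BVX}, then control $\|P_{t/2}\mathbf 1_A\|_2^2$ via Gross hypercontractivity and optimize in $t$. The only cosmetic difference is that the paper applies hypercontractivity in the backward direction, $\|P_{t/2}\mathbf 1_A\|_2\le\|\mathbf 1_A\|_{p(t)}$ with $p(t)=1+e^{-2\rho_0 t}<2$, whereas you apply it forward ($L^2\to L^{q(t)}$) and then use H\"older with $\mathbf 1_A$; both yield an exponent on $\mu(A)$ behaving like $1+c\rho_0 t$ for small $t$, and your explicit optimization together with the remark on the range $\mu(A)\in[e^{-2},1/2]$ reproduces what the paper defers to Ledoux's computation on p.~956 of \cite{Le}.
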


\begin{proof}
Let $A$ be a Caccioppoli  with finite perimeter.  From the proof of Theorem \ref{buser}, we have
\[
\mu(A) \le C_1^3 \sqrt{\frac{t}{2}} P(A)+ \Vert P_\frac{t}{2} (1_A) \Vert_2^2.
\]
Now we can use the hypercontractivity constant to bound $\Vert P_\frac{t}{2} (1_A) \Vert_2^2$. Indeed, from Gross' theorem it is well known  that the logarithmic Sobolev inequality  
\[
\int f^2 \ln f^2 d\mu -\int f^2 d\mu \ln  \int f^2 d\mu \le \frac{1}{\rho_0}  \int \Gamma(f) d\mu,
\]
is equivalent to hypercontractivity property
$$
\Vert P_t f \Vert_q \leq \Vert f \Vert_p 
$$
 for all $f$ in $L^p$ whenever $1<p<q<\infty$ and $e^{\rho_0 t}\geq \sqrt \frac{q-1}{p-1}$.

Therefore, with $p(t)= 1+e^{-2\rho_0 t}<2$, we get,
\begin{align*}
C_1^3  \sqrt{2t} P(A) \geq & 2  \left( \mu(A)- \mu(A)^\frac{2}{p(t)} \right)\\
          &\geq  2 \mu(A) \left(1- \mu(A)^\frac{1-e^{2-\rho_0 t}}{1+e^{-2\rho_0 t}} \right).
\end{align*}
By using then the computation page 956 in \cite{Le}, one deduces that if $A$ is a set which has a finite perimeter $P(A)$ and such that $0\leq \mu(A)\leq \frac{1}{2}$, then 
$$
P( A)\geq \tilde{C} \sqrt{ \rho_0}   \mu(A)\left(\ln \frac{1}{\mu(A)} \right)^\frac{1}{2},
$$
where $\tilde{C}$ is a constant depending on $C_1$ only.
\end{proof}

\section{Poincar\'e Duality  on Hino index-1 spaces}

In this section, we come back to the general framework of Section 2.1. Our goal is to construct a scalarization of the closed symmetric form
\[
\vec{\eng} (\omega, \eta)=\langle \partial^* \omega, \partial^* \eta \rangle_2.
\]
This can be achieved on Hino index-1 spaces where one-forms may be identified with functions. In such spaces, we will prove that the semigroup domination
 \begin{align*}
 \| \vec{P}_t \eta \|_{\mathcal{H}_x} \le  C_1 (P_t  \| \eta \|_{\mathcal{H}_\cdot} )(x), \quad \eta \in \mathcal{H}, \quad  0 \le t \le 1,
\end{align*}
is then equivalent to a semigroup domination
\[
| e^{t \Delta^\perp} f | (x)\le C_1 e^{t \Delta} |f | (x), \quad 0 \le t \le 1.
\]
where $\Delta^\perp$ is a self adjoint operator on $L^2(X)$ that we call Poincar\'e dual of $\Delta$. We stress that $\Delta^\perp$, in general, is not Markovian, that is the semigroup $e^{t \Delta^\perp}$ is not positivity preserving.

\subsection{Poincar\'e duality}



We first recall the following definition.

\begin{defn}[Definition 2.9 in \cite{Hin10}]
The pointwise Hino index $p(x)$ of $(\eng,\dom\eng)$ is the function such that
\begin{enumerate}[(a)]
\item For any $N \in \N$ and $f_1,\ldots, f_N \in \dom\eng$ the rank of the $N\times N$ matrix with entries $Z_{ij}:=\Gamma (f_i,f_j)$ is less than $p(x)$ almost everywhere.
\item If $p'(x)$ is another function which satisfies (a), then $p(x) \leq p'(x)$ almost everywhere. 
\end{enumerate}
The essential supremum of $p(x)$ with respect to $\mu$ is referred to as the Hino index of $(\eng,\dom\eng)$. 
\end{defn}

\begin{prop}[Lemma 3.2 in \cite{Hin13}]\label{prop:HinoIndex}
If $p(x)$ is the pointwise Hino index of $(\eng,\dom\eng)$ then $p(x) = \dim\irt_x$ almost everwhere.
\end{prop}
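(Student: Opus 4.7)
The plan is to establish the two inequalities $\dim\irt_x \le p(x)$ and $p(x) \le \dim\irt_x$ (both $\mu$-a.e.) separately, after making the description of $\irt_x$ concrete in terms of the countable set $\A$. As a preliminary, I would observe that in the fiber, every simple tensor $f \otimes g$ from $\A \otimes \B_b(X)$ is equivalent to $(g(x) f) \otimes \mathbf 1$, because their difference has zero fiberwise seminorm and $g(x) f \in \A$. Hence the pre-Hilbert fiber is isomorphic to the algebraic quotient $\A/N_x$, where $N_x := \{f \in \A : \Gamma(f,f)(x) = 0\}$. Enumerating $\A = \{f_n\}_{n\ge 1}$, the standard linear-algebra identification of the rank of a symmetric positive semidefinite Gram matrix with the codimension of its kernel gives
\[
\dim \irt_x \;=\; \dim(\A/N_x) \;=\; \sup_{N\in\N} \mathrm{rank}\bigl[\Gamma(f_i, f_j)(x)\bigr]_{i,j \le N},
\]
where the Hilbert completion does not alter the dimension under the usual convention (separable pre-Hilbert of dimension $d \le \aleph_0$ has completion of the same Hilbert dimension).

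For the inequality $\dim\irt_x \le p(x)$: applying condition (a) of the Hino-index definition to the finite family $f_1,\ldots,f_N \in \A \subset \dom\eng$, the rank of $[\Gamma(f_i,f_j)(x)]_{i,j\le N}$ is at most $p(x)$ off a $\mu$-null set $\mathcal N_N$. Since there are only countably many values of $N$, the union $\bigcup_N \mathcal N_N$ is still null; outside of it, the supremum in the previous display yields $\dim\irt_x \le p(x)$.

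For the reverse inequality $p(x) \le \dim\irt_x$, I would show that the function $p'(x) := \dim \irt_x$ itself verifies condition (a), so that the minimality clause (b) forces $p(x) \le p'(x)$ a.e. Given any $f_1, \ldots, f_N \in \dom\eng$, regularity of $\eng$ together with $\eng$-density of $\A$ in $\diralg$ yields sequences $f_i^{(n)} \in \A$ with $f_i^{(n)} \to f_i$ in $\eng$-norm. Cauchy--Schwarz for the carr\'e du champ, $|\Gamma(u,v) - \Gamma(u',v')| \le \sqrt{\Gamma(u-u')}\sqrt{\Gamma(v)} + \sqrt{\Gamma(u')}\sqrt{\Gamma(v-v')}$, then implies $\Gamma(f_i^{(n)}, f_j^{(n)}) \to \Gamma(f_i, f_j)$ in $L^1(\mu)$ for each of the $N^2$ pairs; passing to a common subsequence we obtain pointwise convergence $\mu$-a.e. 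Since the rank function on symmetric matrices is lower semicontinuous (the set $\{A : \mathrm{rank}(A) \ge k\}$ is open, being defined by non-vanishing of some $k\times k$ minor),
\[
\mathrm{rank}\bigl[\Gamma(f_i,f_j)(x)\bigr] \;\le\; \liminf_{n\to\infty} \mathrm{rank}\bigl[\Gamma(f_i^{(n)},f_j^{(n)})(x)\bigr] \;\le\; \dim\irt_x
\]
for a.e. $x$, where the last inequality uses that the approximants lie in $\A$ combined with the first inequality already proved.

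The main technical point is this last density step, where condition (a) must be upgraded from the countable $\eng$-dense set $\A$ to all of $\dom\eng$; it rests on the lower semicontinuity of matrix rank and on $L^1$-continuity of $\Gamma$ under $\eng$-convergence. Both are mild, so the argument is essentially linear-algebraic in nature once the fiber $\irt_x$ is identified with the algebraic quotient $\A/N_x$.
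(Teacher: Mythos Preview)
The paper does not supply its own proof of this proposition: it is stated as a citation to Hino (Lemma 3.2 in \cite{Hin13}) and left unproved. So there is nothing in the paper to compare against directly.

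Your argument is correct and is essentially the standard one. The key observations---that in the fiber every simple tensor $f\otimes g$ collapses to $g(x)f\otimes\mathbf 1$, so that $\irt_x$ is (algebraically) $\A/N_x$ with $N_x=\{f\in\A:\Gamma(f)(x)=0\}$; that the dimension of this quotient is the supremum of the ranks of the finite Gram matrices built from the spanning set of $\A$; and that lower semicontinuity of rank lets you pass the bound from $\A$ to all of $\dom\eng$ via $\eng_1$-approximation---are exactly what is needed. Two small remarks: first, the paper defines $\irt_x$ as the algebraic quotient without completion, so your parenthetical about Hilbert completion preserving dimension is not actually required here. Second, in your density step you need $\A$ to be $\eng_1$-dense in all of $\dom\eng$, not just in $\diralg$; this follows because the paper already records that $\diralg$ is dense in $\dom\eng$ by regularity, and $\A$ is chosen dense in $\diralg$, so the composition gives what you need. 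With that understood, the proof is complete.
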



For $\omega \in \irt$, define $\nu_\omega$ to be the measure on $X$ such that for $\phi\in C_b(X)$
\[
\int_X \phi \ d\nu_\omega = \gen{\omega \cdot \phi,\omega}_\mathcal{H}.
\]
The following lemma is then trivial.

\begin{lemma}
There exists $\omega \in \mathcal{H}$ such that $\mu=\nu_\omega$ if and only if there exists $\omega \in \mathcal{H}$ such that $\| \omega \|_{\mathcal{H}_x}=1$, $\mu$-a.e.
\end{lemma}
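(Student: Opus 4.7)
The plan is to unwind the definition of $\nu_\omega$ using the fiberwise decomposition $\mathcal{H}=\int_X^\oplus \mathcal{H}_x\, d\mu(x)$ stated earlier in the section, and observe that the statement becomes an immediate identification of densities. In fact I expect that the same $\omega$ witnesses both sides of the equivalence, so the ``if and only if'' is really an equality of two reformulations of a single pointwise condition.

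First I would fix $\omega\in\mathcal{H}$ and $\phi\in C_b(X)$, and compute $\langle\omega\cdot\phi,\omega\rangle_{\mathcal{H}}$ via the integral decomposition. Using the compatibility of the module action with the fiber inner product, namely $\langle a_1\eta_1,a_2\eta_2\rangle_{\mathcal{H}_x}=a_1(x)a_2(x)\langle\eta_1,\eta_2\rangle_{\mathcal{H}_x}$, I obtain
\[
\langle \omega\cdot\phi,\omega\rangle_{\mathcal{H}} \;=\; \int_X \phi(x)\,\langle \omega,\omega\rangle_{\mathcal{H}_x}\, d\mu(x)\;=\;\int_X \phi(x)\,\|\omega\|_{\mathcal{H}_x}^2\, d\mu(x).
\]
Since this identity holds for all $\phi\in C_b(X)$ and $\|\omega\|_{\mathcal{H}_\cdot}^2$ is an $L^1(\mu)$ function (its $\mu$-integral equals $\|\omega\|_{\mathcal{H}}^2<\infty$), this means precisely that $\nu_\omega$ is absolutely continuous with respect to $\mu$ with Radon--Nikodym derivative
\[
\frac{d\nu_\omega}{d\mu}(x)\;=\;\|\omega\|_{\mathcal{H}_x}^2 \qquad \mu\text{-a.e.}
\]

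From this density formula both directions follow at once. If some $\omega\in\mathcal{H}$ satisfies $\nu_\omega=\mu$, then $\|\omega\|_{\mathcal{H}_x}^2=1$ $\mu$-almost everywhere, hence $\|\omega\|_{\mathcal{H}_x}=1$ $\mu$-a.e. Conversely, if $\|\omega\|_{\mathcal{H}_x}=1$ $\mu$-a.e., the same identity gives $\int_X\phi\,d\nu_\omega=\int_X\phi\,d\mu$ for all $\phi\in C_b(X)$, and since $\mu$ is a locally finite Borel measure on the locally compact separable metric space $(X,d)$, the class $C_b(X)$ is sufficient to separate Borel measures, so $\nu_\omega=\mu$.

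The only point requiring any care, rather than an obstacle, is checking the Radon--Nikodym step — verifying that testing against $C_b(X)$ truly characterizes $\nu_\omega$ among Borel measures; this is standard but worth flagging since $\nu_\omega$ is a priori only defined through integration against bounded continuous functions. Everything else is a direct consequence of the measurable field structure recorded just above the lemma.
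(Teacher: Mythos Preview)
Your argument is correct and is exactly the ``trivial'' verification the paper has in mind: the paper does not give a proof, simply declaring the lemma trivial after introducing $\nu_\omega$ and the fiberwise decomposition, and your computation of $d\nu_\omega/d\mu=\|\omega\|_{\mathcal{H}_x}^2$ is precisely that unpacking. The care you take with the measure-determining property of $C_b(X)$ is more than the paper bothers with, but it is the right point to flag.
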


We now set the following definition of the Hodge star operator on Hino index-1 spaces.

\begin{defn}
Assume that $(\eng,\dom\eng)$ has Hino index 1 and that $\mu=\nu_\omega $ for some  $\omega \in \irt$. For $\mu$-almost every $x\in X$, we define the Hodge star operator $\star: L^2(X,\mu) \to \irt = \int^\oplus \irt_x \ d\mu$ by $\star f$ which is defined to be $(\star f)_x : = f(x)\omega_x$ on almost every fiber $\irt_x$ of $\irt$. $\star$ shall also be used to denote the inverse of this map $\star (\omega\cdot f) = f$.
\end{defn} 

Classically, for $n$-dimensional Riemannian manifolds, Poincar\'e duality states the differential $p$ forms are isometric to $n-p$ forms, and this isometry is given by the Hodge star. For 1-dimensional spaces (i.e. the line or the circle), the classical Hodge star provides an isometry between $0$ forms (functions) and $1$ forms. Hence, the following proposition is a measurable version of Poincar\'e duality for 1 dimensional spaces.

\begin{prop}
Assume that $(\eng,\dom\eng)$ has Hino index 1 and that $\mu=\nu_\omega $ for some  $\omega \in \irt$. The operator $\star$ is an isometry both fibre-wise and globally. i.e. $\norm{\star f}_{\irt_x} = |f(x)|$ almost everywhere, and $\norm{\star f}_{\irt} = \norm{f}_2$. Thus $\irt$ is isometric to $L^2(X,\mu)$.
\end{prop}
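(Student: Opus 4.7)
The plan is to verify the fiberwise isometry directly from the definition of $\nu_\omega$ and the module structure on $\irt$, then integrate to obtain the global isometry, and finally invoke the Hino index $1$ hypothesis to upgrade this isometric embedding into an isometric isomorphism onto all of $\irt$.

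First I would unpack the assumption $\mu = \nu_\omega$. By definition of $\nu_\omega$, for every $\phi \in C_b(X)$ one has
\[
\int_X \phi \, d\mu = \int_X \phi \, d\nu_\omega = \langle \omega\cdot\phi,\omega\rangle_{\mathcal{H}} = \int_X \phi(x)\,\|\omega\|_{\irt_x}^2\, d\mu(x),
\]
using the direct integral representation of $\irt$ and the identity $\langle a_1\eta_1,a_2\eta_2\rangle_{\irt,x} = a_1(x)a_2(x)\langle\eta_1,\eta_2\rangle_{\irt_x}$ established earlier. Since this identity holds for every $\phi \in C_b(X)$, regularity of $\mu$ forces $\|\omega\|_{\irt_x} = 1$ for $\mu$-almost every $x$, which is exactly the content of the preceding lemma.

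With this in hand, the fiberwise isometry is immediate: for $f \in L^2(X,\mu)$,
\[
\|\star f\|_{\irt_x} = \|f(x)\,\omega_x\|_{\irt_x} = |f(x)|\,\|\omega_x\|_{\irt_x} = |f(x)| \quad \mu\text{-a.e.}
\]
Integrating against $\mu$ and using the direct integral description of the $\irt$-norm then yields
\[
\|\star f\|_{\irt}^2 = \int_X \|\star f\|_{\irt_x}^2 \, d\mu(x) = \int_X |f(x)|^2 \, d\mu(x) = \|f\|_2^2,
\]
so $\star$ is a global isometric embedding $L^2(X,\mu) \hookrightarrow \irt$.

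The final step, turning the embedding into an isomorphism, is where the Hino index $1$ assumption is essential, and I expect this to be the main conceptual point rather than a computational obstacle. By Proposition~3.3, $\dim \irt_x = 1$ for $\mu$-a.e.\ $x$, and since $\omega_x$ has unit norm it spans the fiber $\irt_x$ a.e. Thus for any $\eta \in \irt$ we may define, measurably in $x$, the unique scalar $f(x)$ with $\eta_x = f(x)\omega_x$; the fiberwise identity $|f(x)| = \|\eta\|_{\irt_x}$ and the direct integral description of $\|\eta\|_{\irt}$ show $f \in L^2(X,\mu)$ with $\star f = \eta$. Hence $\star$ is surjective, completing the proof that $\irt \cong L^2(X,\mu)$ isometrically.
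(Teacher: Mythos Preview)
Your proof is correct and follows essentially the same route as the paper: verify the fiberwise identity using $\|\omega_x\|_{\irt_x}=1$ (which you extract explicitly from $\mu=\nu_\omega$, whereas the paper simply invokes the preceding lemma), then integrate to get the global isometry. Your argument is in fact more complete than the paper's: the paper's proof stops after computing $\langle\star f,\star g\rangle_{\irt,x}=f(x)g(x)$ and its integral, leaving the surjectivity of $\star$ (and hence the claim that $\irt\cong L^2(X,\mu)$) implicit, while you spell out how the Hino index~$1$ hypothesis, via $\dim\irt_x=1$, furnishes the inverse.
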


\begin{proof}
This holds because
\[
\gen{\star f,\star g}_{\irt,x} = \gen{f(x) \cdot \omega_x,g(x)\cdot\omega_x}_{\irt_x} = f(x)g(x)
\]
almost everywhere and
\[
\gen{\star f,\star g}_{\irt} = \int \gen{f\omega ,g\omega}_{\irt_x}^2 \ d\mu(x) = \int f(x)g(x) \ d\mu(x).
\]
\end{proof}

\begin{definition}
Assume that $(\eng,\dom\eng)$ has Hino index 1 and that $\mu=\nu_\omega $ for some  $\omega \in \irt$. The self-adjoint operator
\[
\Delta^\perp=\star \vec{\Delta} \star
\]
will be called the (Poincar\'e) dual operator of $\Delta$. It is the self-adjoint  generator of the closed symmetric form on $L^2(X,\mu)$
\[
\eng^\perp(f,g)=\langle \partial^* \star f , \partial^* \star g\rangle_2.
\]
\end{definition}

\begin{examples}
\begin{enumerate}
\item Let $X=\mathbb{R}$ or $X=\mathbb{S}^1$. Consider the standard Dirichlet form on $X$ which is the closure of 
\[
\mathcal{E}(f,g)=\int_X f'(x)g'(x) dx, \quad f,g \in C_0^\infty(X).
\]
Then, $(\eng^\perp, \dom \eng^\perp)=(\eng,\dom\eng)$ and $\Delta^\perp=\Delta$.
\item Let $X=I$, where $I$ is an interval of $\mathbb{R}$. Denote $(\eng_D,\dom\eng_D)$ the standard Dirichlet form  $\int_X f'(x)g'(x) dx$ with Dirichlet boundary condition, and denote  $(\eng_N,\dom\eng_N)$ the one with Neumann boundary condition. Then,
\[
 (\eng_N^\perp, \dom \eng_N^\perp)=(\eng_D,\dom\eng_D).
\]
Therefore,
\[
 \Delta_N^\perp=\Delta_D.
\]
This duality between the Dirichlet and the Neumann boundary conditions is exceptional --- In general $(\eng^\perp,\dom\eng^\perp)$ is not a Dirichlet form, since it may fail to satisfy the Markovian property, as is the case with the metric graphs in Section 5.1, 5.2 (see in particular the Walsh spider, Example 5.1). 
\end{enumerate}
\end{examples}

We are interested in $(\eng^\perp,\dom\eng^\perp)$ because of the following intertwining property:

\begin{theorem}\label{intertwining p}
Assume that $(\eng,\dom\eng)$ has Hino index 1 and that $\mu=\nu_\omega $ for some  $\omega \in \irt$. For $f \in \dom \eng$,
\[
\star  \partial e^{t\Delta} f=e^{t\Delta^\perp} \star \partial f, \quad t \ge 0.
\]
\end{theorem}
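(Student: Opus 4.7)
The plan is to deduce this from Theorem \ref{intertwining1} by conjugating the one-form semigroup $e^{t\vec\Delta}$ through the Hodge star isometry. Since $\star : L^2(X,\mu) \to \mathcal{H}$ is a unitary isomorphism (by the preceding proposition it is isometric both fiberwise and globally, and it has an evident two-sided inverse, also denoted $\star$, satisfying $\star\star = I$ on each side), and since $\Delta^\perp$ is defined as $\star \vec\Delta \star$, the functional calculus for self-adjoint operators transfers under unitary conjugation. In particular,
\[
e^{t\Delta^\perp} = \star\, e^{t\vec\Delta}\, \star, \qquad t \ge 0,
\]
with matching domains. This is the key algebraic identity.

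Next, I would apply this identity to $\star\partial f$. Using $\star\star = I$,
\[
e^{t\Delta^\perp}\star\partial f = \star\, e^{t\vec\Delta}\,(\star\star)\,\partial f = \star\, e^{t\vec\Delta}\,\partial f.
\]
By Theorem \ref{intertwining1}, $e^{t\vec\Delta}\partial f = \partial e^{t\Delta} f$ for any $f \in \dom\eng$. Substituting this in gives
\[
e^{t\Delta^\perp}\star\partial f = \star\,\partial e^{t\Delta} f,
\]
which is exactly the asserted intertwining.

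The only step requiring a little care is justifying $e^{t\Delta^\perp} = \star\, e^{t\vec\Delta}\,\star$. One way is to appeal directly to the spectral theorem: if $\vec\Delta = \int \lambda\, dE_\lambda$ is the spectral resolution of the self-adjoint operator $\vec\Delta$ on $\mathcal{H}$, then $\Delta^\perp = \int \lambda\, d(\star E_\lambda \star)$ is the spectral resolution of $\Delta^\perp$ on $L^2(X,\mu)$, and therefore $e^{t\Delta^\perp} = \int e^{t\lambda}\, d(\star E_\lambda\star) = \star\bigl(\int e^{t\lambda}\, dE_\lambda\bigr)\star = \star\, e^{t\vec\Delta}\,\star$. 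Alternatively one can verify directly from the definition $\eng^\perp(f,g) = \langle \partial^*\star f,\partial^*\star g\rangle_2 = \vec\eng(\star f,\star g)$ that the quadratic forms, and hence the associated semigroups, are unitarily conjugate. Either way the argument is routine; no new estimate is needed, so I do not expect a substantive obstacle here.
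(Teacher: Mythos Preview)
Your proof is correct and follows essentially the same approach as the paper: both apply Theorem~\ref{intertwining1} and then use that the unitary conjugation $\star e^{t\vec\Delta}\star$ equals $e^{t\Delta^\perp}$, with the result following immediately. Your justification of the conjugation identity via the spectral theorem is in fact more detailed than the paper's, which simply states that it holds ``since $\star$ is an isometry.''
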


\begin{proof}
From Theorem \ref{intertwining1}, one has
\[
\partial e^{t \Delta}=e^{t \vec{\Delta}}\partial.
\]
Thus,
\[
\star \partial e^{t \Delta}=\star e^{t \vec{\Delta}}\partial.
\]
Since $\star$ is an isometry one has
\[
\star e^{t \vec{\Delta}} \star=e^{t\Delta^\perp},
\]
and the conclusion easily follows.
\end{proof}

The following corollary is then obvious:

\begin{corollary}\label{CR1}
Let $C_1 \ge 1$. Assume that for every $f \in L^2$, we have $\mu$-almost everywhere
\[
| e^{t \Delta^\perp} f | \le C_1 e^{t \Delta} |f |, \quad 0 \le t \le 1.
\]
Then, the semigroup $e^{t \Delta}$ satisfies the Bakry-\'Emery estimate
\begin{align*}
\sqrt{\Gamma(e^{t \Delta} f)} \le C_1 e^{C_2 t} e^{t \Delta} \sqrt{\Gamma(f)}, \quad f \in \dom\eng, \quad t \ge 0, 
\end{align*}
for some $C_2 \ge 0$.
\end{corollary}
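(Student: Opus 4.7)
The plan is to imitate the argument of Theorem \ref{self_gh}, using Theorem \ref{intertwining p} in place of Theorem \ref{intertwining1} to transfer the hypothesis on $e^{t\Delta^\perp}$ into the desired gradient estimate.

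First I would fix $f \in \dom \eng$ and apply the Poincaré dual intertwining of Theorem \ref{intertwining p}, which gives
\[
\star \,\partial e^{t \Delta} f = e^{t \Delta^\perp}\!\star \partial f
\]
for every $t\ge 0$. Since $\star$ is a fiberwise isometry $L^2(X,\mu)\to \irt$, one has $|\star g|(x) = \|\star g\|_{\irt_x} = |g(x)|$ $\mu$-a.e., so applying $|\cdot|$ pointwise to the identity above yields
\[
\sqrt{\Gamma(e^{t\Delta} f)}(x) \;=\; \|\partial e^{t\Delta} f\|_{\irt_x} \;=\; |e^{t \Delta^\perp}\!\star \partial f|(x)
\]
$\mu$-almost everywhere. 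Invoking the standing hypothesis $|e^{t \Delta^\perp} h| \le C_1 e^{t\Delta}|h|$ with $h = \star \partial f$, and using once more that $|\star \partial f| = \|\partial f\|_{\irt_\cdot} = \sqrt{\Gamma(f)}$, I obtain
\[
\sqrt{\Gamma(e^{t\Delta} f)} \;\le\; C_1\, e^{t\Delta}\sqrt{\Gamma(f)}, \qquad 0\le t \le 1.
\]

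Finally, to extend the estimate to all $t\ge 0$ I would apply the $0\le t\le 1$ inequality iteratively: for $t\in[n-1,n]$ with $n\in\N$, writing $e^{t\Delta} = e^{(t-(n-1))\Delta}\,(e^{\Delta})^{n-1}$ and applying the short-time bound $n$ times (using that $P_t$ is positivity preserving, so the inequality is stable under composition), I get $\sqrt{\Gamma(e^{t\Delta}f)} \le C_1^{\,n} e^{t\Delta}\sqrt{\Gamma(f)} \le C_1 e^{C_2 t} e^{t\Delta}\sqrt{\Gamma(f)}$ with $C_2 = \ln C_1$, exactly as in the last display of the proof of Theorem \ref{self_gh}.

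There is no real obstacle here: the two nontrivial ingredients, namely the intertwining $\star\partial e^{t\Delta} = e^{t\Delta^\perp}\star\partial$ and the fiberwise isometry property of $\star$, are already established in Theorem \ref{intertwining p} and the preceding proposition, and the iteration step is verbatim that of Theorem \ref{self_gh}. The only point that deserves a moment's care is verifying that $\star \partial f$ lies in $L^2(X,\mu)$ so that the hypothesis is applicable, which is immediate because $\|\star \partial f\|_2 = \|\partial f\|_\irt = \sqrt{\eng(f,f)}<\infty$ for $f\in\dom\eng$.
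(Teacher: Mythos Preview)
Your argument is correct and is exactly the route the paper intends: the paper simply declares the corollary ``obvious'' from Theorem~\ref{intertwining p} and the fiberwise isometry of $\star$, and you have written out precisely those steps, followed by the same iteration as in Theorem~\ref{self_gh}. There is nothing to add.
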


\subsection{Harmonic forms }

A form $\omega$ in $\irt$ is called  harmonic if $\partial^* \omega=0$.  In this subsection we assume that $\eng$ has Hino index 1 and we consider the Hodge star $\star$ with respect to a harmonic form.


\begin{lemma}
Assume that $(\eng,\dom\eng)$ has Hino index 1 and that $\mu=\nu_\omega $ for some harmonic form $\omega \in \irt$. Then, for every $f,g \in \dom \eng$,
\[
\langle f, \star \partial g \rangle_2=-\langle \star \partial f, g\rangle_2.
\]
Therefore, $(\eng^\perp,\dom\eng^\perp)$ is an extension of $(\eng,\dom\eng)$.
\end{lemma}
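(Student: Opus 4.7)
The plan is to prove the identity by computing $\partial^*(\star f)$ explicitly using harmonicity of $\omega$, then deduce the claim by a simple integration-by-parts manipulation. The key fact I will exploit is the Leibniz rule $\partial(fg) = f\,\partial g + g\,\partial f$, which holds on $\diralg = C_b(X)\cap\dom\eng$ by the chain rule recorded in Section 2.1, combined with the module property $\langle a\cdot\eta,\sigma\rangle_{\irt_x} = a(x)\langle\eta,\sigma\rangle_{\irt_x}$ valid under strict locality.

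First, I would establish the following preliminary identity: for any $f \in \diralg$, $\star f = f\cdot\omega$ belongs to $\dom\partial^*$ and
\[
\partial^*(\star f) = -\star\partial f.
\]
To see this, fix $\phi \in \diralg$ so that $f\phi \in \dom\eng$, and apply the Leibniz rule to $f\phi$ paired against $\omega$:
\[
\langle\omega,\partial(f\phi)\rangle_{\irt} = \langle\omega,f\,\partial\phi\rangle_{\irt} + \langle\omega,\phi\,\partial f\rangle_{\irt}.
\]
The left-hand side equals $\langle\partial^*\omega,f\phi\rangle_2 = 0$ since $\omega$ is harmonic. Using the module identity to move scalar factors, this rewrites as $\langle f\omega,\partial\phi\rangle_{\irt} = -\langle\phi\omega,\partial f\rangle_{\irt}$. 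Since $\omega$ spans each fiber (Hino index $1$) and $\|\omega\|_{\irt_x}=1$ almost everywhere (because $\mu=\nu_\omega$), one has $\partial f = (\star\partial f)\cdot\omega$ fiberwise, and therefore
\[
\langle\phi\omega,\partial f\rangle_{\irt} = \int_X \phi(x)\,(\star\partial f)(x)\,d\mu(x) = \langle\phi,\star\partial f\rangle_2.
\]
This shows $\langle\star f,\partial\phi\rangle_{\irt} = -\langle\phi,\star\partial f\rangle_2$ for all $\phi \in \diralg$, which, by density of $\diralg$ in $\dom\eng$, gives $\star f \in\dom\partial^*$ with $\partial^*(\star f) = -\star\partial f$.

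Given this, the desired formula is immediate on $\diralg$: for $f,g\in\diralg$,
\[
\langle f,\star\partial g\rangle_2 = \langle\star f,\partial g\rangle_{\irt} = \langle\partial^*\star f,g\rangle_2 = -\langle\star\partial f,g\rangle_2.
\]
Density of $\diralg$ in $\dom\eng$ and continuity of $\partial$ and $\star$ extend this to all $f,g \in \dom\eng$. For the extension statement, I observe that the previous display together with isometry of $\star$ gives
\[
\eng^\perp(f,g) = \langle\partial^*\star f,\partial^*\star g\rangle_2 = \langle\star\partial f,\star\partial g\rangle_2 = \langle\partial f,\partial g\rangle_{\irt} = \eng(f,g)
\]
for all $f,g \in \dom\eng$, and the containment $\dom\eng\subset\dom\eng^\perp$ is exactly the membership $\star f\in\dom\partial^*$ already established.

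The only real subtlety is the density/closure step: one must check that $\star f \in \dom\partial^*$ (not just that the pairing identity holds) for every $f \in \dom\eng$, and that the map $f \mapsto -\star\partial f$ extends continuously. This follows from boundedness of $\partial$ on $\dom\eng$ equipped with the $\eng$-norm and the fact that $\partial^*$ is a closed operator, so limits of pairs $(\star f_n, -\star\partial f_n)$ with $f_n \to f$ in $\dom\eng$ stay in the graph of $\partial^*$. I do not anticipate this last verification to be difficult, as it is routine for closed densely defined operators.
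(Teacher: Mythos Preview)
Your proof is correct and follows essentially the same approach as the paper: both use the Leibniz rule $\partial(fg)=f\partial g+g\partial f$ on $\diralg$ together with $\partial^*\omega=0$ to obtain the antisymmetry, then extend by density. The only difference is organizational: you first isolate the intermediate claim $\partial^*(\star f)=-\star\partial f$ for $f\in\diralg$ and derive both the pairing identity and the extension $\dom\eng\subset\dom\eng^\perp$ from it, whereas the paper writes the pairing identity as a single chain of equalities and leaves the ``Therefore'' implicit; your version makes the extension statement more transparent.
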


\begin{proof}
If $f,g \in \mathcal{C}$,
\begin{align*}
\langle f , \star \partial g \rangle_2=\langle \star f ,  \partial g \rangle_\mathcal{H}=\langle  f \omega ,  \partial g \rangle_\mathcal{H}=\langle   \omega , f \partial g \rangle_\mathcal{H}=-\langle   \omega , g \partial f \rangle_\mathcal{H}=-\langle \star \partial f, g\rangle_2,
\end{align*}
because $\partial (fg)=f \partial g +g\partial f$ and $\partial^*\omega=0$. The identity extends to every $f,g \in \dom \eng$ by regularity of $\eng$ as follows: for $f\in\dom\eng$ we can find a sequence of $f_i$ in $\mathcal C$ with $\lim\eng(f_i-f) + \norm{f_i - f}_2 = 0$. For $g\in\dom\eng$, $\lim_{i\to\infty}\gen{f_i,\star \partial g}_2 = \gen{f,\star \partial g}$ because $f_i$ converges to $f$ in $L^2(X)$. On the other hand $\norm{\partial(f_i-f)}_\irt = \sqrt{\eng(f_i-f)} \to 0$. This implies,  $\lim_{i\to\infty } \partial f_i =\partial f$ strongly in $\irt$, and thus $\lim_{i\to \infty} \gen{\partial f_i, \star g}_\irt = \gen{\partial f, \star g}_\irt$.
\end{proof}

From the previous proposition we have $\star \partial \subset -\partial^* \star$. However, in general  it is not true that $\star \partial = -\partial^* \star$ (see the following discussion on the union of circles for an example).  In the case, where $\star \partial = -\partial^* \star$, then $\eng=\eng^\perp$ and therefore $\Delta=\Delta^\perp$, which implies from Corollary \ref{CR1} that the Bakry-\'Emery estimate is satisfied with a constant 1. In general, one can prove the Bakry-\'Emery estimate with constant 1 only on a subspace of $\dom \eng$.

\begin{theorem}\label{lkmn}
Let 
\[
\mathfrak{L}=\left\{ f \in L^2(X,\mu), \text{ for every } t \ge 0, e^{t\Delta } f =e^{t\Delta^\perp }f  \right\}.
\]
Then  $\mathfrak{L}$ is an $L^2$-closed subspace $\mathfrak{L}$ of $L^2(X,\mu)$  such that $ \star \partial ( \mathfrak{L} \cap \dom \eng ) \subset \mathfrak L$ and for every $f \in \mathfrak{L} \cap \dom \eng $ and $t \ge 0$,
\[
\sqrt{\Gamma(e^{t\Delta} f )} \le e^{t\Delta}\sqrt{ \Gamma(f)}.
\]
\end{theorem}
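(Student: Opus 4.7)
The plan is to handle the three assertions in order: closedness of $\mathfrak{L}$, invariance $\star\partial(\mathfrak{L}\cap\dom\eng)\subset\mathfrak{L}$, and the pointwise gradient bound. Closedness is essentially formal, the invariance is the technical heart, and the gradient inequality drops out from the invariance by the Markovianity of $e^{t\Delta}$.

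For closedness, observe that both $e^{t\Delta}$ and $e^{t\Delta^\perp}=\star e^{t\vec{\Delta}}\star$ are $C_0$-contraction semigroups on $L^2(X,\mu)$ (the latter because $\vec{\Delta}=-\partial\partial^*\le 0$ and $\star$ is an $L^2$-isometry). Hence $e^{t\Delta}-e^{t\Delta^\perp}$ is bounded for every $t\ge 0$, its kernel is closed, and $\mathfrak{L}=\bigcap_{t\ge 0}\ker(e^{t\Delta}-e^{t\Delta^\perp})$ is closed.

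For the invariance, fix $f\in\mathfrak{L}\cap\dom\eng$, put $h:=\star\partial f\in L^2(X,\mu)$, and test against an arbitrary $g$ in the $L^2$-dense class $\diralg=C_b(X)\cap\dom\eng$. Chaining self-adjointness of the semigroups, the skew-symmetry $\langle\phi,\star\partial\psi\rangle_2=-\langle\star\partial\phi,\psi\rangle_2$ valid on $\dom\eng$ (the lemma just above, where harmonicity of $\omega$ enters), and the intertwining $\star\partial e^{s\Delta}=e^{s\Delta^\perp}\star\partial$ of Theorem \ref{intertwining p}, I get
\[
\langle h,e^{t\Delta}g\rangle_2=-\langle f,\star\partial e^{t\Delta}g\rangle_2=-\langle f,e^{t\Delta^\perp}\star\partial g\rangle_2=-\langle e^{t\Delta^\perp}f,\star\partial g\rangle_2.
\]
Now $f\in\mathfrak{L}$ enters decisively: I replace $e^{t\Delta^\perp}f$ with $e^{t\Delta}f$, which lands back in $\dom\eng$, and run the three identities in reverse to arrive at $\langle e^{t\Delta^\perp}h,g\rangle_2$. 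Self-adjointness converts the left-hand side to $\langle e^{t\Delta}h,g\rangle_2$, and density of $\diralg$ in $L^2$ yields $e^{t\Delta}h=e^{t\Delta^\perp}h$ for every $t\ge 0$, so $h\in\mathfrak{L}$.

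For the gradient inequality, Theorem \ref{intertwining p} gives $\star\partial e^{t\Delta}f=e^{t\Delta^\perp}h$; the invariance step converts this to $e^{t\Delta}h$; and the fiberwise identity $\sqrt{\Gamma(u)}=\|\partial u\|_{\irt_x}=|\star\partial u|$ (which uses $\|\omega_x\|_{\irt_x}=1$ $\mu$-a.e.) combined with positivity preservation of $e^{t\Delta}$ yields
\[
\sqrt{\Gamma(e^{t\Delta}f)}=|e^{t\Delta}h|\le e^{t\Delta}|h|=e^{t\Delta}\sqrt{\Gamma(f)}.
\]
The main obstacle is the domain bookkeeping in the invariance step: what closes the chain is precisely that $e^{t\Delta^\perp}f=e^{t\Delta}f$ for $f\in\mathfrak{L}$, so that an element which a priori only lives in $\dom\eng^\perp$ is transported back into $\dom\eng$ where the integration-by-parts formula can be applied a second time.
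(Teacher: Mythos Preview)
Your proof is correct and follows essentially the same route as the paper. The only cosmetic difference is packaging: the paper works directly at the operator level, equating $\star\partial e^{t\Delta}f=e^{t\Delta^\perp}\star\partial f$ with a strong computation of $\star\partial e^{t\Delta^\perp}f$ via $\star\partial\subset-\partial^*\star$, $\star e^{t\Delta^\perp}=e^{t\vec\Delta}\star$, and the dual intertwining $\partial^* e^{t\vec\Delta}=e^{t\Delta}\partial^*$; you obtain the same identity weakly by testing against $g\in\diralg$ and using only Theorem~\ref{intertwining p}, the skew-symmetry lemma, and self-adjointness. The core ingredients and the final gradient estimate are identical.
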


\begin{proof}
The fact that $\mathfrak{L}$ is an $L^2$-closed subspace $\mathfrak{L}$ of $L^2(X,\mu)$  is obvious. Let now $f \in \mathfrak{L} \cap \dom \eng$. We have $ e^{t\Delta } f =e^{t\Delta^\perp }f $. Therefore $\star \partial e^{t\Delta } f =\star \partial e^{t\Delta^\perp }f $. Now, from Theorem \ref{intertwining p}, $\star \partial e^{t\Delta } f = e^{t\Delta^\perp } \star \partial f$. On the other hand, from the previous lemma $\star \partial e^{t\Delta^\perp }f =-\partial^* \star e^{t\Delta^\perp }f =-\partial^* e^{t \vec{\Delta}} \star f=- e^{t \Delta} \partial^* \star f=e^{t \Delta} \star \partial f$. We conclude $e^{t\Delta^\perp } \star \partial f=e^{t\Delta } \star \partial f$ and thus $\star \partial f \in \mathfrak L$. Finally, if $f \in \mathfrak L \cap \dom \eng$, then
\[
\star \partial e^{t\Delta } f= e^{t\Delta } \star \partial f,
\]
which immediately implies the Bakry-\'Emery estimate.
\end{proof}

We conclude the section with a detailed example that satisfies the assumptions of this section. Assume that $X$ is a union of $n$ circles connected at one point. One can represent a function $f:X \to \mathbb{R}$ as a function 
\[
f=(f_1, \cdots, f_n)
\] 
where the $f_i:[0,1] \to \mathbb{R}$ are subject to the boundary conditions
\[
f_i(1)=f_i(0)=f_j(0)=f_j(1)\quad \text{for all $i\neq j$}.
\]
One considers then the Dirichlet form
\[
\mathcal{E} (f,g)=\sum_{i=1}^n \int_0^1 f'_i (x) g_i'(x) dx
\]
with domain the $\eng$-closure of 
\[
\{ f \in C^\infty( [0,1], \mathbb{R}^n ),  f_i(1)=f_i(0)=f_j(0)=f_j(1),~\forall i\neq j \}.
\]
 For every $f \in \dom \eng$, one has
\[
\sum_{i=1}^n \int_0^1 f'_i (x) dx=0,
\]
where the derivatives are understood in the distribution sense. Therefore  the reference  measure  $dx$ is the the energy measure of a harmonic form (namely, the energy measure of the differential form $\mathbf 1$ in the isometry described in proposition \ref{prop:MGIsometry}). For every  $f \in \dom \eng$, one has
\[
\star \partial f =(f'_1,\cdots,f'_n).
\]
One deduces that $\dom \partial^* \star$ is the $\eng$-closure of 
\[
\{ f \in C^\infty( [0,1], \mathbb{R}^n ), \sum_{i=1}^n f_i(0)=\sum_{i=1}^n f_i(1) \}
\]
and that for $f \in \dom \partial^* \star $,
\[
 \partial^* \star f=-(f'_1,\cdots,f'_n),
\]
where, once again, the derivatives are understood in the distribution sense.

\

We denote as before by $\Delta$ the generator of $\mathcal{E}$ and $P_t=e^{t \Delta}$. Denote now $P_t^S$ the standard heat semigroup on $[0,1]$ with periodic boundary condition and by $P_t^D$ the Dirichlet heat semigroup (zero boundary condition) on $[0,1]$. By extension for $f=(f_1,\cdots,f_n) \in L^2([0,1],dx)^n$, we denote
\[
P^S_tf=( P^S_tf_1,\cdots, P^S_tf_n),
\]
and we adopt a similar convention for $P_t^D$. The generators of $P_t^S$ and $P_t^D$ are respectively denoted by $\Delta^S$ and $\Delta^D$ and the corresponding Dirichlet form by $\mathcal{E}^S$ and $\mathcal{E}^D$. 
If we denote
\[
\mathfrak{L}=\{ f \in L^2(X) , f_1=\cdots=f_n \},
\]
any function $f$ can uniquely be decomposed as $f=f_\mathfrak{L} +f_\mathfrak{L^\perp}$ where $f_\mathfrak{L} \in \mathfrak{L}$ and $f_{\mathfrak{L}^\perp} \in \mathfrak{L}^\perp$. We have then the following proposition:

\begin{proposition}
\
\begin{enumerate}
\item Let $f \in L^2(X,dx)$, then $f \in \dom \Delta$ if and only if $f_{\mathfrak{L}} \in \dom \Delta^S$ and  $f_{\mathfrak{L}^\perp}$ is in $\dom \Delta^D$. In that case,
\[
\Delta f= \Delta^S f_{\mathfrak{L} }+ \Delta^D f_{\mathfrak{L}^\perp}.
\]
\item Let $f \in L^2(X,dx)$, then $f \in \dom \Delta^\perp$ if and only if $f_{\mathfrak{L}} \in \dom \Delta^S$ and $f_{\mathfrak{L}^\perp} \in \dom \Delta^N$. In that case,
\[
\Delta^\perp f= \Delta^S f_{\mathfrak{L}} + \Delta^N f_{\mathfrak{L}^\perp}.
\]
\end{enumerate}
\end{proposition}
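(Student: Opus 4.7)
The plan is to characterize both $\dom \Delta$ and $\dom \Delta^\perp$ explicitly in terms of edgewise $H^2$ regularity and vertex conditions, then translate those conditions directly into the boundary conditions satisfied by the two components of the orthogonal decomposition. Throughout, set $\bar f = \tfrac{1}{n}\sum_i f_i$, so that $f_\mathfrak{L} = (\bar f, \ldots, \bar f)$ and $f_{\mathfrak{L}^\perp} = (f_1 - \bar f, \ldots, f_n - \bar f)$; the orthogonality is with respect to the $L^2(X)$ inner product $\sum_i \int_0^1 f_i g_i \, dx$.

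For part (1), a standard integration by parts in $\eng(f,g) = \sum_i \int_0^1 f_i' g_i' \, dx$ yields the boundary contribution $c \sum_i (f_i'(1) - f_i'(0))$, where $c = g_i(0) = g_i(1)$ is the common vertex value of $g$ (which can be chosen freely). Hence $f \in \dom\Delta$ if and only if each $f_i \in H^2([0,1])$, the continuity condition $f_i(0) = f_j(0) = f_i(1) = f_j(1)$ holds, and the Kirchhoff relation $\sum_i (f_i'(0) - f_i'(1)) = 0$ is satisfied, with $(\Delta f)_i = f_i''$. Continuity at the vertex forces $f_i - \bar f$ to vanish at both endpoints (so each component of $f_{\mathfrak{L}^\perp}$ satisfies Dirichlet boundary conditions) and gives $\bar f(0) = \bar f(1)$, while the Kirchhoff condition becomes exactly $\bar f'(0) = \bar f'(1)$, completing the periodic conditions for $\bar f \in \dom \Delta^S$. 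Linearity of $f \mapsto f_i''$ then yields $\Delta f = \Delta^S f_\mathfrak{L} + \Delta^D f_{\mathfrak{L}^\perp}$, and the converse follows by reading the argument backward.

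For part (2), I would use the explicit formulas $\star\partial f = (f_1',\ldots,f_n')$ and $\partial^*\star f = -(f_1',\ldots,f_n')$ established just before the proposition, so that $\eng^\perp(f,g) = \sum_i \int_0^1 f_i' g_i' \, dx$ on the strictly larger test space $\dom\eng^\perp$ consisting of the closure of $\{f : \sum_i f_i(0) = \sum_i f_i(1)\}$. Integration by parts now requires the linear functional $(a_i,b_i) \mapsto \sum_i f_i'(1) b_i - \sum_i f_i'(0) a_i$ to vanish on the codimension-one subspace $\{\sum_i a_i = \sum_i b_i\} \subset \mathbb{R}^{2n}$, which forces all $f_i'(0)$ and all $f_i'(1)$ to equal a single common value $\lambda$. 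This is the main technical point: because $\dom\eng^\perp \supsetneq \dom\eng$, the integration by parts allows more boundary variations and produces a stronger condition on the endpoint derivatives than the Kirchhoff condition of part (1).

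With this characterization, the decomposition is immediate: the condition $\sum_i f_i(0) = \sum_i f_i(1)$ gives $\bar f(0) = \bar f(1)$, and $f_i'(0) = f_i'(1) = \lambda$ simultaneously produces $\bar f'(0) = \bar f'(1) = \lambda$ (periodic, so $\bar f \in \dom \Delta^S$) and $(f_i - \bar f)'(0) = (f_i - \bar f)'(1) = 0$ (Neumann, so each component of $f_{\mathfrak{L}^\perp}$ lies in $\dom \Delta^N$). Linearity of the second derivative then yields $\Delta^\perp f = \Delta^S f_\mathfrak{L} + \Delta^N f_{\mathfrak{L}^\perp}$, and as before the converse is obtained by reversing the argument. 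The only real subtlety is the boundary-term computation underlying the stronger vertex condition in $\dom \Delta^\perp$; everything else is bookkeeping on the two orthogonal summands.
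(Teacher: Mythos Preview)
Your proof is correct, and it takes a genuinely different route from the paper's. The paper argues at the level of Dirichlet forms: it shows that the map $f\mapsto(f_{\mathfrak L},f_{\mathfrak L^\perp})$ is an isometric isomorphism $(\dom\eng,\eng)\to(\dom\eng^S,\eng^S)\oplus(\dom\eng^D,\eng^D)$, deduces Part~1 from the corresponding isomorphism of generators, and then obtains Part~2 purely by quoting the dualities $(\Delta^S)^\perp=\Delta^S$ and $(\Delta^D)^\perp=\Delta^N$ established earlier in the paper as examples. In particular the paper never writes down the vertex conditions for $\dom\Delta^\perp$.

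Your approach instead characterizes $\dom\Delta$ and $\dom\Delta^\perp$ explicitly through integration by parts against the appropriate test spaces, and then reads off the boundary conditions for the two summands directly. The key observation you isolate---that testing against the larger space $\dom\eng^\perp$ forces \emph{all} the $f_i'(0)$ and $f_i'(1)$ to coincide with a single constant $\lambda$, rather than merely the Kirchhoff relation---is exactly the anti-Kirchhoff condition, and your linear-algebra argument for it is clean. Your method is more self-contained (it does not invoke the abstract duality $(\Delta^D)^\perp=\Delta^N$) and makes the mechanism transparent; the paper's method is shorter and more conceptual, since it reduces the computation to the one-edge examples already done. Both are perfectly sound.
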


\begin{proof}
If $ f , g\in C^\infty( [0,1], \mathbb{R}^n ),  f_i(1)=f_i(0)=f_j(0)=f_j(1),~\forall i\neq j$. Then $f_{\mathfrak{L}} , g_{\mathfrak{L}} \in \dom \eng^S $, $f_{\mathfrak{L}^\perp}, g_{\mathfrak{L}^\perp} \in \dom \eng^D$ and
\[
\eng(f,g)=\eng^S (f_{\mathfrak{L}} , g_{\mathfrak{L}})+ \eng^D(f_{\mathfrak{L}^\perp}, g_{\mathfrak{L}^\perp}).
\]
Thus
\begin{align*}
\begin{array}{lll}
\dom \eng & \to & \dom \eng^S \otimes \dom \eng^D \\
f & \to & (f_{\mathfrak{L}} , f_{\mathfrak{L}^\perp})
\end{array}
\end{align*}
is seen to be a Dirichlet space isomorphism and Part 1 follows. Part 2 follows from  the fact that $(\Delta^S)^\perp = \Delta^S$ and  $(\Delta^D)^\perp =\Delta^N$. 
\end{proof}

The next corollary easily follows and illustrates Theorem \ref{lkmn}.

\begin{corollary}
\

\begin{enumerate}
\item Let $f \in L^2(X,dx)$. Then for every $t \ge 0$,
\[
P_t f = P^S_tf_{\mathfrak{L}}+P_t^Df_{\mathfrak{L}^\perp}.
\]
\item Let $f \in L^2(X,dx)$. Then for every $t \ge 0$,
\[
P^\perp _t f = P^S_tf_{\mathfrak{L}}+P_t^Nf_{\mathfrak{L}^\perp}.
\]
\end{enumerate}
As a consequence
\[
\mathfrak{L}=\left\{ f \in L^2(X,\mu), \text{ for every } t \ge 0, e^{t\Delta } f =e^{t\Delta^\perp }f  \right\}.
\]
\end{corollary}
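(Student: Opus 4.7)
The plan is to read off both heat semigroup decompositions from the Dirichlet space isomorphism established in the preceding proposition. That proposition shows that $\Delta$ leaves the orthogonal splitting $L^2(X,dx) = \mathfrak{L} \oplus \mathfrak{L}^\perp$ invariant and reduces to $\Delta^S$ on $\mathfrak{L}$ and to $\Delta^D$ on $\mathfrak{L}^\perp$; by functional calculus applied to self-adjoint direct sums, $P_t f = P_t^S f_{\mathfrak{L}} + P_t^D f_{\mathfrak{L}^\perp}$. The identical argument, justified by Part 2 of the proposition with $\Delta^N$ in place of $\Delta^D$, yields the formula for $P_t^\perp f$. So Parts 1 and 2 are almost immediate.

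For the characterization $\mathfrak{L} = \{f : P_t f = P_t^\perp f \ \forall\, t \geq 0\}$, the inclusion $\subseteq$ is free: if $f \in \mathfrak{L}$ then $f_{\mathfrak{L}^\perp} = 0$ and Parts 1--2 give $P_t f = P_t^S f = P_t^\perp f$. For the reverse inclusion, subtracting the two formulas gives $P_t^D f_{\mathfrak{L}^\perp} = P_t^N f_{\mathfrak{L}^\perp}$ for every $t \geq 0$. Since $P_t^D$ and $P_t^N$ act componentwise on tuples, this reduces to proving that for a single scalar function $g \in L^2([0,1])$, the identity $P_t^D g = P_t^N g$ for all $t \geq 0$ forces $g = 0$; applying this coordinate-by-coordinate to $f_{\mathfrak{L}^\perp}$ will finish the proof.

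I would verify the claim $P_t^D g = P_t^N g \Rightarrow g = 0$ by spectral expansion. Writing $g = \sum_{k\geq 1} a_k \sqrt{2} \sin(k\pi x)$ in the normalized Dirichlet eigenbasis and $g = b_0 + \sum_{k\geq 1} b_k \sqrt{2} \cos(k\pi x)$ in the normalized Neumann eigenbasis, the identity becomes
\[
\sum_{k\geq 1} a_k \sqrt{2}\,\sin(k\pi x)\, e^{-(k\pi)^2 t} \;=\; b_0 + \sum_{k\geq 1} b_k \sqrt{2}\,\cos(k\pi x)\, e^{-(k\pi)^2 t}
\]
as an equality in $L^2([0,1])$ for every $t \geq 0$. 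Sending $t \to \infty$ in $L^2$ forces $b_0 = 0$; successive cancellation via linear independence of the distinct exponentials $\{e^{-(k\pi)^2 t}\}_{k\geq 1}$ then yields $a_k \sin(k\pi x) = b_k \cos(k\pi x)$ in $L^2([0,1])$ for each $k \geq 1$, and pointwise linear independence of $\sin(k\pi x)$ and $\cos(k\pi x)$ on $[0,1]$ forces $a_k = b_k = 0$. Hence $g = 0$.

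The main (mild) obstacle is this last step: Parts 1 and 2 are formal consequences of the preceding proposition, but the characterization of $\mathfrak{L}$ rests on the classical fact that the Dirichlet and Neumann heat semigroups on an interval agree only on the zero function, which requires the two-tier linear independence argument above. Everything else is direct functional calculus on the orthogonal decomposition $L^2(X,dx) = \mathfrak{L} \oplus \mathfrak{L}^\perp$.
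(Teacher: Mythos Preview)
Your proof is correct. The paper itself gives no proof of this corollary beyond the remark that it ``easily follows'' from the preceding proposition and illustrates Theorem~4.9, so your argument is essentially supplying the details the authors omit: Parts~1 and~2 are indeed immediate from functional calculus on the direct-sum decomposition, and your spectral argument for the reverse inclusion (reducing to $P_t^D g = P_t^N g$ for all $t$ forces $g=0$ on $[0,1]$) is a clean way to close the gap.
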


\section{Bakry-\'Emery estimate on metric graphs}

In this section  we prove the validity of the Bakry-\'Emery estimate on metric graphs with finite number of edges and rays. The results of Section 3  may therefore be applied in that class of examples.

\subsection{Function spaces and differential one-forms on metric graphs}\label{subsec:generalMG}

In Section 4 we developed a Poincar\'e duality based on a Hodge star operator when the reference measure is an energy form $\nu_\omega$ for some $\omega \in \irt$. This requires the total measure of the space to be finite, ruling therefore out non-compact metric graphs. Our first task will therefore be to find an isomorphism  between one-forms and functions that works for any metric graph. This will be made possible by the existence of the derivative operator.

\

For a reference on the general theory of metric graphs we refer to \cite{Pos12}. We start off with notations concerning (discrete) weighted graphs. We use $\cell$ to denote a graph, which is composed of verteces $V$, (internal) edges $\edge$ and rays $\mathsf R$. For each edge $e\in\edge$ there is two endpoints $e^-$ and $e^+$ in $V$ as well as a length $r(e)>0$. Rays have one associated endpoint $e^-$ in $V$ and the length is infinite.  For $v\in V$ define the set of adjacent edges $\edge_v = \set{e\in \edge\cup \mathsf R~|~v = e^- \text{ or } v = e^{+}}$. We assume that $\edge$ and $\mathsf R$ are finite.

\

Define $\gmet$ to be the metric graph associated with $\cell$: For $e\in\edge$ let $I_e = [0,r(e)]$ and if $e\in\mathsf R$ then $I_e = [0,\infty)$. In this case $\gmet$ is the set $\sqcup_{e\in\edge\cup\mathsf R} I_e$ modulo the equivalence relation which identifies  endpoints of $I_{e_1}$ and $I_{e_2}$ if associated endpoints of $e_1$ and $e_2$ are the same vertex. Define $\Phi_e: I_e\to\gmet$ to be the projection onto the equivalence classes. For example $\Phi_{e_1}(0) = \Phi_{e_2}(r(e_2))$ if $e^-_1 = e^+_2$. We may think of $I_e$ as subsets of $\gmet$ and refer to $0 \in I_e$ as $e^-$ and $r(e) \in I_e$ as $e^+$.

\

Now, we shall define some notations concerning the function spaces on $\gmet$. Define the reference measure $\mu$ on $\gmet$ to be that which is Lebesgue measure when restricted to each $I_e$. Functions $f\in L^2(\gmet) = L^2(\gmet,\mu)$ will be denoted as vectors $f = (f_e)_{e\in \edge\cup\mathsf R}$ where $f_e \in L^2(I_e)$, i.e. $L^2(\gmet) = \oplus_e L^2(I_e)$. Other function spaces have similar vector decompositions, perhaps with boundary conditions. For example, we shall think of  continuous functions $C(\gmet)$ to be the vectors with entries in $ C(I_e)$ where, if $v\in I_{e_1}$ and $I_{e_2}$ then $f_{e_1}(v) = f_{e_2}(v)$. Define the Sobolev space $H^1_0(\gmet)$ to be  the functions $f$ such that $f_e \in H^1(I_e)$, i.e. both $f_e$ and $f_e'$ are in $L^2(I_e)$, with the boundary conditions ensuring that $f$ is continuous at verteces.

\ 

When it is well defined, we consider $f(v)$ to be the vector $(f_e(v))_{e\in \edge_v}$ of values of $f$ (or traces of $f$) at the associated endpoint of $e$. We shall need to also denote the multiplication (diagonal) operator $U_v(e) = 1$ if $v = e^-$ and $U_v(e) = -1$ if $v = e^+$ for each $v\in V$. In this way, the inward facing normal derivative of $f$ at $v\in V$ along an edge $e$ is $U_v(e)f_e'(v)$. Here and later, $f_e'(0)$ or $f'_e(r(e))$ is understood to be the trace of $f'_e$ onto the boundary of $I_e$. 

\

One defines first  a  derivative operator $d: H^1_0(\gmet) \to L^2(\gmet)$ by $(df)_e(x) = f'_e(x)$, which we will concisely denote by $df = f'$. Note that, up to a sign, $d$ depends on the orientation of the graph. However, the Dirichlet form defined by
\[
\eng(f,g) = \int f'g' \ d\mu = \gen{f',g'}
\]
nor its generator $\Delta f = -f''$ depend on this orientation. The domain of $\eng$ is $H^1_0(\gmet)$ and the domain of $\Delta$ is 
\[
\dom\Delta = \set{f\in H^1_0(\gmet)~|~ \forall ~e, f_e' \in H^1(I_e), \forall ~v\in V,~\sum_{v\in \edge_v}U_v(e)f_e'(v)=0}.
\]
These boundary conditions are called standard or Kirchhoff boundary conditions. The carr\'e du champ  associated to $\eng$ or $\Delta$ is  $\Gamma(f,g) = f'g'$ for $f,g\in H^1_0(\gmet)$.

We also define the codifferential $d^* f  := -f'$ to be the adjoint of $d^*$. Using the integration by parts formula
\[
\gen{f',g} = -\gen{f,g'} + \sum_{v\in V}\sum_{e\in E_v} U_v(e)f_e(v)g_e(v),
\]
one sees that
\[
\dom d^* = H^1_1(\gmet) := \set{f\in L^2(\gmet)~|~f_e\in H^1(I_e),~\forall~v\in V,~\sum_{e\in \edge_v} U_v(e)f_e(v) = 0}. 
\]
The following result shows that we can identify the space of one-forms in the sense of Section 2 with $L^2(\gmet)$.

\begin{prop}\label{prop:MGIsometry}
Let $\gmet$ be a metric graph with a finite number of edges and rays, and $\eng$ be the Dirichlet form defined  above with $\dom\eng = H_0^1(\gmet)$. If $\irt$ be the space of differential 1-forms, then $\irt \cong L^2(\gmet)$ via an isometry which sends $\partial f \mapsto f'$ for all $f\in \dom\eng$. Under this isometry $\partial^* = d^*$, $\dom\partial^* = H^1_{1}(\gmet)$, and $\vec\Delta = dd^*$.
\end{prop}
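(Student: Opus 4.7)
The plan is to define $\Psi \colon \irt \to L^2(\gmet)$ on simple tensors by $\Psi(f \otimes g) := g f'$ and verify the four claims in turn. First, on $\diralg \otimes \B_b(\gmet)$,
\[
\gen{f_1 \otimes g_1, f_2 \otimes g_2}_\irt = \int g_1 g_2 \, \Gamma(f_1,f_2) \, d\mu = \int g_1 g_2 f_1' f_2' \, d\mu = \gen{g_1 f_1', g_2 f_2'}_{L^2(\gmet)},
\]
so $\Psi$ preserves the inner product, factors through the kernel of the seminorm, and extends by continuity to an isometric embedding $\irt \hookrightarrow L^2(\gmet)$.

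Next I would establish surjectivity by showing that the image of $\Psi$ is dense. Given any edge $e$ with endpoints $v_1, v_2$ and any $h \in L^2(I_e)$ (compactly supported in the case of a ray), set $f_e(x) := \int_0^x h(t) \, dt$ on $I_e$, and extend $f$ to the rest of $\gmet$ as a bounded piecewise-linear function matching the boundary values at $v_1$ and $v_2$. Then $f \in \diralg$ and $\Psi(f \otimes \mathbf 1_{I_e}) = h \cdot \mathbf 1_{I_e}$. Since $L^2(\gmet) = \bigoplus_{e \in \edge \cup \mathsf R} L^2(I_e)$, the image of $\Psi$ is dense, so $\Psi$ is an onto isometry. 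Under this identification, $\partial f = f \otimes \mathbf 1 \mapsto f' = df$ is automatic.

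For the codifferential, I would test the defining relation against $\phi \in \dom \eng = H^1_0(\gmet)$: a form $\eta$ lies in $\dom \partial^*$ iff $\phi \mapsto \int \eta \phi' \, d\mu$ extends to a bounded linear functional on $L^2(\gmet)$. Integrating by parts edge by edge, and using continuity of $\phi$ at each vertex to collapse the double sum,
\[
\int \eta \phi' \, d\mu = -\sum_{e} \int_{I_e} \eta_e' \phi_e \, dx - \sum_{v \in V} \phi(v) \sum_{e \in \edge_v} U_v(e) \eta_e(v).
\]
Bounded $L^2$-extension forces both (i) $\eta_e \in H^1(I_e)$ for every $e$, and (ii) the Kirchhoff sum $\sum_{e \in \edge_v} U_v(e) \eta_e(v) = 0$ at each vertex $v$. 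This is precisely the space $H^1_1(\gmet)$, and then $\partial^* \eta = -\eta' = d^* \eta$. Consequently $\vec\Delta = -\partial \partial^*$ transports to $d d^*$ with its natural domain.

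The main obstacle is the vertex-term analysis: one must justify that a linear functional of the continuous-trace values $\phi(v)$ can be $L^2$-bounded only if it vanishes identically, which requires constructing test functions in $H^1_0(\gmet)$ with arbitrarily small $L^2$-norm but prescribed vertex values --- a routine cutoff construction on any incident edge. The isometry and surjectivity reduce to straightforward computations with simple tensors once the right primitives are chosen.
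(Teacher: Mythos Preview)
Your proof is correct and follows the same route as the paper. The paper's own argument is very terse: it records the norm identity $\|f\otimes g\|_\irt^2 = \|gf'\|_{L^2}^2$ on simple tensors, asserts that the resulting map extends to an isomorphism, and then simply observes that $\partial \leftrightarrow d$ under this identification forces $\partial^* \leftrightarrow d^*$. The integration-by-parts computation showing $\dom d^* = H^1_1(\gmet)$ is actually carried out in the text \emph{preceding} the proposition rather than inside its proof. You fill in surjectivity and the boundary-term analysis more carefully, but the strategy is the same.

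One small point to tighten in your surjectivity step: on a ray $e$ with $\int_{I_e} h \neq 0$, your primitive $f_e(x)=\int_0^x h$ tends to a nonzero constant at infinity, so $f\notin L^2(\gmet)\supset\dom\eng$ and hence $f\notin\diralg$. Restricting to compactly supported $h$ with zero mean (still dense in $L^2(I_e)$) and extending by zero, or handling $\mathbf 1_{I_e}$ separately via a piecewise-linear $f$ that returns to zero on all other rays, repairs this immediately.
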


\begin{proof}
This is an expansion of comments made in \cite[Section 5]{IRT12}, we include a quick argument for the sake of completeness. It is straightforward to see that,
\[
\norm{f\otimes g}_{\irt}^2 = \int g^2(f')^2 \ d\mu = \norm{gf'}_{L^2(\gmet)}^2
\]
and thus the function which maps $f\otimes g \to gf'$ is an isomorphism of simple tensors and thus extends to an isomorphism. Under this isomorphism, $\partial = d$ and hence $\partial^* = d^*$.
\end{proof}

In view of the previous isomorphism, we will simply denote $\vec \Delta f = dd^* f = -f''$. The domain is
\[
\dom\vec\Delta = \set{f\in H^1_1(\gmet)~|~f'\in H^1_0(\gmet)  }
\]
i.e. for each $v\in V$ $\sum_{e\in\edge_v} U_v(e)f_e(v) =0$ and for any pair of $e_1,e_2\in \edge_v$ then $f'_{e_1}(v) = f'_{e_2}(v)$. These are sometimes referred to as anti-Kirchhoff boundary conditions.

\begin{remark}

 A metric graph $\gmet$ admits a Poincar\'e duality in the sense of Section 4 if there is a function $h$  in $H^1_1(\gmet)$ with $|h|=1$ almost everywhere. i.e. $h=\pm1$ on each edge where the $\pm$ depends on the orientation of the edge. Alternatively, such a form exists, if there is an orientation such that $\sum_{e} \int_{0}^{r_e} f' \ dx  = \gen{f',h} = -\gen{f,h'}= 0$ for all $f\in H_0^1(\gmet)$. In the case that $\gmet$ admits a Poincar\'e duality, $\vec\Delta = \Delta^\perp$.
\end{remark}

\subsection{Heat Kernels and Bakry-\'Emery Estimates on Metric Graphs}

In this section, we present a formula for the kernel of the semigroups generated by $\Delta$ and $\vec \Delta$ as a sum over (combinatorial) paths. We assume, as before, that $\gmet$ is a metric graph with a finite number of edges and rays, and that $\gmet$ has no tadpoles --- that is edges $e$ such that $e^+ = e^-$. This assumption does not limit the metric spaces which the following discussion applies to: one can introduce a vertex at the midpoint of any tadpole, producing a metric graph which is isometric (as a metric space) to the original space. 

\

A combinatorial path $c$ from $x\in e_0$ to $y\in e_{n+1}$ is the $2n+1$-tuple $$(e_0,v_0,e_1,v_1,\ldots,v_n,e_{n+1}),$$ where  for $k=0,1,2,\ldots,n$, $v_k$ and $v_{k+1}$ are distinct endpoints of $e_k$.
 Without loss of generality we assume that $v_0 = e_0^+ = \Phi_{e_0}(r(e_0))$ and $v_n = e_{n}^- = \Phi_{e_n}(0)$. 
 
 \
 
 We can define two distinct notions of the length of a path $c$: the combinatorial length, which will be denoted $|c|$ and is $n+1$ (the number of verteces $c$ passes through) and the metric length (or simply length) 
\[
d_c(x,y) := \abs{r(e)-x} + \abs{y} + \sum_{k=1}^n r(e_k) .
\]
This is the length of the shortest path which follows the combinatorial path from $x$ to $y$, and hence depends on the endpoints as well as $c$.  

Using the work of \cite{FOT11,Sto10}, we observe that the natural distance
\[
\rho(x,y) = \sup\set{|f(x)-f(y)|~:~ \Gamma(f,f) = |f'|^2\leq 1}
\] 
coincides with the natural length metric on the space.

Define $C(x,y)$ to be the set of the combinatorial paths connecting $x$ to $y$, including, if $x$ and $y$ are in $e_0$, then the trivial path $(e_0)$, defining $S((e_0)) = 1$ and $d_{(e_0)}(x,y) = |x-y|$. Define the scattering amplitude associated to a combinatorial path
\begin{align}\label{eqn:PathScattering}
S(c) = \prod_{k=0}^n \paren{\frac{2}{\deg_{v_k}} -\delta_{e_k,e_{k+1}}}
\end{align}
where $\deg_v$ is the vertex degree of $v$ and $\delta_{e_k,e_{k+1}}$ is the Dirac Delta (i.e. 1 if $e_k = e_{k+1}$ and $0$ otherwise).

Letting $g_t(u):= (4\pi t)^{-1/2}e^{-u^2/4t}$, according to the formula in \cite[Corollary 3.4]{KPS07}, the heat kernel of $\Delta$ has the form

\begin{align}\label{eqn:HK}
 p_t(x,y) =  \sum_{c\in C(x,y)} S(c)g_t(d_c(x,y)).
\end{align}

\begin{prop}
The integral kernel of the semigroup associated to the anti-Kirchhoff Laplacian $\vec\Delta$ is
\[
\vec p_t (x,y) = \sum_{c\in C(x,y)} \vec S(c) g_t(d_c(x,y)),
\]
where $\vec S(c)$ is the anti-Kirchhoff scattering amplitude defined 
\begin{align}\label{eqn:AntiPathScattering}
\vec S(c) = \prod_{k=0}^n U_v(e_k)U_v(e_{k+1})\paren{\frac{2}{\deg_{v_k}} -\delta_{e_k,e_{k+1}}} = \pm S(c).
\end{align}
\end{prop}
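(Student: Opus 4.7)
My plan is to verify directly that the proposed formula defines the integral kernel of $e^{t\vec\Delta}$. Since $\vec\Delta$ is self-adjoint, its heat semigroup admits a unique integral kernel, so it suffices to check three characterizing properties of $\vec p_t$: (i) the series converges absolutely with term-by-term differentiation valid in $t$, $x$, $y$, and each summand satisfies the heat equation on edge interiors; (ii) the anti-Kirchhoff boundary conditions, $\sum_{e \in \edge_v} U_v(e)\,\vec p_t(x,v)_e = 0$ and $\partial_y \vec p_t(x,v)_{e_1} = \partial_y \vec p_t(x,v)_{e_2}$ for all $e_1,e_2 \in \edge_v$, hold at every interior vertex $v$; and (iii) the initial condition $\vec p_t(x, \cdot) \to \delta_x$ as $t \to 0^+$.

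Items (i) and (iii) follow essentially verbatim from the arguments for the Kirchhoff kernel $p_t$ used in \cite{KPS07}. Because each factor $U_v(e)U_v(e')$ in $\vec S(c)$ is $\pm 1$, one has $|\vec S(c)| = |S(c)|$. Combined with the Gaussian bound $g_t(d_c)\le C t^{-1/2}e^{-d_c^2/(5t)}$ and the exponential bound on the number of combinatorial paths of length $\le N$ (coming from the finiteness of $\edge$ and $\mathsf R$), absolute convergence follows on compact sets of $(t,x,y)$ with $t$ bounded below, justifying term-by-term differentiation. Each summand $g_t(d_c(x,y))$ satisfies $\partial_t u = \partial_x^2 u = \partial_y^2 u$ on edge interiors: $d_c$ is piecewise linear with slopes $\pm 1$, and $g_t$ is the standard 1D heat kernel. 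For (iii), every non-trivial path has $d_c \ge \min_e r(e) > 0$, so its contribution vanishes as $t \to 0^+$; only the trivial-path term $g_t(|x-y|)$ survives (present exactly when $x,y$ lie on the same edge) and concentrates to $\delta_x(y)$.

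The main step, and the principal obstacle, is (ii). This is a combinatorial identity on the scattering amplitudes $\vec S(c)$. I plan to partition $\{c \in C(x, v_{(e)})\}_{e \in \edge_v}$ by the structure of the path near $v$---specifically, by the final scattering event before reaching $y = v_{(e)}$---and then resum with the weight $U_v(e)$. For paths with $v_n = v$ (arriving directly from the vertex), the scattering factor at $v$ in $\vec S(c)$ is $U_v(e_n) U_v(e)\bigl(\tfrac{2}{\deg_v} - \delta_{e_n, e}\bigr)$, so the total weight becomes $U_v(e_n)\bigl(\tfrac{2}{\deg_v}-\delta_{e_n,e}\bigr)$; summing over $e\in \edge_v$ using $\sum_{e\in \edge_v}\bigl(\tfrac{2}{\deg_v}-\delta_{e_n,e}\bigr) = 1$ yields a contribution to be cancelled against the paths whose $v_n$ is the opposite endpoint of the edge containing $y$, whose Gaussian weight $g_t(d_c)$ differs only by the full traversal $r(e)$. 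The derivative condition in (ii) is obtained by an analogous argument after differentiating in $y$, using $\partial_y d_c = U_{v_n}(e_{n+1})$ on edge interiors. The detailed bookkeeping of signs $U_v(e)$ across scattering events at $v$, and the matching of the two types of paths approaching $v$ from each incident edge, constitutes the bulk of the work; the underlying algebraic fact is that the twisted scattering matrix $\vec T_v(e,e') = U_v(e)U_v(e')\bigl(\tfrac{2}{\deg_v}-\delta_{e,e'}\bigr)$ has an eigenvector indexed by $(U_v(e))_{e \in \edge_v}$ encoding exactly the anti-Kirchhoff condition.
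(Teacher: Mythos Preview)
Your approach is correct in principle but takes a genuinely different route from the paper. The paper's proof is a two-line invocation of \cite[Corollary 3.4]{KPS07}: that result gives the path-sum formula for the heat kernel of \emph{any} self-adjoint Laplacian on a metric graph whose vertex conditions correspond to a maximal isotropic subspace. The only thing to check is that anti-Kirchhoff conditions are of this type, and the paper observes (via \cite[Remark 5.8]{Pos09}) that maximal isotropy is equivalent to self-adjointness of the associated Laplacian, which is automatic here since $\vec\Delta = dd^*$. The formula for $\vec S(c)$ then drops out because the anti-Kirchhoff scattering matrix at $v$ is the Kirchhoff one conjugated by the diagonal sign matrix $U_v$.

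Your plan, by contrast, is a direct verification by the method of images: show the candidate kernel satisfies the heat equation edgewise, the anti-Kirchhoff vertex conditions, and the initial condition, then invoke uniqueness. This is exactly how one proves the KPS07 result in the first place, so you are effectively reproducing their argument in this special case. The strategy is sound and self-contained; the convergence and initial-condition pieces (your (i) and (iii)) are routine exactly as you say. The boundary verification (ii) is where the work lies, and your outline is on the right track but somewhat underspecified: the cancellation you describe between paths ending at $v$ directly and paths approaching from the far end of an adjacent edge needs to be made precise at $y=v$, where the parametrization of $C(x,y)$ by terminal edge requires care. The eigenvector observation about $\vec T_v$ is correct (the vector $(U_v(e))_e$ is indeed fixed by $\vec T_v$), but note that what you actually need for the first anti-Kirchhoff condition is that $(U_v(e))_e$ lies in the kernel of $I+\vec T_v$ acting on the appropriate combination of incoming and reflected contributions, not merely that it is an eigenvector; sorting this out is the ``detailed bookkeeping'' you flag. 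In short: your method works, but the paper's citation is far more economical given that KPS07 already did this bookkeeping in full generality.
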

\begin{proof}
To apply \cite[Corollary 3.4]{KPS07}, we need to verify the technical condition  that anti-Kirchhoff boundary conditions correspond to a maximal isotropic subspace in the sense of \cite{KPS07}. Following example 2.8 of \cite{KPS07}, the anti-Kirchoff vertex space at each vertex is that of Kirchhoff vertex space multiplied by the diagonal matrix $-U_v$, thus the conclusion. Alternatively, from remark 5.8 in \cite{Pos09}, the maximal isotropic condition is equivalent to the associated Laplacian being self-adjoint and we know that $\vec \Delta = dd^*$ is self-adjoint.
\end{proof}

\begin{example}[Walsh spider]
One can illustrate the previous formulas in the case of the Walsh spider. The Walsh spider with $N$ legs is the graph consisting on $N$ copies of $[0,+\infty)$ which we shall call $\set{I_j}_{j=1}^N$ identified at the respective 0. Calculating from the formula \eqref{eqn:HK} or using \cite{BPY}, one sees that the heat kernel has the form
\[
p_t(x_j,y_k) = \begin{cases}
\displaystyle \frac2N \frac{e^{-\abs{x_j+y_k}^2/4t}}{\sqrt{4\pi t}} & \text{if } j\neq k \\
 \displaystyle \frac1{\sqrt{4\pi t}}\paren{e^{-\abs{x_j - y_k}^2/4t} - \paren{1-\frac2N} e^{-\abs{x_j+y_k}^2/4t}} & \text{if }j=k.
\end{cases}
\]
where $x_i \in I_i$ and $y_k \in I_k$.  It follows that, if $ \vec p_t$ is the integral kernel of $\vec{\Delta}$, then 
\[
\vec p_t(x_j,y_k) = \begin{cases}
\displaystyle -\frac2N \frac{e^{-\abs{x_j+y_k}^2/4t}}{\sqrt{4\pi t}} & \text{if } j\neq k \\
 \displaystyle \frac1{\sqrt{4\pi t}}\paren{e^{-\abs{x_j - y_k}^2/4t} + \paren{1-\frac2N} e^{-\abs{x_j+y_k}^2/4t}} & \text{if }j=k.
\end{cases}
\]
Observe that this kernel takes values which are both positive and negative.  From this one sees that the ratio
\[
\frac{p_t(x_j,y_k)}{\abs{ p^\perp_t(x_j,y_k}} = \begin{cases}
1 & \text{if } j\neq k \\
\displaystyle  \frac{ 1- K e^{-x_jy_k/2t}}{1+K e^{-x_jy_k/2t}} & \text{if } j=k.
\end{cases}
\]
where $K=(1-2/N)$. It is easy to see that the above ratio is bounded between 1 and $(1-K)/(1+K) = 1/(N-1)$.  Integrating we get the inequality $|e^{t \Delta^\perp} f|(x) \leq (N-1) e^{t\Delta}|f|(x)$, which implies that the following Bakry-\'Emery estimate holds on the Walsh spider
\[
\sqrt{\Gamma(e^{t\Delta} f)}(x) \leq (N-1) e^{t\Delta}\sqrt{\Gamma(f)}(x).
\]
Observe that the constant $N-1$ is optimal in the previous estimate. Indeed, in the Walsh spider, the range of $d$ is dense in $L^2$, as a consequence the inequality 
\[
\sqrt{\Gamma(e^{t\Delta} f)}(x) \leq C e^{t\Delta}\sqrt{\Gamma(f)}(x),\quad f \in \dom \eng .
\]
is equivalent to the inequality 
\[
|e^{t \vec \Delta} f|(x) \leq C e^{t\Delta}|f|(x), \quad f \in L^2(X),
\]
which is equivalent to the bound $| \vec p_t (x,y)| \le C p_t(x,y)$.

\end{example}

With this example in mind, we now return to the study of general graphs.

\begin{lem}\label{estimates graph}
Assume that $\gmet$ has a finite number of edges and rays. For $T>0$, there exists a constant $C_1>0$   that depends only on $T$ and the graph $\gmet$, such that for $0<t\leq T$ and $\mu$ almost every $x,y$ 
\[
\abs{\vec p_t(x,y)} \leq C_1 g_t(\rho(x,y)),\quad\quad p_t(x,y) \leq C_1 g_t(\rho(x,y))
\]
Further, there exists a $T_0$ such that for all $0<t<T_0$, and $\mu$ almost every $x,y$
\[
p_t(x,y)\geq C_0g_t(\rho(x,y)).
\]
Here $C_0$ and $T_0$ only depend on the geometry of $\gmet$ ---  on the maximum vertex degree, the minimum edge length and the number of internal edges.
\end{lem}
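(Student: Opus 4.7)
The plan is to derive all three estimates directly from the path-sum representations in \eqref{eqn:HK} and its anti-Kirchhoff analogue, using elementary bounds on the scattering amplitudes. The starting point is that at a vertex of degree $d \ge 1$, each factor satisfies $|2/d - \delta_{e_k,e_{k+1}}| \le \max(2/d, |2/d - 1|) \le 1$, so $|S(c)|, |\vec S(c)| \le 1$ for every combinatorial path $c$.

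For the upper bounds, this reduces both statements to showing
\[
\sum_{c \in C(x,y)} g_t(d_c(x,y)) \le C_1 g_t(\rho(x,y)), \quad 0 < t \le T.
\]
Since $d_c(x,y) \ge \rho(x,y)$, the identity $g_t(s)/g_t(r) = \exp(-(s^2 - r^2)/4t)$ combined with the elementary bound $s^2 - r^2 \ge (s - r)^2$ for $s \ge r \ge 0$ yields $g_t(d_c) \le g_t(\rho)\exp(-(d_c - \rho)^2/4t)$, and it suffices to bound $\sum_c \exp(-(d_c - \rho)^2/4t)$ uniformly. Grouping combinatorial paths by the number $n$ of intermediate vertices, the number of such paths is at most $D^{n+1}$ with $D = \max_v \deg v$, and for $n$ exceeding the combinatorial length of a shortest path, the excess grows at least like $(n - |\edge|) L_{\min}$. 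A polynomial-vs.-Gaussian comparison in $n$ then gives a convergent bound, uniform in $x, y$ and $t \in (0, T]$, with $C_1$ depending only on $T$, $D$, $L_{\min}$ and $|\edge|$.

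For the lower bound, I would isolate a single combinatorial geodesic $c_*$ realizing $d_{c_*}(x,y) = \rho(x,y)$. Because a geodesic does not backtrack, $\delta_{e_k, e_{k+1}} = 0$ at each of its intermediate vertices, so $S(c_*) = \prod_k 2/\deg_{v_k} \ge (2/D)^{|V|} =: \kappa > 0$. This yields a leading contribution $\kappa\, g_t(\rho(x,y))$, and the remaining sum splits into (i) other shortest combinatorial paths, which again have positive scattering factors and only reinforce the bound, and (ii) strictly longer paths, whose contribution is controlled by the same polynomial-vs.-Gaussian estimate as above and decays to $0$ as $t \to 0$ uniformly in $x, y$. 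Choosing $T_0$ small enough that this error never exceeds $\tfrac12 \kappa\, g_t(\rho)$ yields the lower bound with $C_0 = \kappa/2$, and $T_0$ depending only on $D$, $L_{\min}$, $|\edge|$.

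The main obstacle I anticipate is that the dichotomy ``shortest vs.\ strictly longer paths'' is not uniform in $x, y$: if $x$ or $y$ lies close to a high-degree vertex of $c_*$, then a one-step reflection at that vertex produces a combinatorially distinct path with $d_c - \rho(x,y)$ arbitrarily small and a negative scattering factor $2/\deg - 1$. The argument is saved by a local cancellation: each such near-shortest bouncing path must be combined with the direct geodesic contribution at the same vertex, and the two coefficients together sum to $1 + (2/\deg - 1) = 2/\deg$, which remains bounded below by $2/D > 0$. Formalizing this grouping of near-shortest paths around $c_*$, so that the residual sum is genuinely dominated by a positive constant times $g_t(\rho)$ uniformly in $x, y$, is the technical heart of the proof.
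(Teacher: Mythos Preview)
Your upper-bound argument is correct and essentially the paper's: bound $|S(c)|,|\vec S(c)|\le 1$, factor out $g_t(\rho)$, and control $\sum_c \exp\bigl(-(d_c^2-\rho^2)/4t\bigr)$ by grouping paths according to their excess length over $\rho$. The paper splits this sum at a fixed threshold $a>0$ into a finite ``short'' part and a geometric ``long'' part (using $d_c^2-\rho^2\ge a\,r_{\min}|c|$ for long paths), whereas you group by combinatorial length and invoke a Gaussian-vs.-polynomial comparison; these are equivalent.

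For the lower bound your diagnosis of the difficulty is exactly right, but your proposed resolution differs from the paper's and is not yet a proof. The cancellation formula $1+(2/\deg-1)=2/\deg$ is only the limiting identity when $x$ sits \emph{on} the vertex; for $x$ merely \emph{near} a vertex the reflected path has a slightly different length, so the two Gaussians do not combine with constant coefficients, and one must control a difference $g_t(\rho)-|2/\deg-1|\,g_t(\rho+2\varepsilon)$ uniformly in $\varepsilon$ and $t$. Moreover the ``grouping'' must be done simultaneously at both endpoints and iterated if several near-vertices occur along $c_*$, and you have not said how to organise this so that no negative contribution is double-counted. This is fixable, but it is real work, and you correctly flag it as the heart of the matter.

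The paper avoids the cancellation bookkeeping entirely by a case split. If $x$ and $y$ lie on \emph{different} edges, any path with a negative scattering factor contains a backtrack over a full internal edge, hence has excess at least $r_{\min}$; taking $a=r_{\min}$ then makes every term in the ``short'' sum nonnegative, so $p_t\ge g_t(\rho)\bigl(S(c_0)-A_{II}\bigr)$ with $S(c_0)\ge(2/\deg_{\max})^{\mathrm{diam}/r_{\min}}$ and $A_{II}\to 0$. If $x$ and $y$ lie on the \emph{same} edge $e$, the only short paths are the direct one and the two single reflections at the endpoints of $e$; writing these three terms explicitly gives $p_t\ge g_t(|x-y|)\bigl(\tfrac13-A_{II}\bigr)$. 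This case analysis replaces your uniform cancellation argument with two concrete computations, which is why the paper's proof closes without the grouping you anticipate.
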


\begin{remark}
The absolute values around $\vec p_t$ are important because it may be negative, as is the case in the case of the Walsh spider studied in the previous example.
\end{remark}

\begin{proof}
{\bf Upper bound.} First, since $\rho(x,y) = \inf_{c\in C(x,y)}d_c(x,y)$, for a fixed $a > 0$ and any $x,y$ there is a bounded number of paths $c\in C(x,y)$ such that $d_c(x,y) \leq \rho(x,y) + a$. To see this we may assume that $x,y$ are both in finite length edges. This is because a combinatorial path to/from a point on a ray is determined by the path taken until the last time it crosses the 0 of that ray. So either $x$ and $y$ are in internal edges, or we can replace them with the endpoints of the ray they are in.

 For any $x$ in an internal edge, the number of paths starting from $x$ and of length bounded by $M>0$ is less than $(\deg_{max}+1)^{M/r_{min}}$ where $\deg_{max}$ is the maximum vertex degree and $r_{min}$ is the minimum edge length. Thus there is an upper bound independent of our choice of $x$. The claim follows because the interior of the graph is compact. Further, if we take 
\[
\diam = \sup\set{\rho(x,y) ~|~\exists ~e_1,e_2\in\edge,~x\in I_{e_1},~y\in I_{e_2}}
\]
to be the farthest apart two points on finite length edges of $\graph$ can be, then
the number of paths from $x$ to any point $y$ of length less than $\rho(x,y) +a$ is bounded by $(\deg_{max}+1)^{(\diam+a)/r_{min}}$.

Because 
\[
g_t(d_c(x,y))/g_t(\rho(x,y)) = \exp\paren{-\frac{d_c(x,y)^2 - \rho(x,y)^2}{4t}},
\]
both $\abs{\vec p_t}$ and $\abs{p_t} = p_t$ are bounded above by
\[
g_t(\rho(x,y))\sum_{c\in C(x,y)} \abs{S(c)} \exp\paren{-\frac{d_c(x,y)^2 - \rho(x,y)^2}{4t}}
\] 
Factoring out the $g_t(\rho(x,y))$, we break the sum
\[
\sum_{c\in C(x,y)} \abs{S(c)} \exp\paren{-\frac{d_c(x,y)^2 - \rho(x,y)^2}{4t}} = A_I + A_{II}
\]
 into parts $A_I$ and $A_{II}$. Here $A_I$ is the sum over $c\in C(x,y)$ with $d_c(x,y) \leq \rho(x,y) + a$, and $A_{II}$ is the sum over $c\in C(x,y)$ with $d_c(x,y) > \rho(x,y) + a$.  Then
\[
A_I \leq \sum_{c\in C(x,y), d_c(x,y) \leq \rho(x,y) + a} \abs{S(c)} \leq (\deg_{max}+1)^{\frac{\diam + a}{r_{min}}},
\]
and, using an argument similar to the proof of Lemma 3.2 in \cite{KPS07}, one sees that 
\begin{align*}
A_{II} &=\sum_{c\in C(x,y), d_c(x,y) > \rho(x,y) + a} \abs{S(c)} \exp\paren{-\frac{(d_c(x,y)+\rho(x,y))(d_c(x,y) - \rho(x,y))}{4t}}\\
& \leq \sum_{c\in C(x,y), d_c(x,y) > \rho(x,y) + a}  \exp\paren{-\frac{ar_{min} |c|}{4t}}\\
& =  \sum_{n=1}^\infty \sum_{|c| = n} \exp\paren{-\frac{ar_{min} |c|}{4t}}\\
&\leq \sum_{n=1}^\infty \abs{\mathsf E}^n \exp\paren{{-\frac{a r_{min} n}{4t}}}
\end{align*}
because $d_c(x,y) + \rho(x,y) > d_c(x,y) \geq r_{min}|c|$, $d_c(x,y) - \rho(x,y) > a$, $|S(c)| \leq 1$ and the number of paths $c\in C(x,y)$ of combinatorial length $|c| = n$ is less than $|\mathsf E|^n$. Here $|\mathsf E|$ is the number of finite length edges of $G$.

For $t$ small enough
\[
\sum_{n=1}^\infty \abs{\mathsf E}^n \exp\paren{{-\frac{a r_{min} n}{4t}}}=\frac{\abs{\mathsf E}e^{-ar_{min}/4t}}{1- \abs{\mathsf E}e^{-ar_{min}/4t}}, 
\]
and choosing $a$ large, we can show this is bounded in the interval $(0,T)$ for any $T>0$.

{\bf Lower Bound.} Note that if $c_0$ is such that $d_{c_0}(x,y) = \rho(x,y)$, then $0< S(c_0)$ because, with Kirchhoff conditions any negative terms in the product that make up $S(c_0)$ would come from a combinatorial path which has two consecutive edges which are the same, in which case, a shorter combinatorial path $c'$ could be constructed by removing this sequence of two edges.

If $x,y$ are not in the same edge, then the combinatorial path $c$ must visit 2 vertices for any $c$ with $S(c)<0$, and in this case, this implies that, using the notation from the previous paragraph that $d_{c_0}(x,y) + r_{min} \leq d_{c}(x,y)$. Thus, if $x,y$ are not on the same edge, and $d_c(x,y) - \rho(x,y) < r_{min}$ then $S(c) \geq 0$. Hence, setting the $a$ above to be $r_{min}$,
\[
p_t(x,y) \geq g_t(d_{c_0}(x,y))\paren{A_I - A_{II}}.
\]
Since there is at least one path from $x,y$ with $d_{c_0}(x,y)$,
\[
A_I \geq S(c_0) > \paren{\frac2{\deg_{max}}}^{\diam/r_{min}}
\]
and since $A_{II} \to 0$ at $t\to 0$, then we can find $T_0$ such that the lower bound holds.

If $x$ and $y$ are in the same edge $e$, and $e$ has vertices $v_-$ and $v_+$ then, choosing $a< r_{min}$ implies that the sum becomes, if $x$ and $y$ are in an internal edge
\begin{align*}
p_t(x,y) & = g_t(|x-y|) + \frac{2-d_{v_-}}{d_{v_-}} g_t(x+y) +  \frac{2-d_{v_+}}{d_{v_+}} g_t(2r_e-x-y) + \sum_{c: d_c(x,y) > \rho(x,y) + a} S(c) g_t(d_c(x,y)) \\
& \geq g_t(|x-y|) \paren{\frac13 - A_{II}}. 
\end{align*}
If $x,y$ are in  the same external edge, a slight modification above shows that $p_t(x,y) \geq g_t(|x-y|)(\frac23 -A_{II})$.
\end{proof}

We are now ready to prove the main result of the section.

\begin{thm}
Assume that $\gmet$ has a finite number of edges.

\begin{enumerate}
\item If $\gmet$ is compact, then there exist a constant $C>1$ and a constant $K>0$ such that for every $ f \in \dom \eng$ and $t \ge 0$,
\[
\sqrt{\Gamma (P_t f)} \le C e^{-Kt} P_t \sqrt{\Gamma ( f)}.
\]
\item  If $\gmet$ is not compact, then there exist a constant $C>1$ and and a constant $K \ge 0$ such that for every $ f \in \dom \eng$ and $t \ge 0$
\[
\sqrt{\Gamma (P_t f)} \le C e^{Kt} P_t \sqrt{\Gamma ( f)}.
\]
\end{enumerate}
\end{thm}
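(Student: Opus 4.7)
The plan is to reduce both assertions to a pointwise comparison between the heat kernels $p_t$ and $\vec{p}_t$ coming from Lemma \ref{estimates graph}, and then to feed this comparison into Theorem \ref{self_gh} (for short times) and Theorem \ref{BE large time} (for the large time decay in the compact case).

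First, invoke Proposition \ref{prop:MGIsometry} to identify $\irt$ with $L^2(\gmet)$. Under this identification, the fiberwise norm $\|\eta\|_{\irt_x}$ becomes the ordinary absolute value $|\eta(x)|$, and $\vec P_t = e^{t \vec \Delta}$ is the integral operator with kernel $\vec p_t(x,y)$. Lemma \ref{estimates graph} provides, for any fixed $T>0$, constants $C_1>0$ and $0<T_0\le T$ together with $C_0>0$ such that for $0<t\le T_0$ and $\mu$-almost every $x,y$,
\[
|\vec p_t(x,y)| \le C_1\, g_t(\rho(x,y)) \le \frac{C_1}{C_0}\, p_t(x,y).
\]
Setting $C:=C_1/C_0$ and integrating against an arbitrary $\eta\in\mathcal H\cong L^2(\gmet)$ yields the pointwise semigroup domination
\[
\|\vec P_t \eta\|_{\irt_x}= |\vec P_t \eta(x)| \le C\, (P_t|\eta|)(x) = C\, (P_t \|\eta\|_{\irt_\cdot})(x),\qquad 0 \le t \le T_0,
\]
which is exactly the hypothesis of Theorem \ref{self_gh} on the interval $[0,T_0]$.

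Applying the iteration argument in the proof of Theorem \ref{self_gh} (with step size $T_0$ rather than $1$), we obtain
\[
\sqrt{\Gamma(P_tf)} \le C\, e^{Kt} P_t \sqrt{\Gamma(f)},\qquad f\in\dom\eng,\ t\ge 0,
\]
for $K=(\log C)/T_0$. This already proves part (2), since for a non-compact finite graph $K\ge 0$ is all one can hope for.

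For part (1), assume $\gmet$ is compact. Then $\Delta$ has compact resolvent and hence pure point spectrum; in particular a spectral gap $\lambda_1>0$ holds and $\mathbf 1\in\dom\Delta$. Moreover, the upper bound $p_{t_0}(x,y)\le C_1 g_{t_0}(\rho(x,y))$ from Lemma \ref{estimates graph}, applied at any fixed $t_0>0$, is uniformly bounded on the compact space $\gmet\times\gmet$; and the same method combined with term-by-term differentiation of the path expansion \eqref{eqn:HK} along the edge containing $x$ yields a uniform bound on $\Gamma(p_{t_0}(\cdot,y))(x)$. Thus the hypotheses of Theorem \ref{BE large time} are satisfied and we obtain, for $t\ge t_1$,
\[
\sqrt{\Gamma(P_tf)} \le C'\, e^{-\lambda_1 t} P_t \sqrt{\Gamma(f)}.
\]
Combining this with the short-time estimate derived above on $[0,t_1]$ (and absorbing the prefactor) produces a single constant $C>1$ and $K>0$ such that
\[
\sqrt{\Gamma(P_tf)} \le C\, e^{-Kt} P_t \sqrt{\Gamma(f)},\qquad t\ge 0,
\]
with, e.g., any $0<K<\lambda_1$.

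The main obstacle is to extract the pointwise kernel comparison $|\vec p_t(x,y)|\le C\, p_t(x,y)$ from the path-sum expansions, together with the fact that the resulting constant can be taken uniform in $(x,y)$; this is exactly the content of Lemma \ref{estimates graph}, which is available to us. Once this ingredient is in hand, the rest is a straightforward bookkeeping exercise combining Theorems \ref{self_gh} and \ref{BE large time}.
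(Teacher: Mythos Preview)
Your proposal is correct and follows essentially the same approach as the paper: both deduce a pointwise kernel comparison $|\vec p_t|\le C\,p_t$ from Lemma~\ref{estimates graph}, convert it via the intertwining into a short-time Bakry--\'Emery estimate, then handle the non-compact case through the iteration in Theorem~\ref{self_gh} and the compact case through Theorem~\ref{BE large time}. You are in fact a bit more careful than the paper in two places: you work on $[0,T_0]$ rather than $[0,1]$ (the lower bound in Lemma~\ref{estimates graph} is only asserted up to some $T_0$), and you spell out the splicing of the short-time growth bound with the large-time decay from Theorem~\ref{BE large time}, which the paper leaves implicit.
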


\begin{proof}
Since  $\gmet$ has a finite number of edges, as a consequence of Lemma \ref{estimates graph}, we deduce that there exists a constant $C >1$ such that for $0 <t \le 1$,
\[
\frac{\abs{\vec p_t(x,y)}}{p_t (x,y)} \le C.
\]
From Theorem \ref{intertwining1},  for $f \in \dom \eng$,
\[
  \partial e^{t\Delta} f=e^{t\vec{\Delta}}  \partial f, \quad t \ge 0.
\]
Thus, for $0 \le t \le 1$, we have
\begin{align}\label{BE proof}
\sqrt{\Gamma (P_t f)} \le C  P_t \sqrt{\Gamma ( f)}.
\end{align}
We now discuss the two cases:
\begin{enumerate}
\item \textbf{$\graph$ is compact.} In that case  $\Delta$ has a pure point spectrum, $ 1 \in \dom \Delta$ and the Dirichlet space $(\eng , \dom \eng)$ satisfies a Poincar\'e inequality:
\[
\int_X\left( f-\int_X f d\mu\right)^2 d\mu \le \frac{1}{\lambda_1} \eng (f,f), \quad f \in \dom \in  \eng.
\]
Moreover, it is easy to check that  there exists a $M>0$ such that  for $\mu$-almost every $x,y \in X$
\[
p_{1}(x,y) \le M, \quad |   \Gamma(p_{1} (.,y))(x) | \le M.
\]
See \eqref{plok} for the bound on $ \Gamma(p_{1} (.,y))(x) $. We conclude then as a consequence of Theorem \ref{BE large time}.
\item \textbf{$\graph$ is not compact.}  One can use Theorem \ref{self_gh}.

\end{enumerate}
\end{proof}

We can give a lower bound estimate on the optimal constant in the inequality
\[
\sqrt{\Gamma (P_t f)} \le C e^{Kt} P_t \sqrt{\Gamma ( f)}.
\]

\begin{thm}\label{thm:LowerBoundOnConstant}
Assume that $\graph$ has a finite number of edges. Let $\tau >0$. The optimal constant $C$ in the inequality
\[
\sqrt{\Gamma (P_t f)} \le C  P_t \sqrt{\Gamma ( f)}, \quad f \in \dom \eng, 0 \le t \le \tau
\]
satisfies
\[
C \ge  \max (\deg v -1)
\]
where the maximum is taken over the set of vertices of $\graph$. 
\end{thm}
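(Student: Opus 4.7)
The strategy is to localize near a vertex of maximum degree and reduce the problem to the Walsh spider heat kernel calculation from the preceding example. Fix a vertex $v_0 \in V$ realizing $\deg v_0 = N := \max_v \deg v$, and fix an incident edge $e$ parameterized so that $v_0 = \Phi_e(0)$. Let $r_0 > 0$ be strictly smaller than half of $\min_{e' \in \edge_{v_0}} r(e')$, so that every combinatorial path from $\Phi_e([0, r_0])$ back to itself which visits any vertex other than $v_0$ has metric length at least $2 r_0$.

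The first step is to expand the two heat kernels at $x = \Phi_e(\eta)$ for $0 < \eta \ll r_0$ as $t \to 0$. Split the path-sum formula \eqref{eqn:HK} (and its anti-Kirchhoff analogue) according to whether a combinatorial path from $x$ to $x$ visits only $v_0$ or also some other vertex. The local piece consists of the trivial path (contributing $g_t(0)$ to both kernels) and the single path $(e, v_0, e)$, which contributes $-(1 - 2/N)\, g_t(2\eta)$ to $p_t$ and $+(1 - 2/N)\, g_t(2\eta)$ to $\vec{p}_t$, matching the $j=k$ case of the Walsh spider kernels above. All remaining paths have metric length at least $2 r_0 - 2\eta$, and by the argument used in the proof of Lemma \ref{estimates graph} contribute a total error bounded by a constant times $e^{-c/t}$ for some $c > 0$ depending only on $r_0$ and the graph. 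Hence
\[
\frac{|\vec{p}_t(x, x)|}{p_t(x, x)} \;=\; \frac{1 + (1 - 2/N)\, e^{-\eta^2/t} + O(e^{-c/t})}{1 - (1 - 2/N)\, e^{-\eta^2/t} + O(e^{-c/t})} \;\longrightarrow\; N - 1
\]
whenever $\eta/\sqrt{t} \to 0$ and $t \to 0$.

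Next I convert this pointwise ratio into a lower bound on the optimal constant $C$ by constructing test functions. Fix $R \in (r_0, r(e))$ and $R' \in (R, r(e))$. For small $\eta, \delta > 0$ define $f_{\eta, \delta} \in H^1_0(\graph) = \dom \eng$ to be the piecewise linear ``trapezoidal'' function supported on $\Phi_e([\eta, R'])$ with derivative
\[
f'_{\eta, \delta}(z) \;=\; \tfrac{1}{\delta}\, \mathbf{1}_{[\eta, \eta + \delta]}(z) \;-\; \tfrac{1}{R' - R}\, \mathbf{1}_{[R, R']}(z) \quad \text{on } I_e,
\]
extended by $0$ on all other edges; continuity at both endpoints of $e$ is automatic. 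By Theorem \ref{intertwining1} together with the isomorphism $\irt \cong L^2(\gmet)$ from Proposition \ref{prop:MGIsometry}, at $x = \Phi_e(\eta)$ we have
\[
\sqrt{\Gamma(P_t f_{\eta, \delta})}(x) = \Bigl| \int \vec{p}_t(x, z)\, f'_{\eta, \delta}(z)\, d\mu(z) \Bigr|, \qquad
P_t\sqrt{\Gamma(f_{\eta, \delta})}(x) = \int p_t(x, z)\, |f'_{\eta, \delta}(z)|\, d\mu(z).
\]
Letting $\delta \to 0$, the ``bump'' part of $f'_{\eta, \delta}$ recovers the pointwise values $\vec{p}_t(x, x)$ and $p_t(x, x)$ respectively, while the ``tail'' part on $[R, R']$ contributes only $O(e^{-(R-\eta)^2/(4t)})$ by the Gaussian upper bound in Lemma \ref{estimates graph}.

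Given $\epsilon > 0$, I first choose $t \in (0, \tau]$ and $\eta$ with $\eta^2/t$ small enough that $|\vec{p}_t(x, x)|/p_t(x, x) > N - 1 - \epsilon/2$, and then $\delta$ so small that the residual approximation error lies below $\epsilon/2$. The resulting $f = f_{\eta, \delta}$ then witnesses $\sqrt{\Gamma(P_t f)}(x) > (N - 1 - \epsilon)\, P_t \sqrt{\Gamma(f)}(x)$, so $C \geq N - 1 - \epsilon$; letting $\epsilon \to 0$ gives the claim. The principal technical obstacle is the first step --- cleanly separating the leading local Walsh-spider contribution to $p_t$ and $\vec{p}_t$ from the exponentially small remainder coming from paths reaching other vertices --- which is where both the finiteness of $\edge$ and the existence of the positive lower scale $r_0$ on the edges incident to $v_0$ are used.
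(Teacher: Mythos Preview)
Your argument is correct and gives a genuinely different proof from the paper's. The paper argues by a \emph{blow-up/scaling} trick: it dilates the graph by a factor $c$ around the chosen vertex, transplants Walsh-spider test functions onto the dilated graph, and uses that as $c\to\infty$ the rescaled semigroup converges to the spider semigroup; the Bakry--\'Emery inequality on $\gmet$ then passes to the limit and forces $C\ge N-1$ via the already-computed optimal spider constant. Your route is more concrete: you stay on the original graph, isolate the two leading terms of the path-sum expansions of $p_t(x,x)$ and $\vec p_t(x,x)$ for $x$ at parameter $\eta$ near $v_0$, recognize them as exactly the $j=k$ Walsh-spider expressions, control the remaining paths as $O(e^{-c/t})$, and then manufacture trapezoidal $f_{\eta,\delta}$ whose derivatives are approximate $\delta$-functions so that the semigroup ratio at $x$ recovers $|\vec p_t(x,x)|/p_t(x,x)$. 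Each approach has its advantage: the paper's is cleaner once one accepts the (only sketched) convergence $P_{t/c^2}(\tilde f\circ\delta_c)\to P_t^X f$; yours avoids that limit entirely and stays within the explicit kernel formulas already developed in the section.

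One small point to tighten: your tail estimate ``$O(e^{-(R-\eta)^2/(4t)})$ by the Gaussian upper bound'' implicitly uses $\rho(x,z)\ge R-\eta$ for $z\in\Phi_e([R,R'])$, which can fail if the graph has a short alternate route from $v_0$ to the far end of $e$. This is harmless: all you actually need is that $\rho(x,z)$ is bounded below by some fixed $c>0$ uniformly for $\eta$ small and $\zeta\in[R,R']$, which follows immediately from continuity of $\rho$ and compactness of $[0,r_0/2]\times[R,R']$ (or simply take $R'<r_0$ so the direct segment is automatically geodesic). The resulting $O(t^{-1/2}e^{-c^2/(4t)})$ is still negligible against $p_t(x,x)\sim (2/N)(4\pi t)^{-1/2}$, and the rest of your limit argument goes through unchanged.
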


\begin{proof}
The idea is  to use a local comparison to the Walsh spider around vertexes and a scaling argument. Let $v$ be a vertex in $\graph$. For $c>0$, we denote by $\graph^c$ the metric graph obtained from $\graph$ by multiplying all distances by $c$. Denote by $\delta_c : \graph \to \graph^c$ the dilation that fixes $v$. Let now $X$ be the Walsh spider with $N$ legs where $N=\deg v$. A function $f=(f_1,\cdots,f_N) \in L^2(X)$ defines a function $\tilde f$ on the graph $\graph^c$  by identifying $v$ with the center of the Walsh spider, numbering the edges adjacent to $v$ and defining $\tilde{f} (x_i)=f_i ( d(v,x_i))$ when $x_i$ is in the edge numbered $i$ and $\tilde{f}=0$ on edges which are not adjacent to $v$. When $c \to +\infty$, one has 
\[
P_{t /c^2} ( \tilde f \circ \delta_c)( \delta_c^{-1} x_i) \to (P_t^X f)(x^*_i),
\]
where $x^*_i \in X$ is the point on the leg $i$ such that $x^*_i=d(v,x_i)$. Rescaling then the inequality
\[
\sqrt{\Gamma (P_t \tilde f)} \le C  P_t \sqrt{\Gamma ( \tilde f)}, \quad 0 \le t \le \tau
\]
and taking the limit when $c \to +\infty$ yields
\[
\sqrt{\Gamma^X (P^X_t  f)} \le C  P^X_t \sqrt{\Gamma^X (  f)}, \quad t \ge 0.
\]
Since it is true for every $f$, one must have $ C \ge \deg v -1$.
\end{proof}

\subsection{Local Riesz transform on non-compact metric graphs}

For this subsection we assume that $\gmet$ is a non-compact metric graph with a finite number of edges and rays. We prove the following theorem. We wish to use results from \cite{ACDH04}, so we first need to establish that the current setting matches that in the article. In particular, we follow the checklist indicated on page 922 in the local form. For all $t\geq 0$,
\[
\frac{t}2 \leq \mu(B_t(x)) \leq Ct
\] 
where $C$ is bounded by the number of edges. This is stronger than volume doubling.  Doubling is important in the proofs, because it allows us to use the Hardy--Littlewood maximal operators on $\graph$ as indicated in \cite{Hei01}. Also, the lower bound above does not hold for all times if $\graph$ is compact. 

 It is established in \cite{Hae11} that these metric graphs satisfy Gaussian Heat Kernel estimates and Poincar\'e inequality (alternatively, earlier in this section we established local upper Gaussian estimates, which are sufficient for our situation). It is also well established that $P_t$ is conservative, i.e. $\int p_t(x,y) \ d\mu(x) = 1$. 
 Further, from the standard theory of Dirichlet forms $\norm{(-\Delta)^{1/2} f}^2 = \eng(f) = \norm{df}^2$, and thus the Riesz transform is $L^2$ bounded.
The Laplacian operator is elliptic, by virtue of the fact that it looks like the 1-dimensional Laplacian almost everywhere.

\begin{theorem}
There is $\alpha>0$ such that for all $a \ge \alpha $, the local Riesz transform $d(-\Delta + a)^{-1/2}$ is bounded in $L^p$ for all $p$ with $1<p<\infty$.
\end{theorem}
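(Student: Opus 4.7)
The plan is to verify the hypotheses of the local Riesz transform theorem from \cite{ACDH04} for the shifted operator $-\Delta + a$ with $a$ large. The preceding discussion already checks all the structural hypotheses---local doubling via $t/2 \le \mu(B_t(x)) \le Ct$, local Gaussian upper bounds from Lemma \ref{estimates graph}, a Poincaré inequality, conservativeness, $L^2$ boundedness of the Riesz transform, and ellipticity---except for the crucial gradient estimate controlling $\sqrt{\Gamma(P_t f)}$ by $P_t \sqrt{\Gamma(f)}$ uniformly in $t$.

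Such a uniform-in-$t$ gradient estimate is precisely what fails for $\Delta$ itself: Theorem \ref{thm:LowerBoundOnConstant} shows that the best one has is $\sqrt{\Gamma(P_t f)} \le C e^{Kt} P_t \sqrt{\Gamma(f)}$ for some $K>0$. However, the uniform estimate is restored after shifting. Setting $\alpha := K$ and $\tilde P_t := e^{-at} P_t$ for $a \ge \alpha$, one immediately has
\[
\sqrt{\Gamma(\tilde P_t f)} \;=\; e^{-at}\sqrt{\Gamma(P_t f)} \;\le\; C e^{(K-a)t}\, \tilde P_t \sqrt{\Gamma(f)} \;\le\; C\, \tilde P_t \sqrt{\Gamma(f)},
\]
which is a Bakry-\'Emery estimate with $C_2=0$ for the semigroup $\tilde P_t$ generated by $-(-\Delta + a)$. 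Plugging this into the machinery of \cite{ACDH04} produces $L^p$ boundedness of $d(-\Delta + a)^{-1/2}$ for every $p \in (2,\infty)$. For $p \in (1,2)$ one concludes by duality from the adjoint identity $(d(-\Delta + a)^{-1/2})^* = (-\Delta + a)^{-1/2} d^*$ combined with the $L^2$ identity $\|(-\Delta+a)^{1/2}f\|_2^2 = \eng(f,f) + a\|f\|_2^2$.

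The main obstacle is of a technical, bookkeeping nature: the hypotheses of \cite{ACDH04} are stated with global doubling and global Gaussian estimates, whereas on a non-compact metric graph with finitely many edges and rays we only have local versions. The shift by $a\geq \alpha$ is precisely the device that localizes the problem, since the integrand in the subordination formula
\[
d(-\Delta + a)^{-1/2} = \frac{1}{\sqrt\pi}\int_0^\infty \frac{d\,\tilde P_t}{\sqrt t}\,dt
\]
is exponentially damped in $t$, so only the small-time (local) behavior of $p_t$ and $\vec p_t$ enters the analysis. Verifying that the local formulation of \cite{ACDH04} applies verbatim in the present setting---and extracting an explicit admissible threshold $\alpha$ in terms of $K$ and the combinatorial data of $\graph$---is the step requiring the most care; the analytic core is the shifted gradient estimate above.
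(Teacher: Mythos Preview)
Your approach differs substantially from the paper's. The paper does not invoke the Bakry--\'Emery estimate at all here. Instead it verifies directly the pointwise heat-kernel gradient condition $(G_{\mathrm{loc}})$ of \cite[Theorem~1.8]{ACDH04},
\[
|d_x p_t(x,y)| \;\le\; \frac{Ce^{\beta t}}{\sqrt{t}\,\mu(B_{\sqrt t}(x))},
\]
by differentiating the explicit path-sum representation $p_t(x,y)=\sum_{c} S(c)\,g_t(d_c(x,y))$ term by term and bounding the resulting series $\sum_c |S(c)|\,\frac{d_c(x,y)}{2t}\,g_t(d_c(x,y))$ with the same combinatorial estimates as in Lemma~\ref{estimates graph}. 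Once $(G_{\mathrm{loc}})$ holds with the exponential factor $e^{\beta t}$, \cite[Theorem~1.8]{ACDH04} delivers the local Riesz transform bound for all $1<p<\infty$ at once; the threshold $\alpha$ arises from $\beta$.

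Your route via the shifted Bakry--\'Emery inequality is a reasonable alternative strategy, but as written it has a gap: \cite[Theorem~1.8]{ACDH04} takes as hypothesis the pointwise kernel gradient bound $(G_{\mathrm{loc}})$, not a semigroup domination $\sqrt{\Gamma(\tilde P_t f)}\le C\,\tilde P_t\sqrt{\Gamma(f)}$. To make your approach go through you would still need to convert the latter into the former, e.g.\ by first deducing $\|\sqrt{\Gamma(\tilde P_t f)}\|_\infty\le C t^{-1/2}\|f\|_\infty$ (as in the proof of Theorem~\ref{BVX}) and then applying this to $f=p_s(\cdot,y)$ together with the local Gaussian upper bound. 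That extra step is genuine content, so calling the shifted gradient estimate ``the analytic core'' overstates what has been checked.

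Separately, your duality argument for $p\in(1,2)$ is not correct: the adjoint of $d(-\Delta+a)^{-1/2}$ is $(-\Delta+a)^{-1/2}d^*$, the \emph{reverse} Riesz transform, so boundedness of the adjoint on $L^{p'}$ with $p'>2$ does not yield boundedness of the forward transform on $L^p$ with $p<2$. In \cite{ACDH04} the range $1<p\le 2$ comes instead from Calder\'on--Zygmund arguments that use only the local Gaussian upper bounds, which are already in hand.
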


\begin{proof}

The proof leverages \cite[Theorem 1.8]{ACDH04} which states that, for a metric measure space with local volume doubling (in our case, implied because there are a finite number of edges) and local upper estimates on the diagonal of the heat kernel (in our case, implied by lemma \ref{estimates graph}), if there is $\beta>0$ such that 
\begin{align}\label{plok}
|d_x p_t(x,y)| \leq \frac{C e^{\beta t}}{\sqrt{t}\mu(B_{\sqrt{t}}(x))} 
\end{align}
 then the local Riesz transform is bounded. This heat kernel gradient estimate is the condition referred to as $G_{loc}$ in \cite{ACDH04}.

Calculating from above
\begin{align*}
\abs{d_xp_t(x,y)} & = \abs{ \sum_{c\in C(x,y)}  S(c)\frac{\pm d_c(x,y)}{2t}g_t(d_c(x,y))}\\
            & \leq \frac{1}{t \sqrt{4\pi}}\sum_{n=0}^\infty |\edge|^n \frac{nr_{min}}{t^{1/2}} e^{-r^2_{min} n^2/(4t)} \\ 
            & \leq \frac{1}{t\sqrt{4\pi}}\sum_{n=0}^\infty |\edge|^n e^{-r^2_{min} n^2/8t}.
\end{align*}
Where $r_{min}$ is the minimum length of an edge, the second inequality is by the same argument from Lemma 3.2 in \cite{KPS07}(as we used before), and the last inequality is because $xe^{-x^2} \leq e^{-x^2/2}$. Letting $L = \log(|\edge|)$,
\[
|\edge|^n e^{-n^2r_{min}^2/8t} = e^{Ln-n^2r_{min}^2/8t} \leq e^{-2Ln + 8L r_{min}^{-2} t}
\]
by taking the Taylor expansion of $xL - \frac{x^2r_{min}^2}{8t}$ around $x = 8tLr^{-2}_{min}$.

The above sum is thus bounded, and we get
\[
|d_xp_t(x,y)| \leq \frac{1}{t\sqrt{4\pi}}\frac{e^{8Lr_{min}^{-2} t}}{1-e^{-L}} \leq \frac{e^{8Lr_{min}^{-2}t}}{\sqrt{t}V(x,\sqrt{t})} \frac{1}{\sqrt{\pi}(1+|\edge|)}.
\]

\end{proof}

\subsection{Invalidity of Ricci Curvature lower bounds}

In this section we point out that no metric graph with standard boundary conditions and a vertex with degree more than two can satisfy the Ricci Curvature lower bounds of Sturm--Lott--Villani, which shall be denoted $CD(K,\infty)$ for any $K$. This is obviously not surprising since, from recent works (see  \cite{AGS14} and \cite{AGS15}), under suitable assumptions a generalized Ricci curvature lower bound is actually equivalent to a classical Bakry-\'Emery estimate:
\[
\Gamma (e^{t \Delta} f) \le e^{2Kt} e^{t \Delta} \Gamma(f).
\]
Let the set
\[
[A,B]_t = \set{z\in A ~|~ \exists x\in A,~y\in B \text{ such that } d(x,z) =  t d(x,y) \text{ and } d(z,y) = (1-t) d(x,y)}
\]
for $t\in [0,1]$ is the set of points which are convex combinations of $A$ and $B$ in that the lie on a geodesic connecting a point $x$ in $A$ to a point $y$ in $B$ at the portion $t$ along the curve. The idea is to prove the invalidity of the Brunn-Minkowski inequality.

Let $W$ denote the Wasserstein distance function on probability measures on a geodesic metric measure space $(X,d, \mu)$. We shall need no properties of the Wasserstein distance other than the fact that it is a metric on probability measures on a metric space, and hence is positive for two different measures.

The Brunn-Minskowski inequality refers to the following convexity condition
\[
\log(\mu([A,B]_t)) \geq t\log(\mu(A)) + (1-t)\log(\mu(B)) + \frac12 K t (1-t) W\paren{\frac{1_A}{\mu(A)} \mu,\frac{1_B}{\mu(B)}\mu}^2.
\]
It is proven in \cite[Proposition 2,1]{St2} that if a metric measures space $(X,d,\mu)$ which satisfies $CD(K,\infty)$, then for all sets $A$, $B$ and times $t\in[0,1]$, the above inequality holds. Showing that this inequality doesn't hold was used in \cite[Section 8.2]{Kaj13} to prove that $CD(K,\infty)$ does not hold for any $K$ on the harmonic Sierpinski gasket.

The intuitive reasoning why this inequality does not hold on metric graphs is that at each vertex with degree at least 3,  geodesics branch off from one another.

\begin{thm}\label{thm:BrunnMinkowskiFail}
Let $\graph$ be a metric graph with standard boundary conditions and $d$ is the intrinsic (geodesic) distance function on $\graph$, and let $\graph$ has at least one vertex with degree greater than 2. There are sets $A$ and $B$ in $\graph$ for which the Brunn--Minkowski inequality does not hold. Hence it is not possible for $\graph$ to satisfy $CD(K,\infty)$
for any $K$.
\end{thm}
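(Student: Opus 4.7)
The plan is to construct, for each $K \in \mathbb R$, sets $A, B \subset \graph$ of positive finite measure that violate the Brunn--Minkowski inequality recalled just above the statement; since that inequality is a consequence of $CD(K,\infty)$ by \cite[Proposition 2.1]{St2}, this will preclude $CD(K,\infty)$ for every $K$. The geometric mechanism is that when $N \ge 3$ edges meet at a vertex $v$, the $\tfrac{1}{2}$-midpoints of geodesics going from a small interval on one edge $e_1$ to a small $h$-ball around $v$ all collapse back onto $e_1$, forcing $\mu([A,B]_{1/2})$ to be strictly smaller than $\sqrt{\mu(A)\mu(B)}$.

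Pick a vertex $v$ with $\deg v = N \ge 3$ and incident edges $e_1,\dots,e_N$. Choose $R > 0$ smaller than every relevant geometric length scale at $v$---e.g.\ smaller than the minimum length of an incident edge and than half the shortest loop through $v$---so that the ball $B_{2R}(v)$ is isometric to an abstract $N$-legged star of radius $2R$; this is possible because the graph has only finitely many edges. For $0 < h \ll R$, set
\[
A = \{z \in e_1 : d(z,v) \in [R-h, R+h]\}, \qquad B = \{z \in \graph : d(z,v) \le h\},
\]
so that $\mu(A) = 2h$ and $\mu(B) = Nh$. For any $x \in A$ at distance $R+u$ from $v$ (with $u \in [-h,h]$) and any $y \in B \cap e_j$ at distance $s \in [0,h]$ from $v$, the unique shortest geodesic from $x$ to $y$ has its midpoint on $e_1$ at distance $(R+u+s)/2$ from $v$ when $j = 1$, and at distance $(R+u-s)/2$ from $v$ when $j \ne 1$; since $R > 2h$, no midpoint crosses $v$. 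Letting $(u,s,j)$ vary over the admissible ranges traces out exactly the interval on $e_1$ from $(R-2h)/2$ to $(R+2h)/2$, so $\mu([A,B]_{1/2}) = 2h$.

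The critical ratio is
\[
\frac{\mu([A,B]_{1/2})}{\sqrt{\mu(A)\mu(B)}} \;=\; \frac{2h}{\sqrt{2N}\,h} \;=\; \sqrt{\tfrac{2}{N}} \;<\; 1,
\]
since $N \ge 3$. Because $d(x,y) \in [R - 2h,\, R + 2h]$ for all $x \in A,\, y \in B$, the Wasserstein distance between the normalized uniform measures $\mu_A,\mu_B$ satisfies $W(\mu_A,\mu_B) = R + O(h)$ as $h \to 0$. Brunn--Minkowski at $t = \tfrac{1}{2}$ would require $\sqrt{2/N} \ge e^{K W^2 / 8}$, equivalently $K W^2 \le 4\ln(2/N) < 0$. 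For $K \ge 0$ this is impossible for every $R > 0$; for $K < 0$ it fails as soon as $R^2 < 4\ln(N/2)/|K|$, which we arrange by choosing $R$ small enough (then $h \ll R$). Either way we obtain the required violation.

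The only point that needs real care in the full write-up is the uniqueness of the minimizing geodesic from $x \in A$ to $y \in B$ and the fact that it does not leave the star neighbourhood of $v$; both are controlled by taking $R$ below the first-return length at $v$, a strictly positive geometric constant because the graph has finitely many edges. The Wasserstein asymptotic $W(\mu_A,\mu_B) = R + O(h)$ then follows from the two-sided bound on $d(x,y)$ above by comparing the infimum with the cost of, say, the product coupling.
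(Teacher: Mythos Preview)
Your argument is correct and rests on the same geometric mechanism as the paper's proof: at a vertex of degree $N\ge 3$, the $t$-midpoints of geodesics from a small interval on one edge to a set spread over several edges all collapse back onto that one edge, so $\mu([A,B]_t)$ is too small for Brunn--Minkowski.

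The specific choices differ. The paper takes $A=(a_1,a_2)\subset E_0$, lets $B$ be two copies of $(b_1,b_2)$ on $E_1$ and $E_2$, and works with $t$ near an endpoint so that $[A,B]_t$ is an interval of the same length $\ell$ as $A$ while $\mu(B)=2\ell$; the excess $(1-t)\ln 2$ on the right-hand side then forces failure. You instead fix $t=\tfrac12$, take $B$ to be the full $h$-ball around $v$ (so $\mu(B)=Nh$), and obtain $\mu([A,B]_{1/2})=\mu(A)=2h$, giving the clean ratio $\sqrt{2/N}<1$. Your version has two advantages: it uses all $N$ incident edges, so the defect improves with $N$, and it handles $K<0$ explicitly by bounding $W=R+O(h)$ and then shrinking $R$; the paper leaves this scaling step implicit. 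Conversely, the paper's choice of $B$ avoids the point $v$ and works with arbitrary $t$, which makes the midpoint computation marginally simpler. Either construction suffices.
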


\begin{proof}
We shall prove the inequality is not satisfied for the Walsh spider with three legs, $E_i = [0,\infty)$ for $i=0,1,2$. This can be generalized to any metric graph by considering a small neighborhood of a vertex with degree at least $3$.
Let $A  = (a_1,a_2) \subset E_0$ with $a_2- a_1 = \ell$. Let $B$ consist of two intervals $(b_1,b_2)$ contained in $E_1$ and $E_2$ with $b_2-b_1 = \ell$. Then, for $t$ close enough to 1, 
\[
[A,B]_t = (ta_1 - (1-t)b_2, ta_2 -(1-t)b_1) \subset E_0
\]
and hence
\[
\mu([A,B]) = t(a_2-a_1) + (1-t)(b_2 - b_1) =  \ell.
\]
On the other hand
\[
t\log(\mu(A)) + (1-t)\log(\mu(B)) = \log(\ell) + (1-t)\ln(2)\geq \log(\ell) = \log(\mu([A,B]_t)).
\]
Thus it is impossible to satisfy the Brunn--Minkowski inequality.
\end{proof}

%
%
\bibliography{Fractals}{}
\bibliographystyle{amsalpha}
%
\end{document}